\numberwithin{equation}{section}
\newtheorem{theorem}{Theorem}[section]
\newtheorem{definition}[theorem]{Definition}
\newtheorem{lemma}[theorem]{Lemma}
\newtheorem{corollary}[theorem]{Corollary}
\newtheorem{proposition}[theorem]{Proposition}
\newtheorem{problem}[theorem]{Problem}
\newcommand{\ee}{\varepsilon}
\newcommand{\R}{\mathbb{R}}
\newcommand{\Rn}{\mathbb{R}^n}
\def\cK{\mathcal{K}}
\def\cKn{\mathcal{K}^n}
\def\cKo{\mathcal{K}_o^n}
\def\cKon{\mathcal{K}_{(o)}^n}
\def\sphere{S^{n-1}}
\def\ball{B^n}
\def\dveV{\widetilde{V}_G}
\def\N{\mathbb{N}}
\def\into{\mathrm{int}}
\def\cH{\mathcal{H}^{n-1}}
\def\dveV{\widetilde{V}_G}
\def\cG{\mathcal{G}}
\def\hG{\widehat{V}_{G}}
\def\cBt{\widetilde{\mathcal{B}}}
\def\cBh{\widehat{\mathcal{B}}}
\def\bC{\mathbf{C}}
\def\cI{\mathcal{I}}
\def\cD{\mathcal{D}}
\def\vG{V_G}
\def\vGb{\overline{V}_G}
\def\cB{\mathcal{B}}
\def\cBb{\overline{\mathcal{B}}}
\def\cA{\mathcal{A}}
\def\bp{\begin{proposition}}
\def\ep{\end{proposition}}
\def\be{\begin{equation}}
\def\ee{\end{equation}}
\def\bd{\begin{definition}}
\def\ed{\end{definition}}
\title{The general dual-polar Orlicz-Minkowski problem  \footnote{Keywords: Dual Minkowski problem, general dual Orlicz-Minkowski problem, general dual volume, Petty bodies, polar Orlicz-Minkowski problem.}}
 \author{Sudan Xing, Deping Ye and Baocheng Zhu}
\begin{document}
\date{}
\maketitle

\begin{abstract} This paper  gives a systematic study to the general dual-polar Orlicz-Minkowski problem (e.g., Problem \ref{general-dual-polar}). This problem  involves the general dual  volume $\dveV(\cdot)$ recently proposed in \cite{GHWXY, GHXY} in order to study the  general dual Orlicz-Minkowski problem. As $\dveV(\cdot)$ extends the volume and the $q$th dual volume,  the general dual-polar Orlicz-Minkowski problem is ``polar" to the  recently initiated general dual Orlicz-Minkowski problem in \cite{GHWXY, GHXY} and ``dual" to the newly proposed polar Orlicz-Minkowski problem in \cite{LuoYeZhu}. The existence, continuity and uniqueness, if applicable, for the solutions to the general dual-polar Orlicz-Minkowski problem are established. Polytopal solutions and/or counterexamples to the general dual-polar Orlicz-Minkowski problem for discrete measures are also provided. Several variations of the general dual-polar Orlicz-Minkowski problem are discussed as well, in particular  the one leading to the general Orlicz-Petty bodies.

 \vskip 2mm \noindent  2010 Mathematics Subject Classification: 52A20,  52A38, 52A39, 52A40.
  \end{abstract}

\section{Introduction}

Lutwak's discovery of the $L_p$ surface area measure and the $L_p$ mixed volume \cite{L3} for $p>1$  gave a new and thriving life to the Brunn-Minkowski theory.  Among those fundamental objects related to the $L_p$ surface area measure and the $L_p$ mixed volume, the $L_p$ Minkowski problem (for $p=1$ in \cite{Minkowski1987, Minkowski1903} by Minkowski and for $p\neq 1$ in  \cite{L3} by Lutwak) and the $L_p$ affine surface area  (for $p=1$ in \cite{Blaschke1923} by Blaschke, for $p>1$ in \cite{Lutwak1996} by Lutwak and for $p<1$ in \cite{CE2004} by Sch\"{u}tt and Werner) arguably  have the  greatest influence. The former one aims to find convex bodies (i.e., convex compact sets in $\Rn$ with nonempty interiors) so that their $L_p$ surface area measures coincide with a pre-given nonzero finite Borel measure $\mu$ defined on the unit sphere $\sphere$.  The $L_p$ Minkowski problem has attracted tremendous attention in different areas, such as analysis, convex geometry, and partial differential equations  (see e.g., \cite{BLYZ2013, Chen2006, CW2006, HMS2004, HugLYZ, EDG2004, GZhu2015I, GZhupreprint} among others).  In particular, it is closely related to the far-reaching optimal mass transportation problem via the Monge-Amp\`{e}re type equations. Solutions to the $L_p$ Minkowski problem have been used to develop the powerful tool of convexification for Sobolev functions and to establish the elegant $L_p$ affine Sobolev inequalities as well as the related P\'{o}lya-Szeg\H{o} principles, see e.g., \cite{CLYZ-1, HS-JFA-1, HSXasymmetric, LYZ02, zh99b, GZhang2007}. The latter one (i.e., the $L_p$ affine surface area) is more on the differential properties of convex bodies. It has many beautiful properties, including the affine invariant valuation and being $0$ for polytopes (if $p>0$); these properties make the $L_p$ affine surface areas   perfect geometric invariants in characterizing the affine valuations, the $L_p$ affine isoperimetric inequalities, and approximation of convex bodies by polytopes \cite{Gruber1993, LR1999, LudR, LSW2006, CE1807, WY2008}. The elegant integral expression for the $L_p$ affine surface area also leads to nice observations of its connection with the $f$-divergence \cite{JW2014, GE2012, Werner2012}. It is worth to mention that the celebrated Blaschke-Santal\'{o} inequality was originally established as a consequence of the combination of the solutions to the $L_p$ Minkowski problem and  the affine isoperimetric inequalities for the $L_p$ affine surface area  (in particular, with both $p=1$) (see e.g., \cite{Sch} for details). In words, the importance of the $L_p$ Minkowski problem and the $L_p$ affine surface area can never be over-emphasized.

The $L_p$ Minkowski problem and the $L_p$ affine surface area were apparently developed in completely different approaches, however, they were nicely connected through the $L_p$ geominimal surface area and  the $L_p$ Petty bodies \cite{Lutwak1996, Ye2015, ZHY2016}.  As the bridge to connect several geometries (affine, Minkowski and relative), the $L_p$ geominimal surface area is crucial in convex geometry and, in particular, share many properties similar to those for the $L_p$ affine surface area.  Let $\cKon$ be the set of convex compact sets in $\Rn$ with the origin $o$ in their interiors.  Finding the $L_p$ Petty bodies of $K\in \cKon$ for $p\in \R\setminus\{0, -n\}$ requires to solve the following optimization problem (with $\mu$ being the $L_p$ surface area measure of $K$):  \begin{equation}\label{p-polar-04-07-19} \inf /\sup\bigg\{\int_{\sphere}h^p_{L^\circ}(u) \,d\mu(u): \ \ L\in \cKon \ \ \mathrm{and}\ \  V(L)=V(\ball)\bigg\},\end{equation} where $\ball$ is the unit Euclidean ball in $\Rn$, $V(\cdot)$ stands for the volume, $L^\circ$ denotes the polar body of $L\in \cKon$, and  $h_L$ is the support function of $L$ (see Section \ref{Section-2} for notations).  As explained in \cite{LuoYeZhu},  the $L_p$ Minkowski problem can be viewed as the ``polarity" of  (\ref{p-polar-04-07-19}) (in particular, for $\mu$ nice enough such as $\mu$ being even)  aiming to find convex bodies (ideally in $\cKon$) to solve the optimization problem similar to (\ref{p-polar-04-07-19}), namely with $L^\circ$ replaced by $L$.  On the other hand, the $L_p$ affine surface area of $K\in \cKon$ can be defined through a formula similar to (\ref{p-polar-04-07-19}) for $\mu$ being the $L_p$ surface area measure of $K$, but with $L\in \cKon$ and $h_{L^\circ}$ replaced by $L$ belong to star bodies about the origin and, respectively,  $\rho_L^{-1}$ where $\rho_L$ is the radial function of $L$  (see \cite{Lutwak1996, Ye2015, ZHY2016} for more details).

The main purpose of this article is to give a systematic study to the general dual-polar Orlicz-Minkowski problem, which extends problem (\ref{p-polar-04-07-19}) in the arguably most general way: with the function $t^p$ (from the integrand of the objective functional)  and $V(L)$ in problem (\ref{p-polar-04-07-19}) replaced by a (general nonhomogeneous) continuous function $\varphi: (0, \infty)\rightarrow (0, \infty)$ and, respectively, $\dveV(L)$, the general dual volume  of $L$,  formulated by  $$\dveV(L)=\int_{S^{n-1}}G(\rho_L(u),u)\,du$$ with  $\,du$ the spherical measure of $\sphere$. Namely, we pose the following problem: {\em Under what conditions on a nonzero finite Borel measure $\mu$ defined
on $\sphere$, continuous functions $\varphi: (0, \infty)\rightarrow (0, \infty)$ and
$G: (0, \infty)\times \sphere\rightarrow (0, \infty)$  can we find a convex body $K\in\cKon$ solving the following optimization problem:}
 \begin{eqnarray}\label{dual-polar-4-12-19}
 \inf /\sup \left\{ \int_{\sphere}\varphi(h_{Q^\circ}(u))d\mu(u): Q\in \cKon \ \ \mathrm{and} \ \ \dveV(Q)=\dveV(\ball)\right\}.
 \end{eqnarray} In particular, problem (\ref{dual-polar-4-12-19}) becomes  problem (\ref{p-polar-04-07-19})  when $\varphi(t)=t^p$ and $G(t, u)=t^n/n$. Moreover, problem (\ref{dual-polar-4-12-19}) also contains as a special case the recent polar Orlicz-Minkowski problem introduced in \cite{LuoYeZhu} by Luo, Ye and Zhu, i.e., solving the following optimization problem: \begin{eqnarray}\label{1-dual-polar-4-12-19}
 \inf /\sup \left\{ \int_{\sphere}\varphi(h_{Q^\circ}(u))d\mu(u): Q\in \cKon \ \ \mathrm{and} \ \ V(Q)=V(\ball)\right\}.
 \end{eqnarray} Note that closely related to (\ref{1-dual-polar-4-12-19}) are the Orlicz affine and geominimal surface areas, which were proposed in \cite{Ye2015c, SHG2015, ZHY2016}.   In fact, one can observe that  (\ref{dual-polar-4-12-19}) not only generalizes  (\ref{1-dual-polar-4-12-19}), but also is ``dual" to  (\ref{1-dual-polar-4-12-19}).  This is one of our motivations to  study the  general dual-polar Orlicz-Minkowski problem.  

Another motivation for our general dual-polar Orlicz-Minkowski problem is its relation and close connection with the recent general dual Orlicz-Minkowski problem in \cite{GHWXY} by Gardner, Hug, Weil, Xing and Ye, and in \cite{GHXY} by Gardner, Hug, Xing and Ye. Indeed, the fundamental geometric invariant $\dveV(\cdot)$ was mainly introduced to derive the general dual Orlicz curvature measures, the key ingredients of the general dual Orlicz-Minkowski problem.  Such Minkowski type problem extends not only the $L_p$ Minkowski problem by Lutwak \cite{L3} and its Orlicz counterpart by Haberl, Lutwak, Yang and Zhang  \cite{HLYZ-2010}, but also the recently initiated dual Minkowski problem by Huang, Lutwak, Yang and Zhang \cite{ LYZActa}, the $L_p$ dual Minkowski problem by  Lutwak, Yang and Zhang \cite{LYZ-Lp}, the dual Orlicz-Minkowski problem by Zhu, Xing and Ye \cite{ZSY2017}, and the general dual Orlicz-Minkowski problem by Xing and Ye \cite{XY2017-1}. Here we would like to emphasize the elegance and significance of the groundbreaking work \cite{ LYZActa}, where the authors, at the first time, proved the far-reaching variational formula for the $q$th dual volume (i.e., the case when $G(t, u)=t^q/n$ for $q\neq 0$) in terms of the logarithmic addition. Such variational formula  can be viewed as a perfect vinculum to deeply connect the two closely related but quite different branches of convex geometry: the $L_p$ Brunn-Minkowski theory for convex bodies and its dual theory for star bodies. The variational formula has been quickly extended to other cases such as \cite{LYZ-Lp, XY2017-1, ZSY2017}, and achieves its most generality when the $q$th volume and the logarithmic addition are replaced by the general dual volume $\dveV(\cdot)$ and an Orlicz addition involving $\varphi$, respectively, in \cite{GHWXY}.  In many circumstance, solving the  general dual Orlicz-Minkowski problem requires to find solutions to the following optimization problem:  \begin{eqnarray}\label{dual--4-12-19}
 \inf /\sup \left\{ \int_{\sphere}\varphi(h_{Q}(u))d\mu(u): Q\in \cKon \ \ \mathrm{and} \ \ \dveV(Q)=\dveV(\ball)\right\}.
 \end{eqnarray} In particular, if $G(t, u)=t^n/n$, (\ref{dual--4-12-19}) recovers the Orlicz-Minkowski problem \cite{HLYZ-2010}. 
In view of (\ref{dual-polar-4-12-19}), one sees that  (\ref{dual-polar-4-12-19}) is ``polar" to (\ref{dual--4-12-19}). It is our belief that, like the general dual Orlicz-Minkowski problem,  the newly proposed general dual-polar Orlicz-Minkowski problem will constitute one of the core objectives in the rapidly developing dual Orlicz-Brunn-Minkowski theory recently started from the work \cite{ghwy15} by Gardner, Hug, Weil and Ye, and independently the work \cite{Zhub2014} by Zhu, Zhou and Xu. 

Our paper is organized as follows. Section \ref{Section-2} provides  a brief collection of notations and well-known facts from convex geometry. In Section \ref{Section:3}, we will introduce the homogeneous general dual volume,  $\hG(\cdot)$, a geometric invariant sharing properties rather similar to those for the general dual volume $\dveV(\cdot)$. Properties of $\hG(\cdot)$, such as, the homogeneity, continuity and monotonicity, are proved in Proposition \ref{properties-G-hat}. Lemma \ref{lemmaforinterior} provides reasonable conditions on $G: (0, \infty)\times \sphere\rightarrow (0, \infty)$ such that, roughly speaking,  if $Q_i\rightarrow Q_0$ in the Hausdorff metric with $Q_i\in \cKon$ for each $i\geq 1$ and  $\{\dveV(Q_i^\circ)\}_{i\geq 1}$ (or $\{\hG(Q_i^\circ)\}_{i\geq 1}$, respectively) as a sequence of real numbers  is bounded, then $Q_0\in \cKon$. This lemma is the key tool to show the existence of solutions to our general dual-polar Orlicz-Minkowski problem (i.e., (\ref{dual-polar-4-12-19})).

 Section \ref{section-4} dedicates to establish the continuity, uniqueness, and existence of solutions to the general dual-polar Orlicz-Minkowski problem. In particular, we first obtain the polytopal solutions to the general dual-polar Orlicz-Minkowski problem when the measure $\mu$ is discrete under certain conditions such as  $\varphi$ being increasing and the infimum in (\ref{dual-polar-4-12-19}) being considered; the detailed statements can be found in Theorem \ref{polardualorlicz1}. In Proposition \ref{denial of other possibilities}, the nonexistence of solutions to the general dual-polar Orlicz-Minkowski problem for discrete measures are proved by counterexamples if the supremum in (\ref{dual-polar-4-12-19}) is
considered,  or if  the infimum is considered with $\varphi$ being decreasing. As $\dveV(\cdot)$ and $\hG(\cdot)$ are not invariant under volume-preserving linear transforms on $\Rn$, our calculations in  Proposition \ref{denial of other possibilities} are more delicate than those in  \cite{LuoYeZhu} where  the volume is considered. Our main results are given in Theorem \ref{polardualorlicz-1-29-1} and Corollary \ref{con-4-12-19}, where the existence,  uniqueness and continuity of solutions to the general dual-polar Orlicz-Minkowski problem for general nonzero finite Borel measure $\mu$ (instead of discrete measures) are provided. Our proofs are based on the approximation of convex bodies by polytopes. 

 Section \ref{Section:5} aims to investigate several variations of the general dual-polar Orlicz-Minkowski problem, including those leading to the most general definitions extending the $L_p$ Petty bodies (see Section \ref{section 5-3}). In Section \ref{section 5-1}, the objective functional  $\int_{\sphere}\varphi(h_{Q^\circ}(u))d\mu(u)$  in (\ref{dual-polar-4-12-19}) will be replaced by the ``Orlicz norm"  $\|h_{Q^\circ}\|_{\mu,\varphi}$. In this case,  the continuity, uniqueness, and existence of solutions are rather similar to those in Section \ref{section-4}. The second variation, considered in Section \ref{section 5-2}, is quite different from the general dual-polar Orlicz-Minkowski problem (\ref{dual-polar-4-12-19}). It replaces the general dual volume $\dveV(\cdot)$ by the general volume formulated as follows: for $K\in \cKon$,  \begin{equation*}
\vG(K)=\int_{\sphere}G(h_{K}(u),u)\,dS_K(u),
\end{equation*} where $S_K$ denotes the surface area measure of $K$ defined on $\sphere$. Although the geometric invariant $\vG(\cdot)$  has most properties required to solve the related polar Orlicz-Minkowski problem, it lacks the monotonicity in terms of set inclusion, a key ingredient in the proofs of the main results in Section   \ref{section-4}. With the help of the celebrated isoperimetric inequality, we are able to find a substitution of Lemma \ref{lemmaforinterior} for $\vG(\cdot)$ and this will be stated in Lemma \ref{lemmaforinterior-polar}.  Consequently,  the existence of solutions to the related polar Orlicz-Minkowski problem is established in Theorem \ref{polardualminkowskvG-2-2-8}.

\section{Preliminaries and Notations}\label{Section-2}

In the $n$-dimensional Euclidean space $\Rn$, $\ball$ denotes the
unit Euclidean ball and $\sphere$ denotes the unit sphere. Denote by
$\{e_1, \cdots,  e_n\}$ the canonical orthonormal basis of $\Rn$. By
$\cKn$  we mean the set of all compact convex subsets of $\Rn$. For
each $K\in \cKn$, one can define its support function
$h_{K}:\sphere\rightarrow \R$ by $h_{K}(u)=\max_{x\in K}\langle
x,u\rangle$ for any $u\in\sphere,$ where $\langle x, y\rangle$ is
the usual inner product in $\Rn$.  A natural metric on $\cKn$ is the
Hausdorff metric $d_H$, where for $K, L\in \cKn$, one has $$d_H(K,
L)=\|h_{K}-h_{L}\|_{\infty}=\max_{u\in\sphere}|h_K(u)-h_L(u)|.$$ We
say the sequence $K_1, K_2, \cdots, K_i, \cdots \in \cKn$ converges
to $K\in \cKn$ in the Hausdorff metric, denoted by $K_i\rightarrow
K$,  if $\lim_{i\rightarrow\infty}d_H(K_i,  K)=0.$ The Blaschke
selection theorem provides a powerful machinery to solve Minkowski
type problems. It asserts that  if $K_i\in \cKn$ and  there  exists a constant $R$ such that $K_i\subset R\ball$ for all $i\in
\N$,
then there exist a subsequence $\{K_{i_j}\}_{j\geq 1}$ of  $\{K_{i}\}_{i\geq 1}$ and $K\in \cKn$ such that
$K_{i_j}\rightarrow K$ as $j\rightarrow \infty$ in the Hausdorff
metric.

Denote by $o$ the origin of $\Rn$.  A convex body in $\Rn$ is a
compact convex subset of $\R^n$ with nonempty interior. Let
$\cKo\subset \cKn$ denote the set of all  convex bodies containing
$o$. For $K\in \cKo$, $h_K$ is a nonnegative function defined on
$\sphere$.  Besides the support function, for $K\in \cKo$, one can
also define the radial function
$\rho_{K}:\sphere\rightarrow[0,\infty)$ by
$\rho_{K}(u)=\max\{\lambda\ge 0:\lambda u\in K\}$ for $u \in
\sphere$. In particular, $\rho_K(u)u\in \partial K$, where $\partial
K$ denotes the boundary of $K$. For convenience, in later context,
we will also use $\into K$ to denote the interior of $K$. It can be
easily checked that $\rho_{sK}=s\cdot \rho_K$ and $h_{sK}=s\cdot
h_K$ for $s>0$ and $K\in \cKo$.

 Associated to each $K\in \cKo$ is the surface area measure
$S_K(\cdot)$ defined on $\sphere$ which may be formulated by
$S_K(\eta)=\cH(\nu_K^{-1}(\eta))$ for each Borel set $\eta\subset
\sphere$ (see e.g., \cite{Sch}), where  $\cH$ is the $(n-1)$
dimensional Hausdorff measure of $\partial K$, $\nu_K$ denotes the Gauss map of $K$
and $\nu_K^{-1}$ denotes the reverse Gauss map of $K$.   It is
worthwhile to mention that for  $K\in\cKo$, its volume, denoted by
$V(K)$,  takes the following forms:
\[
V(K)=\frac{1}{n}\int_{\sphere}h_K(u)\,dS_K(u)=\frac{1}{n}\int_{\sphere}\rho_K(u)^n\,du,
\]  where $\,du$ denotes the spherical measure of $\sphere$ (i.e., the Hausdorff measure on $\sphere$).

Let $\cKon\subset\cKo$ be the set of  convex bodies in $\Rn$ with the origin $o$ in their interiors. For each $K\in \cKon$, both $h_K$ and $\rho_K$ are strictly positive functions on $\sphere$. A useful fact is that $K_i\rightarrow K$,  with $K_i\in \cKon$   for all $i\in \N$ and $K\in \cKon$, in the Hausdorff metric is equivalent to $\rho_{K_i}$ convergent to $\rho_K$ uniformly on $\sphere$.  The polar body of $K\in\cKon$, denoted by $K^\circ$, may be formulated by
    \[
    K^\circ=\big\{x\in\R^n:~\langle x,y\rangle\leq 1 \text{ for any } y\in K\big\}.
    \]  An easily established fact is that if $K\in\cKon$, then
    $K^\circ\in\cKon$ and $K=K^{\circ\circ}.$ Moreover,  $
    \rho_{K^\circ}(u)\cdot h_{K}(u)=1$  for any $K\in\cKon$ and for any $u\in \sphere$. Clearly,
$S_{tK}=t^{n-1}S_K$ for any $t>0$ and $K\in \cKon$.
  For more background in convex geometry, please see e.g., \cite{Gruber2007, Sch}.

Let $G:\ (0,\infty)\times\sphere\rightarrow(0,\infty)$ be a
continuous function. The general dual volume of $K\in
\cKon$, denoted by $\dveV(K)$, was proposed in \cite{GHWXY} as follows:
\begin{equation}\label{general-dual-2019-1-12}
\dveV(K)=\int_{S^{n-1}}G(\rho_K(u),u)\,du.\end{equation}  When $G:
[0, \infty)\times \sphere\rightarrow [0, \infty)$,  the general dual
volume can be defined for $K\in \cKo$ with the formula same as
(\ref{general-dual-2019-1-12}).  Note that the general dual volume
$\dveV(\cdot)$ was used to derive the general dual Orlicz curvature
measures and hence plays central roles in establishing the existence
of solutions to the recently proposed general dual Orlicz-Minkowski
problem \cite{GHWXY, GHXY}. When $G(t, u)=\frac{1}{n}t^n$, one gets
$ \dveV(K) =V(K)$, and when $G(t, u)=\frac{1}{n}t^q$ for $q\neq 0,
n$,  $ \dveV(K)$ becomes the $q$th dual volume $\widetilde{V}_q(K)$
which plays fundamental roles in the dual Brunn-Minkowski theory
\cite{L1, Lut1988, Lutwak1990} and the $L_p$ dual Minkowski problem
(see e.g., \cite{BorFor, BHP, BorLYZZ, ChenHuangZhao,  LYZActa,
HuangZhao,  LYZ-Lp, zhao}). When $G(t, u)=G(t, e_1)$ for all $(t,
u)\in (0, \infty)\times \sphere$, $\dveV(K)$ becomes  the dual
Orlicz-quermassintegral  in \cite{ZSY2017}; while if $G(t,
u)=\int_0^t\phi(ru)r^{n-1}\,dr$ or $G(t,
u)=\int_t^{\infty}\phi(ru)r^{n-1}\,dr$ for some function $\phi:
\Rn\rightarrow (0, \infty)$, then  $\dveV(K)$ becomes the general
dual Orlicz quermassintegral  in \cite{XY2017-1}.  See \cite{GHWXY}
for more special cases. It has been proved that
$\dveV(K_i)\rightarrow \dveV(K)$  for $G: (0, \infty) \times \sphere \rightarrow (0,
\infty)$ being continuous and $K_i\rightarrow K$ with $K, K_i\in \cKon$ for all $i\in \N$ \cite[Lemma
6.1]{GHWXY} or  $G: [0, \infty)\times \sphere \rightarrow [0, \infty)$ being continuous and
$K_i\rightarrow K$ with $K, K_i\in \cKo$ for all $i\in \N$ \cite[Lemma 3.2]{GHXY}.  It
is easy to check that $\dveV(\cdot)$ in general is not homogeneous
on $\cKo$ and/or $\cKon$. Note that the general dual volume
$\dveV(\cdot)$ can be defined not only for convex bodies, but also
for star-shaped sets, see \cite{GHWXY} for more details.

The following property may be useful in later context. Denote by
$O(n)$ the set of all orthogonal matrices on $\Rn$, that is, for any
$T\in O(n)$, one has $TT^t=T^tT=\mathbb{I}_n$, where $T^t$ denotes
the transpose of $T$ and $\mathbb{I}_n$ is the identity map on
$\Rn$.
 \bp \label{rotation invariance}
Let $K\in \cKon$. If  $G(t, u)=\phi(t)$ for
all $(t, u)\in (0, \infty)\times\sphere$ with  $\phi: (0, \infty)\rightarrow
(0, \infty)$ being a continuous function, then  $\dveV(TK)=\dveV(K).$
\begin{proof} Let $G(t, u)=\phi(t)$ for all $t>0$ and $u\in \sphere$.  For $K\in \cKon$ and $T\in O(n)$, then the  determinant of $T$ is $\pm 1$  and \begin{eqnarray*}
\dveV(TK)=\int_{S^{n-1}}\phi(\rho_{TK}(u))\,du =\int_{S^{n-1}}\phi(\rho_{K}(T^{t}u))\,du=\int_{S^{n-1}}\phi(\rho_{K}(v))\,dv
        =\dveV(K),
 \end{eqnarray*} if letting $T^tu=v$. This completes the proof.
 \end{proof}  \ep
 In later context, we will employ Proposition \ref{rotation
invariance} to $G(t, u)=\frac{1}{n}t^q$ for $0\neq
q\in \R$, which implies  $\widetilde{V}_q(TK)=\widetilde{V}_q(K)$  for all
$T\in O(n)$ and all $K\in \cKon$.

The following result is an easy consequence of the weak convergence of
$\mu_i\rightarrow\mu$, but plays essential roles in our later
context. Its proof is simple and will be omitted.

\begin{lemma}\label{uniformly converge-lemma} Let $\mu, \mu_i$ for each $i\in \N$ be nonzero finite Borel measures on $\sphere$ such that $\mu_i\rightarrow \mu$ weakly. Let  $f, f_i$ for each $i\in\N$ be continuous functions on $\sphere$ such that $f_i\rightarrow f$ uniformly on $\sphere$. Then,
$$\lim_{i\rightarrow \infty}\int_{S^{n-1}}f_i\,d\mu_i=\int_{S^{n-1}}f\,d\mu.$$
\end{lemma}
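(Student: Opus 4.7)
The plan is to use the standard triangle-inequality decomposition
\[
\int_{\sphere} f_i\,d\mu_i - \int_{\sphere} f\,d\mu = \int_{\sphere}(f_i-f)\,d\mu_i + \left(\int_{\sphere} f\,d\mu_i - \int_{\sphere} f\,d\mu\right),
\]
and then argue that each of the two summands on the right tends to $0$ as $i\to\infty$.

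For the first summand, I would bound it by
\[
\left|\int_{\sphere}(f_i-f)\,d\mu_i\right| \le \|f_i-f\|_{\infty}\,\mu_i(\sphere).
\]
Since $\mu_i\to\mu$ weakly and the constant function $1$ is continuous on the compact set $\sphere$, weak convergence applied to this test function gives $\mu_i(\sphere)\to\mu(\sphere)$, so the sequence $\{\mu_i(\sphere)\}$ is bounded by some constant $M<\infty$. Combined with the hypothesis $\|f_i-f\|_{\infty}\to 0$, this forces the first summand to vanish in the limit.

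For the second summand, since $f$ is continuous on the compact set $\sphere$ (and thus bounded), the definition of weak convergence of finite Borel measures on $\sphere$ applied to the fixed test function $f$ yields directly
\[
\lim_{i\to\infty}\int_{\sphere} f\,d\mu_i = \int_{\sphere} f\,d\mu.
\]
Adding the two estimates gives the claimed convergence. There is no real obstacle here; the only subtlety worth flagging is the need to observe that weak convergence automatically supplies a uniform bound $\mu_i(\sphere)\le M$ so that the uniform convergence of $f_i$ to $f$ can be converted into convergence of the integrals against the moving measures $\mu_i$.
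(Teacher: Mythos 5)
Your proof is correct, and it is the standard argument that the paper itself alludes to by declaring the proof "simple" and omitting it. The decomposition into $\int(f_i-f)\,d\mu_i$ plus $\int f\,d\mu_i - \int f\,d\mu$, the use of the test function $1$ to obtain the uniform bound $\mu_i(\sphere)\le M$, and the application of weak convergence to the fixed continuous $f$ are exactly the ingredients one expects here; nothing is missing.
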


\section{The homogeneous general dual  volumes and properties}\label{Section:3}
Throughout this paper, $G:\
(0,\infty)\times\sphere\rightarrow(0,\infty)$ is always assumed to
be continuous.  In this section, we will define the homogeneous
general dual  volume and discuss related properties.  For
simplicity, let
\begin{eqnarray*}
\cG_I\!\!&=&\! \!\Big\{G:\ G(t, \cdot) \ \text{is\ continuous,\ strictly\ increasing\ on $t$},\  \lim_{t\rightarrow 0^+}G(t, \cdot)=0,~\lim_{t\rightarrow\infty}G(t, \cdot)=\infty \Big \},\\
\cG_d\!\!&=&\!\! \Big\{G:\ G(t, \cdot) \ \text{is\ continuous,\ strictly\ decreasing\ on $t$},\  \lim_{t\rightarrow 0^+}G(t, \cdot)=\infty,~\lim_{t\rightarrow\infty}G(t, \cdot)=0 \Big \}.
\end{eqnarray*}  The homogeneous general dual volume of  $K\in\cKon$, denoted by $\hG(K)$,  can be formulated by
    \begin{eqnarray}\label{homogenerousvolumeincreasing}
    \hG(K)&=&\inf\left\{\eta>0:\ \   \int_{\sphere}G\bigg(\frac{\rho_{K}(u)}{\eta},u\bigg)
    \,du\leq 1\right\},\ \ \ ~ \text{if}~G\in\cG_I, \\  \label{homogenerousvolumedecreasing}
    \hG(K)&=&\inf\left\{\eta>0:  \ \ \int_{\sphere}G\bigg(\frac{\rho_{K}(u)}{\eta},u\bigg)
    \,du\geq 1\right\},\ \ \ ~ \text{if}~ G\in\cG_d.
    \end{eqnarray}

The following proposition provides a more convenient formula for $\hG(\cdot)$.
 \begin{proposition} \label{equivalent-form-1-23}
Let $K\in\cKon$. For any $G\in \cG_I\cup \cG_d$, there exists a unique $\eta_0>0$ such that
\begin{equation}\label{homogenerousnormforG}
 \int_{\sphere}G\bigg(\frac{\rho_{K}(u)}{\eta_0},u\bigg)\,du=1.
\end{equation} Moreover, $\eta_0=\hG(K)$.
\end{proposition}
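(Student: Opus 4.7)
Fix $K \in \cKon$ and define $f : (0,\infty) \to (0,\infty)$ by
\[
f(\eta) = \int_{\sphere} G\!\left(\frac{\rho_K(u)}{\eta},\, u\right) du.
\]
The strategy is to show that $f$ is continuous and strictly monotone on $(0,\infty)$ with opposite one-sided limits at $0^+$ and $\infty$, so that the equation $f(\eta)=1$ has a unique positive root, and then to identify that root with the infimum defining $\hG(K)$.

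First I would record that, since $K \in \cKon$, the radial function $\rho_K$ is continuous and strictly positive on the compact sphere $\sphere$, hence $c \leq \rho_K(u) \leq C$ for some constants $0 < c \leq C < \infty$. For any compact subinterval $[a,b] \subset (0,\infty)$, the quotient $\rho_K(u)/\eta$ stays in the compact subset $[c/b, C/a]$ of $(0,\infty)$ when $\eta \in [a,b]$, so continuity of $G$ yields a uniform bound for $G(\rho_K(u)/\eta, u)$ on $[a,b] \times \sphere$. Dominated convergence applied to sequences $\eta_i \to \eta$ inside $[a,b]$ then gives continuity of $f$ on $(0,\infty)$.

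Next I would treat the two cases separately. For $G \in \cG_I$, strict monotonicity of $G(\cdot, u)$ makes $\eta \mapsto G(\rho_K(u)/\eta, u)$ strictly decreasing pointwise in $u$, hence $f$ is strictly decreasing. Monotone convergence, combined with the hypotheses $\lim_{t \to 0^+} G(t,u)=0$ and $\lim_{t \to \infty} G(t,u)=\infty$, yields $f(\eta) \to \infty$ as $\eta \to 0^+$ and $f(\eta) \to 0$ as $\eta \to \infty$. The intermediate value theorem then produces a unique $\eta_0>0$ with $f(\eta_0)=1$; strict monotonicity gives $\{\eta>0 : f(\eta) \leq 1\} = [\eta_0,\infty)$, whose infimum is $\eta_0$, matching definition (\ref{homogenerousvolumeincreasing}). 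For $G \in \cG_d$ the argument is symmetric: $f$ is strictly increasing with $f(\eta) \to 0$ as $\eta \to 0^+$ and $f(\eta) \to \infty$ as $\eta \to \infty$, the equation $f(\eta_0)=1$ has a unique solution, and the level set $\{\eta>0 : f(\eta) \geq 1\} = [\eta_0,\infty)$ produces $\eta_0 = \hG(K)$ via (\ref{homogenerousvolumedecreasing}).

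No genuine obstacle appears here; the only subtlety is ensuring that the two endpoint limits of $f$ actually escape to $0$ and $\infty$, and this is precisely where the hypothesis $K \in \cKon$ enters through the uniform positive lower bound $c$ on $\rho_K$. The remainder is a routine intermediate value argument combined with monotone and dominated convergence, with the uniqueness being an immediate byproduct of strict monotonicity.
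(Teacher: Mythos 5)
Your proof is correct and follows essentially the same route as the paper's: define the auxiliary function $H_K(\eta)=\int_{\sphere}G(\rho_K(u)/\eta,u)\,du$, show it is continuous and strictly monotone with opposite one-sided limits at $0^+$ and $\infty$, and conclude by the intermediate value theorem, identifying $\eta_0$ with $\hG(K)$ via the structure of the level set. The only cosmetic difference is that the paper passes the limits through the integral with Fatou's lemma (at $\eta\to 0^+$) and dominated convergence (at $\eta\to\infty$), whereas you invoke monotone convergence in both directions, which is equally valid since the integrand is monotone in $\eta$ and integrable at each fixed $\eta$.
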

\begin{proof} The proof of this result is standard. For  $\eta\in (0, \infty)$ and $K\in\cKon$, let $G\in \cG_I$ and
$$
H_K(\eta)=  \int_{\sphere}G\bigg(\frac{\rho_K(u)}{\eta},u\bigg)\,du.
$$ As  $K\in\cKon$, there exist positive constants $r$ and $R$ such that $r\le \rho_K\le R$.
Thus for any $u\in S^{n-1}$,
 \begin{equation}\label{equivalent definition-ineq-1}
 \int_{S^{n-1}}G\bigg(\frac{r}{\eta},u\bigg)\,du\ \le \
H_K(\eta)\ \le \
 \int_{S^{n-1}}G\bigg(\frac{R}{\eta},u\bigg)\,du.
 \end{equation}
This, together with $G\in\cG_I$ and Fatou's lemma, implies that
 $$\liminf_{\eta\rightarrow 0^+}H_K(\eta)\ \ge \ \liminf_{\eta\rightarrow 0^+}
 \int_{\sphere}G\left(\frac{r}{\eta},u\right)du\ \ge \
 \int_{\sphere}\liminf_{\eta\rightarrow
0^+}G\left(\frac{r}{\eta},u\right)du=\infty.
$$
 On the other hand, the dominated convergence theorem yields, by (\ref{equivalent definition-ineq-1}), that
 $$
\lim_{\eta\rightarrow \infty}H_K(\eta)\ \leq\  \lim_{\eta\rightarrow \infty} \int_{S^{n-1}}G\bigg(\frac{R}{\eta},u \bigg)\,du= \int_{S^{n-1}} \lim_{\eta\rightarrow \infty}G\bigg(\frac{R}{\eta},u\bigg)\,du=0.
$$ Thus, $\lim_{\eta\rightarrow 0^+}H_K(\eta)=\infty$   and
 $\lim_{\eta\rightarrow \infty}H_K(\eta)=0.$ As $G\in \cG_I$ is continuous and strictly increasing,
 $H_K(\eta)$ is clearly continuous and strictly decreasing on $\eta\in (0, \infty)$. Hence, there exists a unique $\eta_0>0$ such that $H_K(\eta_0)=1$, which proves
(\ref{homogenerousnormforG}). Clearly $\eta_0=\hG(K)$ by
(\ref{homogenerousvolumeincreasing}).

The case for $G\in \cG_d$ follows along the similar lines as above, and its proof will be omitted.   \end{proof}

Clearly, if $G(t, u)=t^q/n$ with $q\neq 0$ for all $(t, u)\in (0, \infty)\times \sphere$, then $$\hG(K)=\bigg(\frac{1}{n}\int_{\sphere} \rho_K^q(u)\,du\bigg)^{1/q}= \big(\widetilde{V}_q(K)\big)^{1/q}.$$  Properties for $\hG(\cdot)$ are summarized in the following proposition.

\bp
\label{properties-G-hat}
Let $G\in \cG_I\cup \cG_d$. Then $\hG(\cdot)$ has the following properties.

\vskip 1mm \noindent i) $\hG(\cdot)$ is homogeneous, that is,
$\hG(sK)=s\hG(K)$ holds for all $s> 0$ and all $K\in \cKon$.

\vskip 1mm \noindent ii)  $\hG(\cdot)$ is continuous on $\cKon$ in terms of the Hausdorff metric, that is, for any sequence $\{K_i\}_{i\geq 1}$ such that $K_i\in \cKon$ for all $i\in \N$ and $K_i\rightarrow K\in \cKon$, then  $\hG(K_i)\rightarrow \hG(K). $

\vskip 1mm\noindent iii) $\hG(\cdot)$ is strictly increasing, that is, for any $K, L\in \cKon$ such that
$K\subsetneq L$, then $\hG(K)<\hG(L)$. \ep

 \begin{proof}  i) The desired argument follows trivially from Proposition \ref{equivalent-form-1-23},  and  $\rho_{sK}=s\rho_K$ for all $s>0$.

  \vskip 2mm \noindent ii) Let  $K_i\in \cKon$ for all $i\in \N$ and $K_i\rightarrow K\in \cKon$.   Then  $\rho_{K_i}\rightarrow \rho_K$ uniformly on $\sphere$. Moreover, there exist two positive constants $r_K<R_K$ such that $r_K\leq \rho_K\leq R_K$ and $r_K\leq \rho_{K_i}\leq R_K$ for all $ i\in \N. $ For $G\in \cG_I$, it follows from  Proposition \ref{equivalent-form-1-23} and (\ref{equivalent definition-ineq-1})
that for each $i\in \N$,  \begin{eqnarray*} \label{compare-1-23-1}
\int_{\sphere}G\bigg(\frac{r_K}{\hG(K_i)},u\bigg)\,du\ \leq \ 1=\ \int_{\sphere}G\bigg(\frac{\rho_{K_i}(u)}{\hG(K_i)},u\bigg)\,du\
\leq  \int_{\sphere}G\bigg(\frac{R_K}{\hG(K_i)},u\bigg)\,du.
\end{eqnarray*}  Suppose that $\inf_{i\in\mathbb{N}}
\widehat{V}_G(K_i)=0$, and without loss of generality, assume that  $\lim_{i\rightarrow \infty}
\widehat{V}_G(K_i)=0$. Then for any $\varepsilon>0$, there exists $i_{\varepsilon}\in \N$ such that $\hG(K_i)<\varepsilon$ for all $i>i_{\varepsilon}$. Thus, for $i> i_{\varepsilon}$, \begin{eqnarray*}
 \int_{\sphere}G\bigg(\frac{r_K}{\varepsilon},u\bigg)\,du\ \le \ \int_{\sphere}G\bigg(\frac{r_K}{\hG(K_i)},u\bigg)\,du\ \leq \ 1.
\end{eqnarray*}  Fatou's lemma and the fact that $\lim_{t\rightarrow
\infty}G(t, \cdot)=\infty$ yield
\begin{eqnarray*}
 \infty=\int_{\sphere}\liminf_{\varepsilon\rightarrow 0^+}G\bigg(\frac{r_K}{\varepsilon},u\bigg)\,du\ \le \ \liminf_{\varepsilon\rightarrow 0^+}\int_{\sphere}G\bigg(\frac{r_K}{\varepsilon},u\bigg)\,du\  \leq \ 1,
\end{eqnarray*} a contradiction. Hence, $A_1=\inf_{i\in\mathbb{N}}
\widehat{V}_G(K_i)>0.$  Moreover, for all $u\in \sphere$ and all $i\in \N$,  $$G\bigg(\frac{\rho_{K_i}(u)}{\hG(K_i)},u\bigg)\leq G\bigg(\frac{R_{K}}{A_1},u\bigg). $$

Assume that $\limsup_{i\rightarrow \infty} \hG(K_i)>\hG(K)$. There exists a subsequence $\{K_{i_j}\}$ of $\{K_i\}$ such that $\lim_{j\rightarrow \infty} \hG(K_{i_j})>\hG(K).$ Together with Proposition \ref{equivalent-form-1-23}  and the dominated convergence theorem, one has \begin{eqnarray*}
 1&=& \lim_{j\rightarrow \infty} \int_{\sphere}G\bigg(\frac{\rho_{K_{i_j}}(u)}{\hG(K_{i_j})},u\bigg)\,du\\ &=& \int_{\sphere} \lim_{j\rightarrow \infty} G\bigg(\frac{\rho_{K_{i_j}}(u)}{\hG(K_{i_j})},u\bigg)\,du\\ &=&\int_{\sphere} G\bigg(\frac{\rho_{K}(u)}{ \lim_{j\rightarrow \infty}\hG(K_{i_j})},u\bigg)\,du \\ &<& \int_{\sphere} G\bigg(\frac{\rho_{K}(u)}{\hG(K)},u\bigg)\,du=1.
\end{eqnarray*}
This is a contradiction and hence $\limsup_{i\rightarrow \infty}
\hG(K_i) \leq \hG(K).$ Similarly, $\liminf_{i\rightarrow \infty}
\hG(K_i) \geq \hG(K)$  also holds, which leads to
$\lim_{i\rightarrow \infty} \hG(K_i) =\hG(K)$ as desired.

The case for $G\in \cG_d$ follows along the same lines, and its proof will be omitted.

\vskip 2mm \noindent iii) Let $G\in \cG_I$ and let $K, L\in \cKon$ such that $K\subsetneq L$. Then, the
spherical measure of the set $E=\{u\in \sphere:
\rho_K(u)<\rho_L(u)\}$ is positive.  By Proposition
\ref{equivalent-form-1-23}, one has \begin{eqnarray*}1 &=&\int_{\sphere}G\bigg(\frac{\rho_{L}(u)}{\hG(L)},u\bigg)\,du\\&=&
\int_{\sphere}G\bigg(\frac{\rho_{K}(u)}{\hG(K)},u\bigg)\,du\\ &=&
\int_{E}G\bigg(\frac{\rho_{K}(u)}{\hG(K)},u\bigg)\,du+\int_{\sphere\setminus
E}G\bigg(\frac{\rho_{K}(u)}{\hG(K)},u\bigg)\,du\\ &<&
\int_{E}G\bigg(\frac{\rho_{L}(u)}{\hG(K)},u\bigg)\,du+\int_{\sphere\setminus
E}G\bigg(\frac{\rho_{L}(u)}{\hG(K)},u\bigg)\,du\\ &=&
\int_{\sphere}G\bigg(\frac{\rho_{L}(u)}{\hG(K)},u\bigg)\,du.
\end{eqnarray*}  Then $\hG(K)<\hG(L)$ follows from the fact that $G(t, \cdot)$ is strictly increasing on $t\in (0, \infty)$.

The case for $G\in \cG_d$ follows along the same lines, and its proof will be omitted.
 \end{proof}

 For $G: (0, \infty) \times \sphere \rightarrow (0, \infty)$, define two families of convex bodies as follows:
 \begin{eqnarray*}
 \cBt&=&\big\{Q\in\cKon:\  \dveV{(Q^\circ)}=\dveV{(\ball)}\big\}; \label{def-B-1-23-1}\\
\cBh&=&\big\{Q\in\cK_{(o)}^n:\
\hG(Q^\circ)=\hG(\ball)\big\},  \ \ \ \text{if} \ \ G\in \cG_I\cup \cG_d.\nonumber
 \end{eqnarray*}
 It is obvious that both $\cBt$ and $\cBh$ are nonempty as they all contain the unit Euclidean ball
$\ball$.

The following lemma plays essential roles in later context.

\begin{lemma}\label{lemmaforinterior} Let
$G: (0,\infty)\times\sphere\rightarrow (0, \infty)$ be a continuous function. For $q\in \R$, let $G_q(t, u)= \frac{G(t,u)}{t^q}$. Suppose that  there exists a constant   $q\geq n-1$ such that
    \begin{equation}\label{conditionforG}
    \inf\Big\{G_q(t,u): \ \ \ t\geq 1 \ \ \mathrm{and}\  \ u\in \sphere\Big\} >0.
    \end{equation}  Then the following statements hold.

 \vskip 2mm \noindent i) If $\{Q_i\}_{i\geq 1}$ with $Q_i\in \cBt$ for all $i\in \N$ is a bounded sequence, then there exist a subsequence $\{Q_{i_j}\}_{j\geq 1}$ of $\{Q_i\}_{i\geq 1}$ and a convex body $Q_0\in \cBt$ such that $Q_{i_j}\rightarrow Q_0.$

\vskip 2mm \noindent ii) If in addition $G\in \cG_I $, the statement in i) also holds if $\cBt$ is replaced by $\cBh$.
 \end{lemma}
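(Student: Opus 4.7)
The plan is to combine the Blaschke selection theorem with a boundary--blowup estimate whose exponent matches the non-integrability threshold on $\sphere$; the precise assumption $q\ge n-1$ is exactly what makes this estimate work. Specifically, from boundedness and Blaschke, extract a subsequence $Q_{i_j}\to Q_0$ in Hausdorff with $Q_0\in\cKn$, and note that $o\in Q_0$ since $o\in Q_i$ for each $i$. The key point is to show $o\in\into(Q_0)$, after which the conclusion follows from Hausdorff--continuity of polars together with continuity of $\dveV$ (respectively $\hG$).

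Suppose for contradiction that $o\in\partial Q_0$. A supporting hyperplane at $o$ produces some $u_0\in\sphere$ with $h_{Q_0}(u_0)=0$. All $Q_{i_j}$ lie in a common ball of radius $R$, so their support functions are $R$-Lipschitz, and uniform convergence $h_{Q_{i_j}}\to h_{Q_0}$ yields $\e_j\to 0^+$ with
\[
h_{Q_{i_j}}(u)\ \le\ h_{Q_{i_j}}(u_0)+R|u-u_0|\ \le\ \e_j+R|u-u_0|,\qquad u\in \sphere.
\]
Hence $\rho_{Q_{i_j}^\circ}(u)=1/h_{Q_{i_j}}(u)\ge (\e_j+R|u-u_0|)^{-1}$, and on a small spherical cap $U$ around $u_0$ this is $\ge 1$ once $j$ is large. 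The hypothesis \eqref{conditionforG} then produces a constant $c>0$ with
\[
G\bigl(\rho_{Q_{i_j}^\circ}(u),u\bigr)\ \ge\ c\,\rho_{Q_{i_j}^\circ}(u)^q\ \ge\ \frac{c}{(\e_j+R|u-u_0|)^q}\qquad\text{on }U.
\]

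Passing to geodesic polar coordinates on $\sphere$ about $u_0$, the surface element is comparable to $\theta^{n-2}\,d\theta\,d\sigma$, so by monotone convergence
\[
\liminf_{j\to\infty}\int_U\frac{du}{(\e_j+R|u-u_0|)^q}\ \gtrsim\ \int_0^\delta \theta^{\,n-2-q}\,d\theta\ =\ +\infty,
\]
the divergence hinging precisely on $q\ge n-1$. In case (i) this forces $\dveV(Q_{i_j}^\circ)\to\infty$, contradicting $\dveV(Q_{i_j}^\circ)=\dveV(\ball)$. In case (ii), Proposition \ref{equivalent-form-1-23} translates $\hG(Q_{i_j}^\circ)=\hG(\ball)=:s_0$ into $\int_\sphere G(\rho_{Q_{i_j}^\circ}/s_0,u)\,du=1$, and the same divergence applied to the rescaled quantity (with $R$ and $\e_j$ multiplied by $s_0$) contradicts the value $1$. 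Therefore $Q_0\in\cKon$.

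Finally, since $h_{Q_0}$ is bounded away from $0$, uniform convergence $h_{Q_{i_j}}\to h_{Q_0}$ yields uniform convergence $\rho_{Q_{i_j}^\circ}=h_{Q_{i_j}}^{-1}\to\rho_{Q_0^\circ}$, whence $Q_{i_j}^\circ\to Q_0^\circ$ in the Hausdorff metric. In case (i), \cite[Lemma~6.1]{GHWXY} delivers $\dveV(Q_{i_j}^\circ)\to\dveV(Q_0^\circ)$, placing $Q_0$ in $\cBt$; in case (ii), Proposition \ref{properties-G-hat}(ii) delivers $\hG(Q_{i_j}^\circ)\to\hG(Q_0^\circ)$, placing $Q_0$ in $\cBh$. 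I expect the main obstacle to be the local divergence step: one must carefully pair the exponent $q\ge n-1$ with the correct surface-measure asymptotic $\theta^{n-2}$ on $\sphere$ near $u_0$, which is precisely why the threshold in the hypothesis on $G$ is $n-1$.
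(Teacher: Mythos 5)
Your proof is correct and follows the same overall strategy as the paper's: extract a subsequence via Blaschke selection, assume for contradiction $o\in\partial Q_0$, derive a pointwise lower bound on $\rho_{Q_{i_j}^\circ}$ that forces the general dual volume to blow up through the borderline integral $\int_0^\delta \theta^{\,n-2-q}\,d\theta$ (divergent precisely when $q\ge n-1$), and conclude via continuity of $\dveV$ (resp.\ $\hG$) and of the polar map on $\cKon$.

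Where you differ from the paper is in how you obtain the key lower bound. The paper picks $u_{i_j}$ minimizing $h_{Q_{i_j}}$, invokes Proposition \ref{rotation invariance} to reduce to $u_{i_j}=e_n$, sandwiches $\rho_{Q_{i_j}^\circ}$ below by the radial function of an explicit cone $\bC_j$ with apex $e_n/r_{i_j}$ and base $B^{n-1}/R$, writes that radial function explicitly, and then evaluates $\widetilde V_q(\bC_j)$ in spherical coordinates before applying Fatou. You bypass the cone construction entirely: since $Q_{i_j}\subset R\ball$ makes $h_{Q_{i_j}}$ $R$-Lipschitz, and $h_{Q_{i_j}}(u_0)\le\e_j\to 0$, you get $\rho_{Q_{i_j}^\circ}(u)\ge(\e_j+R|u-u_0|)^{-1}$ directly, then integrate on a cap around $u_0$ in geodesic polar coordinates. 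This avoids both the $O(n)$-invariance step and the explicit cone formula, and is a little more transparent about why the threshold is $q\ge n-1$. One small remark: your use of \eqref{conditionforG} requires $\rho_{Q_{i_j}^\circ}\ge 1$ (or $\ge 1/s_0$ after rescaling in part (ii)) on the cap, which you correctly arrange by shrinking the cap and taking $j$ large; the paper handles this point instead via the rescaled version (\ref{conditionforG--1}) with $c=1/R$, which gives the bound on all of $\sphere$ at once. Both are fine, but it is worth noting the two are not identical in this detail.
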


  \noindent {\bf Remark.} Clearly $G(t, u)=t^q$ for some $q\geq n-1$ satisfies  (\ref{conditionforG}). In particular $G(t, u)=t^n/n$ satisfies  (\ref{conditionforG}) and hence Lemma \ref{lemmaforinterior} recovers \cite[Lemma 3.2]{Lutwak1996}.  It can be easily checked that formula (\ref{conditionforG}) is equivalent to: there exist constants $c, C>0$, such that  \begin{equation}\label{conditionforG--1}
    \inf\Big\{G_q(t,u): \ \ \ t\geq c \ \ \mathrm{and}\  \ u\in \sphere\Big\} >C.
    \end{equation} Moreover,  if $G\in \cG_d$, then $G$ does not satisfy (\ref{conditionforG}). In fact, for all $q\geq n-1$ and for all $u\in \sphere$,  $$\lim_{t\rightarrow\infty} G_q(t, u)= \lim_{t\rightarrow\infty} G(t, u)\times \lim_{t\rightarrow\infty} t^{-q} =0. $$

 \begin{proof} Let $\{Q_i\}_{i\geq 1}$  be a bounded sequence with $Q_i\in \cBt$ (or, respectively, $Q_i\in \cBh$) for all $i\in \N$. It follows from the Blaschke selection theorem that there exist a subsequence of  $\{Q_i\}_{i\geq 1}$, say $\{Q_{i_j}\}_{j\geq 1}$, and  a compact convex set $Q_0\in \cKn$, such that $Q_{i_j}\rightarrow Q_0$ in the Hausdorff metric. As $o\in \mathrm{int} Q_{i_j}$ for all $j\in \N$, one has, $o\in Q_0$. In order to show $Q_0\in \cBt$ (or, respectively, $Q_0\in \cBh$),  we first need to show $o\in \mathrm{int}Q_0$.

 \vskip 2mm \noindent i) To this end, we assume that $o\in \partial Q_0$ and seek for contradictions. As $\{Q_i\}_{i\geq 1}$ is a bounded sequence, there exists a constant $R>0$ such that $Q_i\subset R\ball$ for each $i\in \N$. For each $j\in \N$, one can find $u_{i_j}\in\sphere$ such that
$r_{i_j}=h_{Q_{i_j}}(u_{i_j})=\min_{u\in\sphere}h_{Q_{i_j}}(u).$  As $o\in \partial Q_0$, one sees that  $\lim_{j\rightarrow\infty}r_{i_j}=0$. The fact that $Q_{i_j}\subset R \ball$  implies that $\frac{1}{R}  \ball \subset  Q_{i_j}^\circ$, and in particular,  $
    \rho_{Q_{i_j}^\circ}(u)\geq\frac{1}{R}$ for any $u\in\sphere$.

   Let the constant $c$ in (\ref{conditionforG--1}) be $\frac{1}{R}$. For some fixed constants $q\geq n-1$ and $C>0$,
    \begin{eqnarray}\label{boundnessfordvev}
    \dveV(Q_{i_j}^\circ)=\int_{\sphere}G\big(\rho_{Q_{i_j}^\circ}(u),u\big)\,du\geq C\int_{\sphere}\big(\rho_{Q_{i_j}^\circ}(u)\big)^q\,du=Cn \widetilde{V}_q(Q_{i_j}^\circ).
    \end{eqnarray}
    For any $T\in O(n)$,
$(TQ_{i_j})^\circ=(T^t)^{-1}Q_{i_j}^\circ=TQ_{i_j}^\circ$ as
$T^tT=\mathbb{I}_n$ where $T^{-1}$ denotes the inverse map of $T$.
It follows from Proposition \ref{rotation invariance} that
$\widetilde{V}_q(Q_{i_j}^\circ)$ is $O(n)$-invariant.  Hence, for
convenience, one can assume
that $u_{i_j}=e_n$. The radial function
$\rho_{Q_{i_j}^\circ}$ can be bounded from below by the radial
function of $\bC_j=Cone\Big(o, \frac{1}{R},
\frac{e_n}{r_{i_j}}\Big)$, the cone with base $\frac{B^{n-1}}{R}$
and  the apex $\frac{e_n}{r_{i_j}}$. Note that
$$\rho_{\bC_j}(u)=\left\{
\begin{array}{ll} \frac{1}{R \sin \theta+r_{i_j}\cos \theta},  & \
\text{if}\ \ u\in \sphere\ \ \text{such\ that}\ \ \langle e_n,
u\rangle\geq 0; \\ 0, &  \  \ \text{if}\ \ u\in \sphere\ \
\text{such\ that}\ \ \langle e_n, u\rangle< 0,\end{array} \right.$$
where $\theta\in [0, \pi/2]$ is the angle between $u$ and $e_n$ (see
Figure \ref{fig-1-1}).

    \begin{figure}[!htb]
        \center{\includegraphics[width=80mm]
        {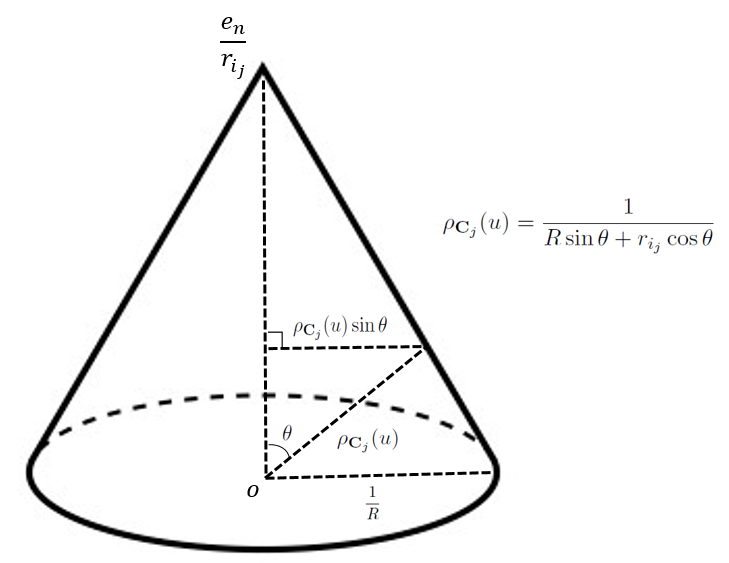}}
        \caption{\label{fig-1-1} The cone $\bC_j$}
      \end{figure}

\vskip 2mm Indeed, from Figure \ref{fig-1-1}, for $u\in \sphere$
such that $\langle u, e_n\rangle\geq 0$, one has
$$\frac{\rho_{\bC_j}(u)\cdot \sin
\theta}{R^{-1}}=\frac{r_{i_j}^{-1}-\rho_{\bC_j}(u)\cdot \cos
\theta}{r_{i_j}^{-1}}\Longrightarrow \rho_{\bC_j}(u)= \frac{1}{R
\sin \theta+r_{i_j}\cos \theta}.$$ Using the general spherical
coordinate (see, e.g., \cite[Page 14]{BorLYZZ}) by letting
    \[
    u=(v\sin\theta, \cos\theta)\in\sphere, \ v\in S^{n-2} \ \ \text{and} \ \ \theta\in[0,\pi],
    \] we have   $\,du =   (\sin\theta)^{n-2}\,d\theta  \, dv,$ where $\,dv$ denotes the spherical measure of $S^{n-2}$. Thus  \begin{eqnarray} n \widetilde{V}_q(\bC_j)&=&  \int_{S^{n-2}} \bigg(
 \int_{0}^{\frac{\pi}{2}}
    \left(\frac{1}{R\sin\theta+r_{i_j}\cos\theta}\right)^q(\sin\theta)^{n-2}\,d\theta \bigg) \,   dv\nonumber \\ &=& (n-1)V(B^{n-1})
 \int_{0}^{\frac{\pi}{2}}
    \left(\frac{1}{R\sin\theta+r_{i_j}\cos\theta}\right)^q(\sin\theta)^{n-2} \,d\theta.\label{qth-dual-cone} \end{eqnarray}  We will not need the precise value of $\widetilde{V}_q(\bC_j)$, however if $q=n$, formula (\ref{qth-dual-cone}) does lead to  $$\widetilde{V}_n(\bC_j)=V(\bC_j)=\frac{V(B^{n-1})}{n R^{n-1} r_{i_j}},$$ which coincides with the calculation provided in \cite[Lemma 3.2]{Lutwak1996}.

    Together with (\ref{boundnessfordvev}), $\rho_{Q_{i_j}^\circ}\geq \rho_{\bC_j}$, Fatou's lemma, and $\lim_{j\rightarrow\infty}r_{i_j}=0$, one has, if $q\geq n-1$, then  $n-2-q\leq -1$  and
    \begin{eqnarray}  \liminf_{j\rightarrow \infty} \dveV(Q_{i_j}^\circ) &\geq&  \liminf_{j\rightarrow \infty} Cn \widetilde{V}_q(\bC_j) \nonumber \\ &=&C\cdot (n-1)V(B^{n-1})
 \cdot  \liminf_{j\rightarrow \infty}  \int_{0}^{\frac{\pi}{2}}
    \left(\frac{1}{R\sin\theta+r_{i_j}\cos\theta}\right)^q(\sin\theta)^{n-2}d\theta \nonumber \\  &\geq&C\cdot (n-1)V(B^{n-1})
 \cdot \int_{0}^{\frac{\pi}{2}}  \liminf_{j\rightarrow \infty}
    \left(\frac{1}{R\sin\theta+r_{i_j}\cos\theta}\right)^q(\sin\theta)^{n-2}\,d\theta \nonumber \\
&=&\frac{C\cdot (n-1)V(B^{n-1})}{R^{q}}\int_{0}^{\frac{\pi}{2}}(\sin\theta)^{n-2-q}\,d\theta\nonumber \\
&\geq &\frac{C\cdot (n-1)V(B^{n-1})}{R^{q}}\int_{0}^{\frac{\pi}{2}}\frac{1}{\sin\theta}\,d\theta\nonumber  \\
&=& \frac{C\cdot (n-1)V(B^{n-1})}{R^{q}} \cdot \ln\tan(\theta/2)
\Big|_{\theta=0}^{\theta=\pi/2} =\infty. \label{contradiction--1}
    \end{eqnarray}  On the other hand,  as $Q_{i_j}\in \cBt$ for each $j\in \N$, then
    $$\dveV(Q_{i_j}^\circ)=\dveV{(\ball)}=\int_{\sphere}G(1, u)\,du<\infty.$$
   This is a contradiction and thus $o\in\text{int}{Q_0}$.

   As $Q_{i_j}\in \cKon$ for each $j\in \N$ and $Q_0\in \cKon$, $Q_{i_j}\rightarrow Q_0$ yields $Q_{i_j}^{\circ} \rightarrow Q_0^{\circ}.$ Together with the continuity of $\dveV(\cdot)$  (see \cite[Lemma 6.1]{GHWXY}) and the fact that $\dveV(Q_{i_j}^\circ)=\dveV{(\ball)}$ for each $j\in \N$, one gets $\dveV(Q_0^\circ)=\lim_{j\rightarrow \infty} \dveV(Q_{i_j}^\circ)=\dveV{(\ball)}.$ This concludes that $Q_0\in \cBt$ as desired.

   \vskip 2mm \noindent ii)  Again, we assume that $o\in \partial Q_0$ and seek for contradictions.  It follows from  Proposition \ref{equivalent-form-1-23} that  $\hG(\ball)>0$ is a finite constant.  Following notations in i), Proposition \ref{equivalent-form-1-23}   and $\hG(Q^\circ_{i_j})=\hG(\ball)$ for each $j\in \N$  yield that   \begin{equation}  \int_{\sphere}G\bigg(\frac{\rho_{Q_{i_j}^\circ}(u)}{\hG(\ball)},u\bigg)\,du=1.\label{Q-i-j--1}
\end{equation}
As $\frac{1}{R}  \ball \subset  Q_{i_j}^\circ$ for each $j\in \N$,
one can take the constant $c$ in  (\ref{conditionforG--1}) to be
$\frac{1}{R\cdot \hG(\ball)}$ and there exists a constant $C>0$ such
that, for all $u\in \sphere$ and some $q\geq n-1$,
  \begin{equation*}  G\bigg(\frac{\rho_{Q_{i_j}^\circ}(u)}{\hG(\ball)},u\bigg)\geq  C\cdot \bigg(\frac{\rho_{Q_{i_j}^\circ}(u)}{\hG(\ball)}\bigg)^q.
\end{equation*}  Together with  (\ref{Q-i-j--1}), one has,
  \begin{equation*}  \int_{\sphere}C\cdot \bigg(\frac{\rho_{Q_{i_j}^\circ}(u)}{\hG(\ball)}\bigg)^q\,du\leq 1 \Longrightarrow  C\cdot  \int_{\sphere} \big({\rho_{Q_{i_j}^\circ}(u)}\big)^q\,du\leq \big({\hG(\ball)}\big)^q.
\end{equation*}
Similar to (\ref{contradiction--1}), one gets  \begin{equation*}\infty =\liminf_{j\rightarrow \infty} C\cdot  \int_{\sphere} \big({\rho_{Q_{i_j}^\circ}(u)}\big)^q\,du\leq \big({\hG(\ball)}\big)^q,
\end{equation*} a contradiction and hence $o\in\text{int}{Q_0}$. The rest of the proof follows along the lines in i), where the continuity of $\hG(\cdot)$
(see Proposition \ref{properties-G-hat}) shall be used.
\end{proof}

\section{The general dual-polar Orlicz-Minkowski problem} \label{section-4}

Motivated by the polar Orlicz-Minkowski problem proposed in \cite{LuoYeZhu} and by the general dual Orlicz-Minkowski problem proposed in \cite{GHWXY, GHXY}, we propose the following general dual-polar Orlicz-Minkowski problem:

 \begin{problem}\label{general-dual-polar}   Under what conditions on a nonzero finite Borel measure $\mu$ defined
on $\sphere$, continuous functions $\varphi: (0, \infty)\rightarrow (0, \infty)$ and
$G\in \cG_I\cup \cG_d$  can we find a convex body $K\in\cKon$ solving the following optimization problems:
 \begin{eqnarray}\label{Problem-1-1-1-1}
&&\inf /\sup \left\{ \int_{\sphere}\varphi(h_{Q}(u))d\mu(u): Q\in \cBt \right\};\\
&&\inf /\sup \left\{\int_{\sphere}\varphi(h_{Q}(u))d\mu(u): Q\in \cBh \right\}.\label{Problem-1-1-2-2}
 \end{eqnarray} \end{problem}

 Although the function $G$ in the optimization problem (\ref{Problem-1-1-1-1}) can be any continuous function $G: (0, \infty)\times \sphere\rightarrow (0, \infty)$, to find its solutions, only those  $G\in \cG_I\cup \cG_d$
with monotonicity will be considered. One reason is that most $G$ of interest (such as $G(t, u)=t^q/n$ for $0\neq q\in \R$) are monotone. More importantly, without the monotonicity of $G$, the set $\cBt$ may contain only one convex body $\ball$ (for instance, if $G(1, u)<G(t, u)$ for all $(t, u)\in (0, \infty)\times \sphere$ such that $t\neq 1$).  In this case, the optimization problem (\ref{Problem-1-1-1-1}) becomes trivial.  Note that when $G(t, u)=t^n/n$, both $\dveV(\cdot)$ and (essentially) $\hG$ are volume, then Problem \ref{general-dual-polar} becomes the polar Orlicz-Minkowski problem posed in \cite{LuoYeZhu}.

 In later context, we always assume that $\varphi: (0, \infty)\rightarrow (0, \infty)$ is a continuous function.  For convenience, let $\varphi(0+)= \lim_{t\rightarrow0^+}\varphi(t)$ and $\varphi(\infty)=\lim_{t\rightarrow\infty}\varphi(t)$ provided the above limits exist (either finite or infinite).
We shall need the following classes of functions:
\begin{eqnarray*}
\cI&=&\{\varphi:  \varphi \ \text{is strictly increasing on}\ \
(0,\infty)\ \ \text{with}\ \
 \varphi(0+)=0,~\varphi(1)=1\ \text{and}\ \
 \varphi(\infty)=\infty\}; \\  \cD&=&\{\varphi:  \varphi \ \text{is strictly decreasing on}\
(0,\infty)\ \ \text{with}\ \
 \varphi(0+)=\infty,~\varphi(1)=1\  \text{and}\ \
\varphi(\infty)=0\}.
\end{eqnarray*} Note that the normalization value $\varphi(1)=1$ is mainly for technique convenience and $\varphi(1)$ can be modified to any positive numbers.

\subsection{The general dual-polar Orlicz-Minkowski problem for discrete measures}

In this subsection, we will solve the  general dual-polar
Orlicz-Minkowski problem for discrete measures. Throughout this subsection,  let $\mu$ be a discrete measure of the following form:
\begin{eqnarray} \mu=\sum_{i=1}^m\lambda_i\delta_{u_i},
\label{discrete--1--1}  \end{eqnarray}  where $\lambda_i>0$,
$\delta_{u_i}$ denotes the Dirac measure at $u_i$,  and $\{u_1,
\cdots, u_m\}$ is a subset of $\sphere$ which is not concentrated on
any closed hemisphere (clearly $m\geq n+1$).  It has been proved in \cite[Propositions 3.1
and 3.3]{LuoYeZhu} that the solutions to the polar Orlicz-Minkowski
problem for discrete measures must be polytopes, the convex hulls of
finite points in $\Rn$.  It is well-known that all convex bodies can
be approximated by polytopes, and hence  to study the Minkowski type
problems for discrete measures is very important and receives
extensive attention, see e.g., \cite{BorFor, BHZ2016, Cole2015,
GHXY, HongYeZhang-2017, Huanghe, HugLYZ, JianLu, liaijun2014,
WuXiLeng, GZhu2015I, GZhu2015II, GZhupreprint}.

The following lemma shows that if, when the infimum is considered,
Problem \ref{general-dual-polar}  for discrete measures has
solutions, then the solutions must be polytopes.

\begin{lemma}\label{polytope-lemma}
Let $\varphi\in\mathcal{I}$ and
$\mu$  be as in (\ref{discrete--1--1}) whose support $\{u_1,\cdots, u_m\}$
is not concentrated on any closed hemisphere. Let
$G\in \cG_I$.

\vskip 2mm \noindent i)
If $\widetilde{M}\in \cBt$ is a solution to the optimization problem (\ref{Problem-1-1-1-1}) when the infimum is considered,  then $\widetilde{M}$ is a polytope, and  $u_1,\cdots, u_m$ are the corresponding unit normal vectors of its faces.

\vskip 2mm \noindent ii)
If $\widehat{M}\in \cBh$ is a solution to the optimization problem (\ref{Problem-1-1-2-2}) when the infimum is considered,  then $\widehat{M}$ is a polytope, and  $u_1,\cdots, u_m$ are the corresponding unit normal vectors of its faces.
\end{lemma}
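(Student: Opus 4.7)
The plan is to argue by contradiction: circumscribe $\widetilde M$ by the polytope $P$ supported along the directions $u_1,\dots,u_m$, observe that $P$ matches the objective value at $\widetilde M$, and then rescale $P$ slightly inward to obtain a strictly better admissible competitor. Concretely, set
$$P = \bigcap_{i=1}^m \bigl\{x\in\Rn:\langle x,u_i\rangle\leq h_{\widetilde M}(u_i)\bigr\}.$$
The hypothesis that $\{u_1,\dots,u_m\}$ is not concentrated on any closed hemisphere forces $P$ to be bounded, and the positivity of each $h_{\widetilde M}(u_i)$ forces $o\in\into P$, so $P\in\cKon$ is a polytope whose outer unit normals lie in $\{u_1,\dots,u_m\}$. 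By construction $\widetilde M\subseteq P$ and $h_P(u_i)=h_{\widetilde M}(u_i)$ for every $i$, so the two bodies have identical objective values against the discrete measure $\mu$. It therefore suffices to rule out $\widetilde M\subsetneq P$.

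Assume this strict inclusion; taking polars yields $P^\circ\subsetneq \widetilde M^\circ$, with $\rho_{P^\circ}<\rho_{\widetilde M^\circ}$ on a nonempty open subset of $\sphere$. Since $G\in\cG_I$ is strictly increasing in its first variable, this gives $\dveV(P^\circ)<\dveV(\widetilde M^\circ)=\dveV(\ball)$. The rescaling step is then to find $\lambda_0\in(0,1)$ with $\lambda_0 P\in\cBt$: the function
$$\lambda\mapsto\dveV\bigl((\lambda P)^\circ\bigr)=\int_{\sphere}G\bigl(\lambda^{-1}\rho_{P^\circ}(u),u\bigr)\,du$$
is continuous and strictly decreasing in $\lambda$, equals $\dveV(P^\circ)<\dveV(\ball)$ at $\lambda=1$, and tends to $\infty$ as $\lambda\to 0^+$ by monotone convergence together with $\lim_{t\to\infty}G(t,\cdot)=\infty$; the intermediate value theorem supplies the desired $\lambda_0$. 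Strict monotonicity of $\varphi\in\cI$ together with $\lambda_0<1$ then gives
$$\int_{\sphere}\varphi(h_{\lambda_0P}(u))\,d\mu(u)=\sum_{i=1}^m\lambda_i\varphi(\lambda_0h_{\widetilde M}(u_i))<\sum_{i=1}^m\lambda_i\varphi(h_{\widetilde M}(u_i)),$$
contradicting minimality of $\widetilde M$ over $\cBt$. Hence $\widetilde M=P$ is a polytope whose face normals lie in $\{u_1,\dots,u_m\}$.

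Part ii) follows the same four-step scheme with $\dveV$ replaced by $\hG$: strict monotonicity from Proposition \ref{properties-G-hat}(iii) gives $\hG(P^\circ)<\hG(\widehat M^\circ)=\hG(\ball)$ if $\widehat M\subsetneq P$, and the homogeneity from Proposition \ref{properties-G-hat}(i) makes rescaling explicit via $\lambda_0=\hG(P^\circ)/\hG(\ball)\in(0,1)$, so no intermediate-value argument is needed. The only nontrivial point is therefore the rescaling limit in part i)---specifically the blow-up of $\dveV((\lambda P)^\circ)$ as $\lambda\to 0^+$---which the homogeneity of $\hG$ short-circuits in part ii).
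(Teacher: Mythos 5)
Your proof is correct and is essentially the same argument the paper gives: construct the circumscribed polytope $P$ with the same support numbers in the directions $u_1,\dots,u_m$, rescale to land back on the constraint set $\cBt$ (resp.\ $\cBh$), and invoke strict monotonicity of $\varphi$ together with minimality. The only organizational difference is that you argue by contradiction from $\widetilde M\subsetneq P$, obtaining $\lambda_0<1$ strictly and hence a strictly better competitor, whereas the paper proceeds directly by producing $t_0\geq 1$ with $P/t_0\in\cBt$, using the inequality chain to force $t_0=1$, and then appealing to strict monotonicity of $\dveV$ under set inclusion to conclude $P=\widetilde M$; your observation that part ii) bypasses the intermediate-value step via the homogeneity $\hG(\lambda^{-1}P^\circ)=\lambda^{-1}\hG(P^\circ)$ is a small but genuine simplification.
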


\begin{proof} Let $G\in \cG_I$.  For discrete measure $\mu$ and $Q\in \cKon$, one has
$$\int_{\sphere}\varphi(h_{Q}(u))d\mu(u)=\sum_{i=1}^m\varphi(h_{Q}(u_i))\mu(\{u_i\})=\sum_{i=1}^m \lambda_{i} \varphi(h_{Q}(u_i)). $$
 i) Let $\widetilde{M}\in \cBt$ be a solution to the optimization problem (\ref{Problem-1-1-1-1}). Define the polytope $P$  as follows:  $\widetilde{M}\subseteq P$, $h_P(u_i)=h_{\widetilde{M}}(u_i)$ for $1\leq i\leq m$, and $u_1,\cdots, u_m$ are the corresponding unit normal vectors of the  faces of $P$. As $\widetilde{M}\in \cBt$, one has $\dveV(\widetilde{M}^\circ)=\dveV(\ball)$ and $o\in \mathrm{int}\widetilde{M}$. Hence $P\in \cKon$ and $P^\circ \subseteq
\widetilde{M}^\circ$. Similar to the proof of Proposition
\ref{properties-G-hat} iii),  one can obtain that $\dveV(\cdot)$ for
$G\in \cG_I$ is strictly increasing in terms of set inclusion. In
particular, $\dveV(P^\circ) \leq
\dveV(\widetilde{M}^\circ)=\dveV(\ball).$ As
$\lim_{t\rightarrow\infty}G(t, \cdot)=\infty,$ there exists $t_0\geq
1$ such that $ \dveV(t_0P^\circ) =\dveV(\ball).$  That is, $P/t_0\in
\cBt$. Due to the minimality of $\widetilde{M}$ and the fact that
$\varphi\in \cI$ is strictly increasing,  one has \begin{eqnarray*}
\sum_{i=1}^m \lambda_{i} \varphi(h_{P}(u_i))=\sum_{i=1}^m
\lambda_{i} \varphi(h_{\widetilde{M}}(u_i))\leq \sum_{i=1}^m
\lambda_{i} \varphi(h_{P/t_0}(u_i))\leq \sum_{i=1}^m \lambda_{i}
\varphi(h_{P}(u_i)),\end{eqnarray*}  which yields  $t_0=1$. Then,
$\dveV(P^\circ) =\dveV(\ball)=\dveV(\widetilde{M}^\circ)$ and hence
$P=\widetilde{M}$ following from $\widetilde{M}\subseteq P$.

\vskip 2mm \noindent ii) Proposition \ref{properties-G-hat} iii) asserts that, if $G\in \cG_I$,  $\hG(K)<\hG(L)$ for all $K, L\in \cKon$ such that $K\subsetneq L$. The proof in this case then follows along the same lines as in i), and will be omitted.  \end{proof}

The following result is for the existence of solutions to  Problem \ref{general-dual-polar}  for discrete measures if the infimum is considered.

\begin{theorem}\label{polardualorlicz1}
Let $\varphi\in\mathcal{I}$ and
$\mu$  be as in (\ref{discrete--1--1}) whose support $\{u_1,\cdots, u_m\}$
is not concentrated on any closed hemisphere. Let
$G\in \cG_I$ be a continuous function such that (\ref{conditionforG}) holds for some $q\geq n-1$.  Then the following statements hold.

\vskip 2mm \noindent i) There exists a polytope
$\widetilde{P}\in\cBt$ with $u_1,\cdots, u_m$ being the corresponding unit normal vectors of its faces,
 such that,
    \begin{equation} \label{model-question-1}
  \sum_{i=1}^m  \lambda_i\varphi(h_{\widetilde{P}} (u_i))=\inf \Big\{
 \sum_{i=1}^m \lambda_i \varphi(h_{Q}(u_i)): \ \ \ Q\in \cBt \Big\}.
    \end{equation}
 \noindent ii)  There exists a polytope
$\widehat{P}\in\cBh$ with $u_1,\cdots, u_m$ being the corresponding unit normal vectors of its faces, such that,
    \[
  \sum_{i=1}^m  \lambda_i\varphi(h_{\widehat{P}} (u_i))=\inf \Big\{
 \sum_{i=1}^m \lambda_i \varphi(h_{Q}(u_i)): \ \ \ Q\in \cBh \Big\}.
    \]\end{theorem}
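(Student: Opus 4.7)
The plan is to produce a minimizer by (a) replacing the admissible class with polytopes whose facet normals lie in $\{u_1,\ldots,u_m\}$ without increasing the objective, (b) showing any such polytopal minimizing sequence is uniformly bounded, and (c) extracting a Hausdorff limit via Lemma~\ref{lemmaforinterior} and checking it attains the infimum. I detail i) in full; part ii) will be the same argument with $\hG$ in place of $\dveV$.

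The infimum in (\ref{model-question-1}) is finite: $\ball^\circ=\ball$ gives $\ball\in\cBt$, so the infimum is at most $\sum_{i=1}^m\lambda_i\varphi(1)=\mu(\sphere)<\infty$. Given $Q\in\cBt$, form the envelope polytope
\[
P_Q=\bigcap_{i=1}^m\{x\in\Rn:\langle x,u_i\rangle\le h_Q(u_i)\}.
\]
Because $\{u_1,\ldots,u_m\}$ is not contained in any closed hemisphere and each $h_Q(u_i)>0$, one checks $P_Q\in\cKon$, $Q\subseteq P_Q$, and $h_{P_Q}(u_i)=h_Q(u_i)$; hence $P_Q^\circ\subseteq Q^\circ$. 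The strict monotonicity of $\dveV$ under set inclusion for $G\in\cG_I$ (proved verbatim as in Proposition~\ref{properties-G-hat} iii)) yields $\dveV(P_Q^\circ)\le\dveV(\ball)$, and the continuity and strict monotonicity of $t\mapsto\dveV(tP_Q^\circ)$ together with $\lim_{t\to\infty}G(t,\cdot)=\infty$ give a unique $t_0\ge1$ with $\dveV(t_0P_Q^\circ)=\dveV(\ball)$, i.e.\ $P_Q/t_0\in\cBt$. Since $h_{P_Q/t_0}(u_i)=h_Q(u_i)/t_0\le h_Q(u_i)$ and $\varphi$ is increasing, the objective at $P_Q/t_0$ does not exceed that at $Q$. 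Therefore the infimum in (\ref{model-question-1}) is unchanged if we restrict to polytopes in $\cBt$ whose outer normals are contained in $\{u_1,\ldots,u_m\}$.

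Let $\{P_j\}$ be such a polytopal minimizing sequence and set $h_i^{(j)}=h_{P_j}(u_i)$. If $h_{i_0}^{(j)}\to\infty$ along some subsequence, then $\sum_i\lambda_i\varphi(h_i^{(j)})\ge\lambda_{i_0}\varphi(h_{i_0}^{(j)})\to\infty$ since $\varphi(\infty)=\infty$, contradicting the finiteness of the infimum. Thus the $h_i^{(j)}$ are uniformly bounded, and since $\{u_1,\ldots,u_m\}$ positively spans $\Rn$, the $P_j$ lie uniformly in a fixed Euclidean ball. Lemma~\ref{lemmaforinterior} i) now yields a subsequence $P_{j_k}\to\widetilde{P}$ in the Hausdorff metric with $\widetilde{P}\in\cBt$. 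Passing to the limit in $P_{j_k}=\bigcap_i\{\langle x,u_i\rangle\le h_i^{(j_k)}\}$ identifies $\widetilde{P}$ as a polytope with facet normals drawn from $\{u_1,\ldots,u_m\}$, and uniform convergence of support functions together with the continuity of $\varphi$ give
\[
\sum_{i=1}^m\lambda_i\varphi(h_{\widetilde{P}}(u_i))=\lim_{k\to\infty}\sum_{i=1}^m\lambda_i\varphi(h_{P_{j_k}}(u_i))=\inf_{Q\in\cBt}\sum_{i=1}^m\lambda_i\varphi(h_{Q}(u_i)),
\]
proving i). Part ii) follows the same scheme: the polytopal reduction becomes even cleaner since the homogeneity of $\hG$ (Proposition~\ref{properties-G-hat} i)) makes the scaling $t_0=\hG(\ball)/\hG(P_Q^\circ)\ge1$ explicit (the inequality $t_0\ge1$ coming from Proposition~\ref{properties-G-hat} iii)), and Lemma~\ref{lemmaforinterior} ii) combined with the continuity of $\hG$ (Proposition~\ref{properties-G-hat} ii)) supplies the limit polytope $\widehat{P}\in\cBh$.

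The main obstacle I anticipate is the polytopal reduction. Unlike the situation in \cite{LuoYeZhu}, where the constraint is given by the volume, neither $\dveV$ nor $\hG$ is affinely invariant and $\dveV$ is not even homogeneous, so one must carefully verify both the strict monotonicity of these functionals evaluated on polar bodies and the existence of a scaling factor $t_0\ge1$ that places the envelope polytope back into the constraint set. Once this reduction is set up, the remainder is a fairly standard Blaschke-selection/continuity argument in which all the specific analytic content about $G$ is bundled into the single application of Lemma~\ref{lemmaforinterior}.
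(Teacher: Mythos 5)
Your proposal is essentially correct and follows the same overall strategy as the paper's proof: reduce to polytopes whose outer normals lie in $\{u_1,\ldots,u_m\}$ by taking the envelope polytope $P_Q$ and rescaling into the constraint set $\cBt$, show the resulting minimizing sequence is uniformly bounded, extract a Hausdorff limit via Lemma~\ref{lemmaforinterior}, and pass to the limit in the objective. The paper organizes things differently --- it first proves Lemma~\ref{polytope-lemma} (which says a solution, if it exists, must be a polytope) and then parametrizes the reduced problem by $z\in\R^m_+$ subject to $P(z)\in\cBt$ --- but the analytic content is identical to what you produce inline, and your direct reduction is arguably cleaner because it makes the equality of infima explicit rather than relying on the reader to extract it from Lemma~\ref{polytope-lemma}.

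One small point where you are looser than the paper: you conclude that $\widetilde{P}$ is a polytope whose facet normals are ``drawn from'' $\{u_1,\ldots,u_m\}$, whereas the theorem asserts that $u_1,\ldots,u_m$ are \emph{the} corresponding normal vectors of the faces. The paper addresses this by showing that at the minimizer one has $h_{P(z^0)}(u_i)=z^0_i$ for every $i$ (otherwise one could strictly decrease the objective at some $u_{i_0}$, contradicting minimality). Your construction actually makes this automatic: since each $P_{j}$ is tight (by the envelope construction, $h_{P_{j}}(u_i)=z_i^{(j)}$), the uniform convergence $h_{P_{j_k}}\to h_{\widetilde{P}}$ gives $h_{\widetilde{P}}(u_i)=z_i^0>0$ for all $i$, so every $u_i$ gives a supporting hyperplane of $\widetilde{P}$ with positive distance from the origin. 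You should say this explicitly so the conclusion about the face normals is visibly established, matching the paper's claim. (Neither you nor the paper actually verifies that each supporting face is full-dimensional, i.e. a facet; that distinction is conventionally elided in this literature and is not a substantive flaw in your argument relative to the paper's.)
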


\begin{proof} By Lemma \ref{polytope-lemma},  to solve (\ref{model-question-1}), it will be enough to find a solution for the following problem:  \begin{equation} \label{model-question-2}
\widetilde{\alpha}=\inf \Big\{
 \sum_{i=1}^m \lambda_i \varphi(z_i): \ \ z\in \R_+^m \ \ \text{such\ that} \ \ P(z)\in \cBt \Big\},
    \end{equation}
where $z=(z_1, \cdots, z_m)\in \R_+^m$  means that each $z_i>0$ and
$$P(z)=\bigcap_{i=1}^m\Big\{x\in \R^n: \ \langle x, u_i\rangle \leq
z_i\Big\}\subset \cKon.$$  Clearly $h_{P(z)}(u_i)\leq z_i$ for all
$i=1, 2, \cdots, m.$

Let $P_1=P(1, \cdots, 1)$. Then $\ball\subsetneq P_1$ and hence $P^\circ_1\subsetneq \ball$. As $G\in \cG_I$ one has $\dveV(P^\circ_1)<\dveV(\ball).$ The facts that $G(t, \cdot)$ is strictly increasing on $t$ and $\lim_{t\rightarrow\infty}G(t, \cdot)=\infty$ imply the existence of $t_1>1$ such that $\dveV(t_1P^\circ_1)=\dveV(\ball).$ In other words, $P_1/t_1\in \cBt$ and then the infimum in (\ref{model-question-2}) is not taken over an empty set.  Moreover, due to $\varphi\in \cI$ (in particular, $\varphi$ is strictly increasing and $\varphi(1)=1$) and $1/t_1<1$, one has, \begin{equation*}  \widetilde{\alpha}\leq  \varphi( 1/t_1) \sum_{i=1}^m \lambda_i \leq \sum_{i=1}^m \lambda_i. \end{equation*}  This in turn implies that $z\in \R^m_+$ in (\ref{model-question-2}) can be restricted in a bounded set, for instance, \begin{equation} \label{bounded-12-21} z_i\leq \varphi^{-1}\Big(\frac{\lambda_1+\cdots+\lambda_m}{\min_{1\leq i\leq m} \lambda_i}\Big), \ \ \ \text{for\ all}\  i=1, 2, \cdots, m.\end{equation}
 Let $z^1, \cdots, z^j\cdots \in \R^m_+$ be the limiting sequence of (\ref{model-question-2}), that is,
 $$
\widetilde{\alpha}= \lim_{j\rightarrow \infty} \sum_{i=1}^m
\lambda_i \varphi(z^j_i) \ \ \text{and} \ \
\dveV(P^\circ(z^j))=\dveV(\ball)\ \ \ \text{for \ all}\ \ j\in \N.
$$
Due to (\ref{bounded-12-21}), without loss of generality, we can
assume that $z^j\rightarrow z^0$ for some $z^0\in \R^m$ and hence
$P(z^j)\rightarrow P(z^0)$ in the Hausdorff metric (see e.g.,
\cite{Sch}). Lemma \ref{lemmaforinterior} yields that $P(z^0)\in
\cBt$, i.e.,  $\dveV(P^\circ(z^0))=\dveV(\ball)$ and  $o\in
\mathrm{int}P(z^0)$. In particular,  $z_i^0>0$
for all $i=1, 2, \cdots, m$.

On the other hand, we claim that $h_{P(z^0)}(u_i)=z^0_i$ for all
$i=1, 2, \cdots, m$. To this end, assume not, then there exists
$i_0\in \{1, 2, \cdots, m\}$ such that
$h_{P(z^0)}(u_{i_0})<z^0_{i_0}.$ As $\varphi\in \cI$ is strictly
increasing and $\lambda_{i_0}>0$, one clearly has   $$
\widetilde{\alpha}= \sum_{i=1}^m \lambda_i \varphi(z^0_i)>
\sum_{i\in \{1, 2, \cdots, m\}\setminus\{i_0\}} \lambda_i
\varphi(z^0_i)+\lambda_{i_0} \varphi(h_{P(z^0)}(u_{i_0})).$$ This
contradicts with the minimality of $\widetilde{\alpha}$.

Let $\widetilde{P}=P(z^0)$. Then $\widetilde{P}\in \cBt$ solves (\ref{model-question-2}) and hence (\ref{model-question-1}).  This concludes the proof of i).

\vskip2mm \noindent ii) The proof is almost identical to the one for i), and will be omitted.
 \end{proof}

It has been proved in \cite{LuoYeZhu} that the existence of
solutions to Problem \ref{general-dual-polar} for discrete measures
in general is invalid  when $G(t, u)=t^n/n$, if the supremum is
considered for $\varphi \in \cI\cup \cD$, or the infimum is
considered for $\varphi \in \cD$. One can also prove similar
arguments for  Problem \ref{general-dual-polar} for discrete
measures with more general $G\in \cG_I$, but more delicate calculations are required.  We only state the
following result as an example.

\begin{proposition}\label{denial of other possibilities}
    Let $\mu$  be as in (\ref{discrete--1--1}) whose support $\{u_1,\cdots, u_m\}$
is not concentrated on any closed hemisphere.  Let $G\in \cG_I$ be such that (\ref{conditionforG})  holds for some $q\geq n-1$.

        \vskip 2mm \noindent i)   If $\varphi\in\cD$ and the first coordinates of $u_1,u_2,\cdots, u_m$ are all nonzero, then
        $$
\inf_{Q\in \cBh}\sum_{i=1}^m \lambda_i \varphi(h_Q(u_i)) =0.
       $$
    ii)    If $\varphi\in\mathcal{I}\cup\mathcal{D}$, then
        $$\sup_{Q\in \cBh} \sum_{i=1}^m \lambda_i \varphi(h_Q(u_i)) =\infty.$$
\end{proposition}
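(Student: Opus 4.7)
The plan is to exhibit, in each case, an explicit sequence $\{Q_k\}\subset\cBh$ driving the functional to the claimed extremum. All sequences will be built from volume-preserving ellipsoids: fixing a unit vector $v\in\sphere$, let $T_{k,v}$ denote the linear map with $T_{k,v}v=kv$ and $T_{k,v}w=k^{-1/(n-1)}w$ for every $w\perp v$, so that $T_{k,v}\ball$ is an ellipsoid with semi-axes $(k,k^{-1/(n-1)},\dots,k^{-1/(n-1)})$. Membership $c_kT_{k,v}\ball\in\cBh$ is equivalent, by the homogeneity in Proposition \ref{properties-G-hat}(i), to $c_k=\hG(T_{k,v}^{-1}\ball)/\hG(\ball)$; unlike the volume case treated in \cite{LuoYeZhu}, this factor is no longer constant and must be estimated asymptotically via Proposition \ref{equivalent-form-1-23} together with the one-sided power bound $G(t,u)\geq Ct^q$ for $t\geq c$ and some $q\geq n-1$ provided by (\ref{conditionforG--1}).

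For part i), set $v=e_1$ and $Q_k=c_kT_{k,e_1}\ball$. Since $h_{T_{k,e_1}\ball}(u_i)=|T_{k,e_1}u_i|\geq k|u_{i,1}|$ and each $u_{i,1}\neq 0$ by hypothesis, it suffices to show $c_kk\to\infty$; then $h_{Q_k}(u_i)\to\infty$ for every $i$, and $\varphi\in\cD$ forces $\varphi(h_{Q_k}(u_i))\to 0$, so the sum tends to $0$. The needed estimate $\hG(T_{k,e_1}^{-1}\ball)\cdot k\to\infty$ will be extracted from the identity $\int_\sphere G(\rho_{T_{k,e_1}^{-1}\ball}(u)/\eta_k,u)\,du=1$ by restricting the integration to the thin equatorial band $\{u:|u_1|\leq k^{-n/(n-1)}\}$, where $\rho_{T_{k,e_1}^{-1}\ball}\asymp k^{1/(n-1)}$ and the spherical measure is $\asymp k^{-n/(n-1)}$; the power bound (\ref{conditionforG--1}) then gives $\eta_k^q\gtrsim k^{(q-n)/(n-1)}$ and consequently $\eta_kk\gtrsim k^{n(q-1)/((n-1)q)}\to\infty$ whenever $q>1$. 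The borderline case $q=1$ (possible only when $n=2$) is handled separately, by exploiting the logarithmic divergence of $\int_\sphere\rho_{T_{k,e_1}^{-1}\ball}\,du$ to obtain $\eta_kk\gtrsim\ln k$.

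For part ii) with $\varphi\in\cI$, take $v=u_1$ in the same construction; then $h_{Q_k}(u_1)=c_kk\to\infty$ by the same estimate, so $\sum_i\lambda_i\varphi(h_{Q_k}(u_i))\geq\lambda_1\varphi(h_{Q_k}(u_1))\to\infty$. For part ii) with $\varphi\in\cD$, dualize: set $Q_k^\circ=c_kT_{k,u_1}\ball$ with $c_k=\hG(\ball)/\hG(T_{k,u_1}\ball)$, so that $h_{Q_k}(u_1)=1/(c_kk)$ and it suffices to show $\hG(T_{k,u_1}\ball)/k\to 0$. If to the contrary $\eta_k:=\hG(T_{k,u_1}\ball)\geq ck$ along a subsequence, then $\rho_{T_{k,u_1}\ball}/\eta_k\leq 1/c$ globally (since $\rho_{T_{k,u_1}\ball}\leq k$) and $\rho_{T_{k,u_1}\ball}(u)/\eta_k\to 0$ pointwise for every $u\in\sphere\setminus\{\pm u_1\}$; continuity of $G$ together with $\lim_{t\to 0^+}G(t,\cdot)=0$ and dominated convergence (dominator $G(1/c,\cdot)$ on $\sphere$) then give $\int_\sphere G(\rho_{T_{k,u_1}\ball}/\eta_k,u)\,du\to 0$, contradicting $\int=1$. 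Therefore $\eta_k/k\to 0$, $h_{Q_k}(u_1)\to 0$, and $\varphi(h_{Q_k}(u_1))\to\infty$. The main technical obstacle throughout is the asymptotic control of $\hG$ for these degenerating ellipsoids using only the one-sided information (\ref{conditionforG}), a difficulty absent in \cite{LuoYeZhu} where $\hG$ is the volume and $\det T_{k,v}=1$ suffices.
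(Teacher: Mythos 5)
Your proposal is correct in substance and takes a genuinely different route from the paper. The paper deforms the ball with $T_\epsilon=\mathrm{diag}(1,\dots,1,\epsilon)$ (non--volume-preserving), so that $L_\epsilon^\circ=T_\epsilon^{-1}\ball$ contains the inscribed cone $\bC_\epsilon$ with base $B^{n-1}$ and apex $\epsilon^{-1}e_n$; the key lower estimate on $\hG(L_\epsilon^\circ)$ is then extracted by comparison with $\widetilde{V}_q(\bC_\epsilon)$, whose divergence was already established in the displayed computation culminating in (\ref{contradiction--1}). You instead use volume-preserving linear maps $T_{k,v}$ and estimate $\hG(T_{k,v}^{-1}\ball)$ directly by restricting the defining integral (\ref{homogenerousnormforG}) to a thin equatorial band. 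After the normalization that puts the bodies in $\cBh$, the two families are essentially the same up to reparametrization, so what differs is the asymptotic bookkeeping: the paper reuses an existing lemma, while your band estimate is self-contained. Your treatment of part ii) with $\varphi\in\cD$ (dominated convergence with dominator $G(1/c,\cdot)$) matches the paper's idea closely.

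Two places where your sketch needs to be tightened to be a complete proof. First, to apply the power bound $G(t,u)\ge Ct^q$ from (\ref{conditionforG--1}) on the band you must ensure $\rho_{T_{k,e_1}^{-1}\ball}(u)/\eta_k\ge c$ there; this requires a case split. If $\eta_k>k^{1/(n-1)}/(2c)$ then already $\eta_k k>k^{n/(n-1)}/(2c)\to\infty$ and you are done; otherwise the bound applies on the band and your exponent computation $\eta_k k\gtrsim k^{n(q-1)/(q(n-1))}$ goes through. Without this dichotomy the argument as written tacitly assumes $\rho/\eta_k\ge c$ on the band. Second, in the borderline case $q=1$, $n=2$, the band estimate alone only gives $\eta_k k\gtrsim 1$; the logarithmic improvement you invoke indeed works, but it requires integrating $\rho$ over the larger region $\{\rho\ge c\eta_k\}$ (whose width is $\asymp (k\eta_k)^{-1}$, not the fixed $k^{-n/(n-1)}$), together with the same case split, to obtain $\eta_k k\gtrsim\ln k$. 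Neither of these difficulties arises in the paper's version because the cone comparison and (\ref{contradiction--1}) already absorb the $q\ge n-1$ borderline uniformly (via the divergence of $\int_0^{\pi/2}(\sin\theta)^{n-2-q}\,d\theta$).
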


\begin{proof}
i) For $0<\epsilon<1$, let
$T_\epsilon=\text{diag}(1,1,\cdots,1,\epsilon)$ and $L_\epsilon=
T_\epsilon \ball.$ It can be checked that $$
\rho_{L_\epsilon}(w)=\big(w_1^2+w_2^2+\cdots+w_{n-1}^2+w_n^2/\epsilon^2\big)^{-1/2}$$
for all $w=(w_1, \cdots, w_n)\in \sphere$.    Thus
$\rho_{L_\epsilon}(w)$ is increasing on $\epsilon>0$ for each $w\in
\sphere$ and then $L_\epsilon$ is increasing in the sense of set
inclusion on $\epsilon>0$. In particular,  $L_\epsilon\subset \ball$
and $\ball\subset  L_\epsilon^\circ=T_\epsilon^{-1} \ball$.
Moreover, $L_\epsilon^\circ=T_\epsilon^{-1} \ball$ is decreasing in
the sense of set inclusion on $\epsilon>0$, and so is
$\hG({L_\epsilon^\circ})$ due to $G\in \cG_I$. By the homogeneity of
$\hG(\cdot)$, one has $\hG(f(\epsilon){L_\epsilon^\circ})=
\hG(\ball)$   if
$$f(\epsilon)=\frac{\hG(\ball)}{\hG({L_\epsilon^\circ})}. $$ We now claim that
$f(\epsilon)\rightarrow 0$, which is equivalent to prove  $\hG({L_\epsilon^\circ})\rightarrow \infty$ as
$\epsilon\rightarrow 0^+.$

To this end, it is enough to prove that
$\sup_{0<\epsilon<1}\hG({L_\epsilon^\circ})=\infty$. Assume that
$\sup_{0<\epsilon<1}\hG({L_\epsilon^\circ})=A_0<\infty$. By
$\ball\subset  L_\epsilon^\circ$ and  (\ref{conditionforG--1}) with
$c=1/A_0$,  there exists a constant $C_A>0$ such that
\begin{eqnarray*}
\int_{\sphere}G\bigg(\frac{\rho_{L_\epsilon^\circ}(u)}{A_0},u\bigg)\,du
\geq C_A
\int_{\sphere}\bigg(\frac{\rho_{L_\epsilon^\circ}(u)}{A_0}\bigg)^q\,du
\geq C_A \int_{\sphere}\bigg(\frac{\rho_{\bC_{\epsilon}}(u)}{A_0}\bigg)^q\,du,
\end{eqnarray*}
where $\bC_{\epsilon}\subset L_\epsilon^\circ$ is the cone with the base  $B^{n-1}$ and  the
apex ${\epsilon^{-1}}{e_n}$. It follows from Proposition \ref{equivalent-form-1-23}, (\ref{qth-dual-cone}), (\ref{contradiction--1}) and $q\geq n-1$ that
\begin{eqnarray*} 1&=& \liminf_{\epsilon\rightarrow 0^+}\int_{\sphere}G\bigg(\frac{\rho_{L_\epsilon^\circ}(u)}{\hG({L_\epsilon^\circ})},u\bigg)\,du\\ &\geq&  \liminf_{\epsilon\rightarrow 0^+} \int_{\sphere}G\bigg(\frac{\rho_{L_\epsilon^\circ}(u)}{A_0},u\bigg)\,du\\
&\geq& \frac{C_A}{A_0^q} \cdot  \liminf_{\epsilon\rightarrow 0^+} \int_{\sphere} \big(
\rho_{\bC_{\epsilon}}(u)\big)^q\,du \nonumber \\ &=& \frac{C_A
(n-1)V(B^{n-1})}{A_0^q} \cdot   \liminf_{\epsilon\rightarrow 0^+} \int_{0}^{\frac{\pi}{2}}
    \left(\frac{1}{\sin\theta+\epsilon\cos\theta}\right)^q(\sin\theta)^{n-2} \,d\theta\\ &=&\infty. \end{eqnarray*} This is a contradiction, which yields $\sup_{0<\epsilon<1}\hG({L_\epsilon^\circ})=\infty$ and then  $f(\epsilon)\rightarrow 0$ as $\epsilon\rightarrow 0^+$.

    Recall that $\hG(f(\epsilon){L_\epsilon^\circ})= \hG(\ball)$ and then $L_\epsilon/f(\epsilon)=T_\epsilon \ball/f(\epsilon)\in \cBh.$ It is assumed that $\alpha=\min_{1\leq i\leq m}\{|(u_i)_1|\}>0$, and hence for all $1\leq i\leq m$ (by letting $v_2=u_i$), $$
h_{L_\epsilon/f(\epsilon)}(u_i)=\max_{v_1\in L_\epsilon/f(\epsilon)}\langle v_1,u_i \rangle
=\max_{v_2\in \ball}\langle T_\epsilon v_2,u_i \rangle/  f(\epsilon)\geq \alpha^2 /f(\epsilon).
$$ The fact that $\varphi\in \cD$ is strictly decreasing yields
\begin{eqnarray*}
            \inf_{Q\in \cBh}\sum_{i=1}^m \lambda_i \varphi(h_Q(u_i))
            \leq  \sum^m_{i=1}\varphi\left(h_{L_\epsilon/f(\epsilon)}(u_i) \right)\cdot\mu(\{u_i\})
            \leq\varphi\left(\alpha^2/f(\epsilon) \right)\cdot\mu(S^{n-1}) \rightarrow 0,
\end{eqnarray*} where we have used $\lim_{\epsilon\rightarrow 0^+}f(\epsilon)= 0$ and $\lim_{t\rightarrow \infty} \varphi(t)=0$. This concludes the proof of i).

  \vskip 2mm \noindent ii) Note that $\mu(\{u_1\})>0$. For any $0<\epsilon<1$, let $\widetilde{L}_\epsilon= TT_{\epsilon} \ball,$
where $T\in O(n)$ is an orthogonal matrix such that  $T^t u_1=e_1$
(indeed, this can always be done by the Gram-Schmidt process). Again  $\widetilde{L}_\epsilon\subset \ball$ and hence $\ball \subset
\widetilde{L}_\epsilon^\circ$.
As in i), one can prove that
$$  f(\epsilon)=\frac{\hG(\ball)}{\hG({\widetilde{L}_\epsilon^\circ})} \rightarrow 0  \ \ \text{as}\ \
\epsilon\rightarrow 0^+.$$ Moreover,
$\hG(f(\epsilon){\widetilde{L}_\epsilon^\circ})= \hG(\ball)$ and
thus $\widetilde{L}_\epsilon/f(\epsilon)\in \cBh.$ One can check (by
letting $v_2=e_1$) that $$
h_{\widetilde{L}_\epsilon/f(\epsilon)}(u_1) =f(\epsilon)^{-1}
\max_{v_2\in \ball}\langle TT_{\epsilon} v_2,u_1 \rangle =
f(\epsilon)^{-1} \langle T^t u_1, \text{diag}(1, 1,
\cdots,1,\epsilon)\cdot e_1 \rangle =f(\epsilon)^{-1}.
$$
Together  with $\varphi\in \cI$ (in particular, $\lim_{t\rightarrow
\infty}\varphi(t)=\infty$), one has
\begin{eqnarray*}
     \sup_{Q\in \cBh}\sum_{i=1}^m \lambda_i \varphi(h_Q(u_i)) &\geq&   \sum^m_{i=1}\varphi\left(h_{\widetilde{L}_\epsilon/f(\epsilon)}(u_i) \right)\cdot\mu(\{u_i\})\\&
    \geq& \varphi\left( h_{\widetilde{L}_\epsilon/f(\epsilon)}(u_1)\right)\cdot \mu(\{u_1\})\\
   &=& \varphi\left(f(\epsilon)^{-1} \right)\cdot \mu(\{u_1\}) \rightarrow \infty,
\end{eqnarray*} as $\epsilon\rightarrow 0^+$, which follows from the fact that $\lim_{\epsilon\rightarrow 0^+} f(\epsilon)=0$.

When $\varphi\in \cD$, let  $\overline{L}_{\epsilon}=\widetilde{L}_{\epsilon}^\circ=\widetilde{L}_{1/\epsilon}$. Hence $\overline{L}_{\epsilon}^\circ\subset\ball$ for all $\epsilon\in (0, 1)$. We claim that $\hG(\overline{L}_{\epsilon}^\circ)\rightarrow 0$ as $\epsilon\rightarrow 0^+$. To this end, it can be checked that $$\rho_{\overline{L}_{\epsilon}^\circ}(u)=\frac{\epsilon}{\sqrt{[(T^tu)_n]^2+\epsilon^2(1-[(T^tu)_n]^2)}},$$ where $(T^tu)_n$ denotes the $n$-th coordinate of $T^tu$. Clearly $\rho_{\overline{L}_{\epsilon}^\circ}(u)\leq 1$ for all $u\in \sphere$ and $\rho_{\overline{L}_{\epsilon}^\circ}(u)\rightarrow 0$ as $\epsilon\rightarrow 0^+$ for all $u\in \eta$, where $\eta=\{u\in \sphere:  (T^tu)_n \neq 0\}.$ Also note that the spherical measure of $\sphere\setminus \eta$ is $0$.

On the other hand, $\overline{L}_{\epsilon}^\circ$ is increasing (in the sense
of set inclusion) and hence $\hG(\overline{L}_{\epsilon}^\circ)$ is strictly
increasing on $\epsilon$ due to Proposition
\ref{properties-G-hat}. To show that
$\hG(\overline{L}_{\epsilon}^\circ)\rightarrow 0$ as $\epsilon\rightarrow 0^+$,
we assume that $\inf_{\epsilon>0}\hG(\overline{L}_{\epsilon}^\circ)=\beta>0$
and seek for contradictions. By Proposition
\ref{equivalent-form-1-23}, one has, for all $\epsilon\in (0,1)$,
\begin{equation}\label{bounded-ee-1-28}
\int_{\sphere}G\bigg(\frac{\rho_{\overline{L}_\epsilon^\circ}(u)}{\beta},u\bigg)\,du\geq
\int_{\sphere}G\bigg(\frac{\rho_{\overline{L}_\epsilon^\circ}(u)}{\hG({\overline{L}_\epsilon^\circ})},u\bigg)\,du=1.\end{equation}
Moreover, as $\rho_{\overline{L}_{\epsilon}^\circ}(u)\leq 1$ for all $u\in
\sphere$, one has, for all $u\in \sphere$,
$$G\bigg(\frac{\rho_{\overline{L}_\epsilon^\circ}(u)}{\beta},u\bigg)\leq
G\bigg(\frac{1}{\beta},u\bigg).$$ Together with
(\ref{bounded-ee-1-28}) and the  dominated convergence theorem, one
gets that    \begin{equation*} 1\leq \ \lim_{\epsilon\rightarrow 0^+}
\int_{\sphere}G\bigg(\frac{\rho_{\overline{L}_\epsilon^\circ}(u)}{\beta},u\bigg)\,du
\ = \int_{\sphere}  \lim_{\epsilon\rightarrow 0^+}
G\bigg(\frac{\rho_{\overline{L}_\epsilon^\circ}(u)}{\beta},u\bigg)\,du=0. \end{equation*} This implies
$\hG(\overline{L}_{\epsilon}^\circ)\rightarrow 0$ as $\epsilon\rightarrow 0^+$. Again,
$\overline{L}_\epsilon/f(\epsilon)\in \cBh$ and $h_{\overline{L}_\epsilon/f(\epsilon)}(u_1)=f(\epsilon)^{-1},$ where  $$f(\epsilon)=\frac{\hG(\ball)}{\hG({\overline{L}_\epsilon^\circ})} \rightarrow \infty  \ \ \text{as}\ \
\epsilon\rightarrow 0^+.$$
Together with $\varphi\in \cD$ (in particular, $\lim_{t\rightarrow 0^+}\varphi(t)=\infty$), one has
\begin{eqnarray*}
     \sup_{Q\in \cBh}\sum_{i=1}^m \lambda_i \varphi(h_Q(u_i))
    \geq \varphi\left( h_{\overline{L}_\epsilon/f(\epsilon)}(u_1)\right)\cdot \mu(\{u_1\})
   = \varphi\left(f(\epsilon)^{-1} \right)\cdot \mu(\{u_1\}) \rightarrow \infty,
\end{eqnarray*} as $\epsilon\rightarrow 0^+$. This concludes the proof of ii).   \end{proof}

It is worth to mention that the argument in Proposition \ref{denial
of other possibilities} ii) for the case $\varphi \in\cD$ indeed
works for all $G\in \cG_I$ without assuming (\ref{conditionforG})
for some $q\geq n-1$. Moreover, the proof of Proposition \ref{denial
of other possibilities} can be slightly modified to show similar
results for the case $\cBt$ and  the details are omitted.

\subsection{The general dual-polar Orlicz-Minkowski problem}

In view of Proposition \ref{denial of other possibilities}, in this subsection, we will provide the continuity, uniqueness, and existence of solutions to Problem
\ref{general-dual-polar} for $\varphi\in \cI$ and with the infimum considered.

The following lemma is very useful in later context. Its proof can be found in, e.g., the proof of  \cite[Theorem 3.2]{LuoYeZhu} (slight modification is needed) and hence is omitted.
\begin{lemma} \label{bounded-for-convergence--1}  Let $\varphi\in \cI$.  Let $\mu_i, \mu$ for $i\in \N$ be nonzero finite Borel measures on $\sphere$ which are not concentrated
on any closed hemisphere and $\mu_i\rightarrow \mu$ weakly. Suppose that $\{Q_i\}_{i\geq 1}$ is a sequence of convex bodies such that  $Q_i\in \cKon$ for each $i\in \N$ and $$ \sup_{i\geq 1} \bigg\{\int_{\sphere}\varphi(h_{Q_i}(u))d\mu_i(u)\bigg\}<\infty. $$ Then $\{Q_i\}_{i\geq 1}$ is a bounded sequence in $\cKon$.
\end{lemma}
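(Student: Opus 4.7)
The plan is to argue by contradiction, following the standard Aleksandrov-type strategy adapted to the weakly convergent family of measures. Suppose, for the sake of contradiction, that $\{Q_i\}_{i\ge 1}$ is unbounded. Passing to a subsequence (still indexed by $i$), one can find $v_i\in\sphere$ with
$$R_i:=h_{Q_i}(v_i)=\max_{u\in\sphere}h_{Q_i}(u)\longrightarrow\infty,$$
and by compactness of $\sphere$, a further subsequence yields $v_i\to v_0\in\sphere$. Because $o\in Q_i$ and $R_iv_i\in Q_i$, the segment $[o,R_iv_i]$ lies in $Q_i$, so
$$h_{Q_i}(u)\ \ge\ R_i\langle v_i,u\rangle_{+}\ \ \text{for every }u\in\sphere,$$
where $\langle\cdot,\cdot\rangle_{+}$ denotes the positive part.

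The next step is to locate a spherical cap around $v_0$ on which $\mu$ (and hence eventually $\mu_i$) has positive mass. Set $\eta_\delta:=\{u\in\sphere:\langle v_0,u\rangle>\delta\}$. Since $\mu$ is not concentrated on the closed hemisphere $\{u:\langle v_0,u\rangle\le 0\}$, we must have $\mu(\{u:\langle v_0,u\rangle>0\})>0$, and monotone convergence yields $\mu(\eta_\delta)>0$ for all sufficiently small $\delta>0$. Fix one such $\delta$. Because $\eta_\delta$ is open, the Portmanteau characterisation of weak convergence $\mu_i\to\mu$ gives $\liminf_{i\to\infty}\mu_i(\eta_\delta)\ge\mu(\eta_\delta)>0$. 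Moreover, the uniform convergence $v_i\to v_0$ ensures that $\langle v_i,u\rangle\ge\delta/2$ on $\eta_\delta$ once $i$ is large enough, whence $h_{Q_i}(u)\ge R_i\delta/2$ on $\eta_\delta$ for large $i$.

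Combining these facts with the monotonicity of $\varphi\in\cI$ produces, for all sufficiently large $i$,
$$\int_{\sphere}\varphi(h_{Q_i}(u))\,d\mu_i(u)\ \ge\ \int_{\eta_\delta}\varphi(R_i\delta/2)\,d\mu_i(u)\ =\ \varphi(R_i\delta/2)\,\mu_i(\eta_\delta).$$
Since $R_i\to\infty$ and $\varphi(\infty)=\infty$, while $\mu_i(\eta_\delta)$ stays bounded below by a positive constant, the right-hand side tends to $\infty$, contradicting the hypothesis $\sup_i\int_{\sphere}\varphi(h_{Q_i})\,d\mu_i<\infty$. Hence $\{Q_i\}_{i\ge 1}\subset R\ball$ for some $R>0$, as desired.

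The main obstacle is not any single estimate but the careful coordination of three limiting procedures: the diagonal extraction producing both $R_i\to\infty$ and $v_i\to v_0$; the transfer of positive $\mu$-mass on the open cap $\eta_\delta$ to uniformly positive $\mu_i$-mass via the Portmanteau theorem; and the use of the no-hemisphere hypothesis to guarantee $\mu(\eta_\delta)>0$ for small $\delta$. Using the open-set form of Portmanteau (rather than integration against continuous test functions) is what makes the estimate work with only the stated growth $\varphi(\infty)=\infty$ and no further regularity of $\varphi$.
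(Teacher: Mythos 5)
Your proof is correct, and the overall strategy matches the one the paper invokes for this lemma (sketched in the analogous Lemma \ref{bounded-for-convergence--norm-2-2} and attributed to the proof of Theorem 3.2 in \cite{LuoYeZhu}): assume a subsequence with $\max_u h_{Q_i}(u)=R_i\to\infty$ in directions $v_i\to v_0$, exploit $h_{Q_i}(u)\ge R_i\langle v_i,u\rangle_+$, and force the objective to blow up on a cap around $v_0$ where $\mu$ has positive mass because $\mu$ is not concentrated on a hemisphere. One remark on your opening step: $R_iv_i\in Q_i$ is correct, but it deserves a word of justification — the point $x_0\in Q_i$ realizing $h_{Q_i}(v_i)=\langle x_0,v_i\rangle$ satisfies $|x_0|\ge h_{Q_i}(v_i)=\max_u h_{Q_i}(u)=\max_{x\in Q_i}|x|\ge|x_0|$, so equality holds throughout and Cauchy--Schwarz forces $x_0=R_iv_i$; alternatively, one can just take $v_i$ to maximize $\rho_{Q_i}$ rather than $h_{Q_i}$, which is what the paper does.

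The genuine technical difference from the paper is the mechanism for pushing the lower bound through the weak limit $\mu_i\to\mu$. The paper introduces an auxiliary constant $C$, replaces $R_i$ by $C$ once $R_i\ge C$, tests $\mu_i$ against the continuous uniformly convergent functions $u\mapsto\varphi\bigl(C\langle u,v_i\rangle_+/\text{const}\bigr)$ via Lemma \ref{uniformly converge-lemma}, and sends $C\to\infty$ after the $i\to\infty$ limit — so it relies only on weak convergence applied to continuous test functions, the form of weak convergence the paper explicitly packages. You instead fix a cap $\eta_\delta$ and invoke the open-set half of the Portmanteau theorem to get $\liminf_i\mu_i(\eta_\delta)\ge\mu(\eta_\delta)>0$, which lets you run a single limit $R_i\to\infty$. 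Your route is a bit more direct and avoids the nested double limit, at the cost of using a slightly stronger-looking (but entirely standard) consequence of weak convergence that the paper never states. Both arguments require only $\varphi\in\cI$ and no extra regularity, and both are correct.
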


 The continuity of the extreme values for Problem
\ref{general-dual-polar} is given below.

\begin{theorem}\label{continuous theorem-1-1}
 Let $\mu_i, \mu$ for $i\in \N$ be finite Borel measures on $\sphere$ which are not concentrated
on any closed hemisphere and $\mu_i\rightarrow \mu$ weakly. Let
$G\in \cG_I$ be a continuous function such that (\ref{conditionforG}) holds for some $q\geq n-1$ and
$\varphi \in \mathcal{I}$. The following statements hold true.

\vskip 2mm \noindent i) If for each $i\in \N$, there exists $\widetilde{M}_i\in \cBt$ such that  \begin{equation}\label{Problem-1-28-1}
\int_{\sphere}\varphi\big(h_{\widetilde{M}_i}(u)\big)d\mu_i (u)=\inf\left\{ \int_{\sphere}\varphi(h_{Q}(u))d\mu_i(u): Q\in \cBt \right\},
 \end{equation}  then there exists  $\widetilde{M}\in \cBt$ such that  \begin{equation}\label{Problem-1-28-21}
\int_{\sphere}\varphi\big(h_{\widetilde{M}}(u)\big)d\mu (u)=\inf\left\{ \int_{\sphere}\varphi(h_{Q}(u))d\mu(u): Q\in \cBt \right\}.
 \end{equation} Moreover, \begin{equation} \label{limit-optimal-1-29} \lim_{ i\rightarrow \infty}\int_{\sphere}\varphi\big(h_{\widetilde{M}_i}(u)\big)d\mu_i (u)= \int_{\sphere}\varphi\big(h_{\widetilde{M}}(u)\big)d\mu (u).\end{equation}
 ii)  If for each $i\in \N$, there exists $\widehat{M}_i\in \cBh$ such that  \begin{equation*}
\int_{\sphere}\varphi\big(h_{\widehat{M}_i}(u)\big)d\mu_i (u)=\inf\left\{ \int_{\sphere}\varphi(h_{Q}(u))d\mu_i(u): Q\in \cBh \right\},
 \end{equation*}  then there exists  $\widehat{M}\in \cBh$ such that  \begin{equation*}
\int_{\sphere}\varphi\big(h_{\widehat{M}}(u)\big)d\mu (u)=\inf\left\{ \int_{\sphere}\varphi(h_{Q}(u))d\mu(u): Q\in \cBh \right\}.
 \end{equation*}  Moreover, $$\lim_{ i\rightarrow \infty}\int_{\sphere}\varphi\big(h_{\widehat{M}_i}(u)\big)d\mu_i (u)= \int_{\sphere}\varphi\big(h_{\widehat{M}}(u)\big)d\mu (u).$$\end{theorem}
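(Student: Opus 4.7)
The plan is to establish part (i) by a compactness/lower semi-continuity argument tailored to the setup of Problem \ref{general-dual-polar}, and then observe that part (ii) follows by exactly the same scheme with $\cBt$ replaced by $\cBh$ and the use of Proposition \ref{properties-G-hat} and Lemma \ref{lemmaforinterior} ii) in place of their $\cBt$-analogues.

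First I would show that the sequence of minimizers $\{\widetilde{M}_i\}_{i\geq 1}$ is bounded in $\cKon$. Since $\ball\in\cBt$ (because $\ball^{\circ}=\ball$, hence $\dveV(\ball^\circ)=\dveV(\ball)$), the minimality of $\widetilde{M}_i$ and the normalization $\varphi(1)=1$ give
\[
\int_{\sphere}\varphi(h_{\widetilde{M}_i}(u))\,d\mu_i(u)\ \leq\ \int_{\sphere}\varphi(h_{\ball}(u))\,d\mu_i(u)\ =\ \mu_i(\sphere).
\]
Weak convergence $\mu_i\rightarrow\mu$ yields $\mu_i(\sphere)\rightarrow\mu(\sphere)<\infty$, so the left-hand side is uniformly bounded in $i$. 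Lemma \ref{bounded-for-convergence--1} then forces $\{\widetilde{M}_i\}$ to lie in a bounded subset of $\cKon$.

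Next I would invoke Lemma \ref{lemmaforinterior} i): a subsequence (still denoted $\widetilde{M}_i$) converges in the Hausdorff metric to some $\widetilde{M}\in\cBt$, and in particular $o\in\mathrm{int}\,\widetilde{M}$ and $\dveV(\widetilde{M}^\circ)=\dveV(\ball)$. To verify that $\widetilde{M}$ is a minimizer for $\mu$, pick any competitor $Q\in\cBt$; then by minimality of each $\widetilde{M}_i$,
\[
\int_{\sphere}\varphi(h_{\widetilde{M}_i}(u))\,d\mu_i(u)\ \leq\ \int_{\sphere}\varphi(h_{Q}(u))\,d\mu_i(u).
\]
Since $\widetilde{M}_i\rightarrow\widetilde{M}$ in the Hausdorff metric and $\widetilde{M}\in\cKon$, the support functions $h_{\widetilde{M}_i}\rightarrow h_{\widetilde{M}}$ uniformly on $\sphere$ and stay between two positive constants; continuity of $\varphi$ on a compact subinterval of $(0,\infty)$ then gives $\varphi(h_{\widetilde{M}_i})\rightarrow\varphi(h_{\widetilde{M}})$ uniformly. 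On the right-hand side, $\varphi(h_Q)$ is a fixed continuous function on $\sphere$. Applying Lemma \ref{uniformly converge-lemma} to both sides of the displayed inequality yields
\[
\int_{\sphere}\varphi(h_{\widetilde{M}}(u))\,d\mu(u)\ \leq\ \int_{\sphere}\varphi(h_{Q}(u))\,d\mu(u),
\]
which proves (\ref{Problem-1-28-21}). The same application of Lemma \ref{uniformly converge-lemma} (this time with $Q=\widetilde{M}$) shows that the subsequential limit of the optimal values equals $\int_{\sphere}\varphi(h_{\widetilde{M}})\,d\mu$.

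Finally, to upgrade subsequential convergence in (\ref{limit-optimal-1-29}) to convergence of the whole sequence, I would use a standard subsubsequence argument: every subsequence of the full sequence of optimal values $\{\int\varphi(h_{\widetilde{M}_i})\,d\mu_i\}$ admits a further subsequence whose associated minimizers converge, by the argument above, to some $\widetilde{M}'\in\cBt$ which is again a minimizer for $\mu$. Since every minimizer of the $\mu$-problem realizes the same extremal value $\inf_{Q\in\cBt}\int\varphi(h_Q)\,d\mu$, each such subsequence of optimal values converges to this common number, which forces the full sequence to converge to it as well. The main technical obstacle is the first step, namely establishing boundedness of $\{\widetilde{M}_i\}$ together with nondegeneracy ($o$ stays in the interior of the limit); both hinge on the hypothesis (\ref{conditionforG}) entering through Lemma \ref{lemmaforinterior}, whereas the rest is a routine weak-convergence and uniform-approximation argument. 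Part (ii) proceeds identically, with $\ball\in\cBh$ guaranteeing the initial bound, Lemma \ref{lemmaforinterior} ii) giving the Hausdorff-limit minimizer in $\cBh$, and the continuity of $\hG(\cdot)$ from Proposition \ref{properties-G-hat} ensuring that the limit still satisfies $\hG(\widehat{M}^\circ)=\hG(\ball)$.
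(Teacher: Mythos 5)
Your proposal is correct and follows essentially the same route as the paper's proof: establish uniform boundedness of the optimal values via minimality against $\ball\in\cBt$ together with weak convergence of $\mu_i$, invoke Lemma \ref{bounded-for-convergence--1} to bound $\{\widetilde{M}_i\}$ and Lemma \ref{lemmaforinterior} to extract a Hausdorff-convergent subsequence with limit in $\cBt$, pass to the limit in the minimality inequality using Lemma \ref{uniformly converge-lemma}, and finally upgrade to full-sequence convergence of the optimal values by a subsubsequence argument. No gaps.
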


\begin{proof} For each $i\in \N$, let $$\mu_i(\sphere)=\int_{\sphere} \,d\mu_i\ \ \ \ \text{and} \ \ \ \int_{\sphere}\,d\mu=\mu(\sphere).$$
 i) It can be easily checked from (\ref{Problem-1-28-1})  and $\ball \in \cBt$ that for each $i\in \N$,
$$\int_{\sphere}\varphi\big(h_{\widetilde{M}_i}(u)\big)d\mu_i (u)\leq \varphi(1)\mu_i(\sphere).$$
Moreover, the weak convergence of $\mu_i\rightarrow \mu$ yields
$\mu_{i}(\sphere)\rightarrow \mu(\sphere)$. Hence, $$ \sup_{i\geq 1}
\bigg\{\int_{\sphere}\varphi(h_{\widetilde{M}_i}(u))d\mu_i(u)\bigg\}<\infty.
$$ By Lemma \ref{bounded-for-convergence--1}, one sees that
$\{\widetilde{M}_i\}_{i\geq 1}$ is a bounded sequence in $\cKon$. As
$\widetilde{M}_i\in \cBt$ for each $i\in \N$, Lemma
\ref{lemmaforinterior} implies that there exist a subsequence
$\{\widetilde{M}_{i_j}\}_{j\geq 1}$ of $\{\widetilde{M}_i\}_{i\geq
1}$ and a convex body $\widetilde{M}\in \cBt$ such that
$\widetilde{M}_{i_j}\rightarrow \widetilde{M}.$

Now we verify that $\widetilde{M}$ satisfies the desired properties.
First of all, for any given $Q\in \cBt$, one has, for each $j\in
\N$,
$$\int_{\sphere}\varphi\big(h_{\widetilde{M}_{i_j}}(u)\big)d\mu_{i_j}
(u) \leq \int_{\sphere}\varphi\big(h_{Q}(u)\big)d\mu_{i_j} (u).$$
Together with the weak convergence of $\mu_i\rightarrow \mu$, Lemma \ref{uniformly converge-lemma},
$\varphi\in \cI$, and $\widetilde{M}_{i_j}\rightarrow
\widetilde{M}$, one obtains that
$\varphi\big(h_{\widetilde{M}_{i_j}}\big)\rightarrow
\varphi(h_{\widetilde{M}})$ uniformly on $\sphere$ and  for each
given $Q\in \cBt$, \begin{eqnarray*}
\int_{\sphere}\varphi\big(h_{\widetilde{M}}(u)\big)d\mu (u) &=&
\lim_{ j\rightarrow
\infty}\int_{\sphere}\varphi\big(h_{\widetilde{M}_{i_j}}(u)\big)d\mu_{i_j}
(u)\\ &\leq &   \lim_{ j\rightarrow
\infty}\int_{\sphere}\varphi\big(h_{Q}(u)\big)d\mu_{i_j} (u)\\ &=&
\int_{\sphere}\varphi\big(h_{Q}(u)\big)d\mu (u). \end{eqnarray*}
Taking the infimum over $Q\in \cBt$ and together with
$\widetilde{M}\in \cBt$, one gets that \begin{eqnarray*}
\int_{\sphere}\varphi\big(h_{\widetilde{M}}(u)\big)d\mu (u) \leq
\inf_{Q\in \cBt} \bigg\{ \int_{\sphere}\varphi\big(h_{Q}(u)\big)d\mu
(u)\bigg\} \leq
\int_{\sphere}\varphi\big(h_{\widetilde{M}}(u)\big)d\mu (u).
\end{eqnarray*} Hence, $\widetilde{M}\in \cBt$ verifies
(\ref{Problem-1-28-21}).

Now let us verify (\ref{limit-optimal-1-29}). To this end, let $\{\mu_{i_k}\}_{k\geq 1}$ be an arbitrary subsequence of $\{\mu_i\}_{i\geq 1}$. Repeating the arguments  above for $\mu_{i_k}$ and $\widetilde{M}_{i_k}$ (replacing $\mu_i$ and $\widetilde{M}_{i}$, respectively),  one gets a subsequence $\{\widetilde{M}_{i_{k_j}}\}_{j\geq 1}$ of $\{\widetilde{M}_{i_k}\}_{k\geq 1}$ such that $\widetilde{M}_{i_{k_j}}\rightarrow \widetilde{M}_0\in \cBt$  and $\widetilde{M}_0$ satisfies (\ref{Problem-1-28-21}). Thus, \begin{eqnarray*}
\lim_{j\rightarrow\infty} \int_{\sphere}\varphi\Big(h_{\widetilde{M}_{i_{k_j}}}(u)\Big)d\mu_{i_{k_j}} (u)&=&
\int_{\sphere}\varphi\big(h_{\widetilde{M}_0}(u)\big)d\mu (u)\\&=&\inf\left\{ \int_{\sphere}\varphi(h_{Q}(u))d\mu(u): Q\in \cBt \right\}\\&=&\int_{\sphere}\varphi\big(h_{\widetilde{M}}(u)\big)d\mu (u),\end{eqnarray*} where the first equality  follows from Lemma \ref{uniformly converge-lemma} and the last two equalities follow from (\ref{Problem-1-28-21}). This concludes the proof of (\ref{limit-optimal-1-29}), i.e.,
$$ \lim_{ i\rightarrow \infty}\int_{\sphere}\varphi\big(h_{\widetilde{M}_i}(u)\big)d\mu_i (u)= \int_{\sphere}\varphi\big(h_{\widetilde{M}}(u)\big)d\mu (u).$$

\noindent ii) The proof of this case is almost identical to the one in i), and will be omitted.
\end{proof}

The following theorem provides the existence and uniqueness of solutions to  Problem
\ref{general-dual-polar}  for $\varphi\in \cI$ and with the infimum considered.

\begin{theorem}\label{polardualorlicz-1-29-1}
Let $\varphi\in\mathcal{I}$ and
$\mu$  be  a  nonzero finite Borel measure defined on $\sphere$ which
is not concentrated on any closed hemisphere. Let
$G\in \cG_I$ be a continuous function such that (\ref{conditionforG}) holds for some $q\geq n-1$.  Then the following statements hold.

\vskip 2mm \noindent  i) There exists a convex body $\widetilde{M}\in \cBt$ such that  \begin{equation}\label{Problem-1-29-21}
\int_{\sphere}\varphi\big(h_{\widetilde{M}}(u)\big)d\mu (u)=\inf\left\{ \int_{\sphere}\varphi(h_{Q}(u))d\mu(u): Q\in \cBt \right\}.
 \end{equation} If,  in addition, both $\varphi(t)$ and $G(t,\cdot)$ are convex on $t\in (0, \infty)$, then the solution is unique.

 \vskip 2mm  \noindent  ii) There exists a convex body $\widehat{M}\in \cBh$ such that  \begin{equation*}
\int_{\sphere}\varphi\big(h_{\widehat{M}}(u)\big)d\mu (u)=\inf\left\{ \int_{\sphere}\varphi(h_{Q}(u))d\mu(u): Q\in \cBh\right\}.
 \end{equation*}  If,  in addition, both $\varphi(t)$ and $G(t,\cdot)$ are convex on $t\in (0, \infty)$, then the solution is unique.\end{theorem}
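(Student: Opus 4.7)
The strategy is to approximate $\mu$ weakly by discrete measures $\mu_i$ whose finite supports are not concentrated on any closed hemisphere. This is standard: partition $\sphere$ into small Borel pieces $E_{i,1},\dots,E_{i,m_i}$ of diameter tending to $0$, pick $u_{i,j}\in E_{i,j}$, and set $\mu_i=\sum_j \mu(E_{i,j})\delta_{u_{i,j}}$; since $\mu$ itself is not concentrated on any closed hemisphere, neither are the $\mu_i$ for $i$ large. Theorem \ref{polardualorlicz1} furnishes polytopal minimizers $\widetilde{M}_i\in\cBt$ (resp.\ $\widehat{M}_i\in\cBh$) for each $\mu_i$, and Theorem \ref{continuous theorem-1-1} applies directly to produce the desired minimizer for $\mu$.

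\textbf{Uniqueness in i).} Suppose $M_1,M_2\in\cBt$ are two minimizers with $M_1\neq M_2$. For a fixed $\lambda\in(0,1)$ define $Q_\lambda\in\cKon$ by $h_{Q_\lambda}=(1-\lambda)h_{M_1}+\lambda h_{M_2}$. Using $\rho_{K^\circ}(u)=1/h_K(u)$ for $K\in\cKon$ and the strict convexity of $t\mapsto 1/t$ on $(0,\infty)$,
\[
\rho_{Q_\lambda^\circ}(u)\ \leq\ (1-\lambda)\rho_{M_1^\circ}(u)+\lambda\rho_{M_2^\circ}(u),
\]
with strict inequality on the open set $\{u\in\sphere:h_{M_1}(u)\neq h_{M_2}(u)\}$, which is nonempty (because $M_1\neq M_2$ and support functions are continuous) and hence has positive spherical measure. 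Since $G\in\cG_I$ is strictly increasing and convex in $t$, integration gives
\[
\dveV(Q_\lambda^\circ)\ <\ (1-\lambda)\dveV(M_1^\circ)+\lambda\dveV(M_2^\circ)\ =\ \dveV(\ball).
\]
Since $r\mapsto\dveV(rQ_\lambda^\circ)$ is continuous and strictly increasing on $(0,\infty)$ with range $(0,\infty)$ (again because $G\in\cG_I$), there is a unique $s>1$ with $\dveV(sQ_\lambda^\circ)=\dveV(\ball)$, i.e.\ $Q_\lambda/s\in\cBt$. Because $h_{Q_\lambda/s}=h_{Q_\lambda}/s<h_{Q_\lambda}$ pointwise, $\varphi$ being strictly increasing and convex yields
\[
\int_{\sphere}\varphi(h_{Q_\lambda/s})\,d\mu\ <\ \int_{\sphere}\varphi(h_{Q_\lambda})\,d\mu\ \leq\ (1-\lambda)\!\int_{\sphere}\!\varphi(h_{M_1})\,d\mu+\lambda\!\int_{\sphere}\!\varphi(h_{M_2})\,d\mu,
\]
whose right side equals the common infimum, contradicting $Q_\lambda/s\in\cBt$.

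\textbf{Uniqueness in ii) and main difficulty.} The argument for $\cBh$ is parallel: set $\eta_0=\hG(\ball)$ and apply the same three convexity/monotonicity inputs to the integrand $G(\cdot/\eta_0,u)$, yielding $\int_{\sphere}G(\rho_{Q_\lambda^\circ}(u)/\eta_0,u)\,du<1$; since this integral is strictly decreasing in the scaling parameter by Proposition \ref{equivalent-form-1-23}, one concludes $\hG(Q_\lambda^\circ)<\hG(\ball)$. Homogeneity of $\hG$ (Proposition \ref{properties-G-hat}) then supplies $s>1$ with $Q_\lambda/s\in\cBh$, and the same contradiction closes the argument. The only delicate point throughout is propagating strictness along the chain of inequalities (convexity of $1/t$, then monotonicity and convexity of $G(\cdot,u)$, then monotonicity and convexity of $\varphi$), so that the final displayed inequality is strict even though $\varphi$ and $G(\cdot,u)$ are assumed only convex; this is secured by the fact that the open set on which $h_{M_1}\neq h_{M_2}$ has positive spherical measure, which is enough to activate the strict monotonicity of $G$ (once) and of $\varphi$ (once) in the relevant spots.
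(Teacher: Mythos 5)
Your proposal is correct and takes essentially the same approach as the paper: existence via discrete approximation together with Theorems \ref{polardualorlicz1} and \ref{continuous theorem-1-1}, and uniqueness via a Minkowski average of the two alleged minimizers combined with convexity and monotonicity. The only cosmetic difference is that the paper fixes $\lambda=1/2$ and derives a non-strict chain of inequalities, then concludes $t_2=1$ and backward-propagates equality through the chain (turning the harmonic-mean-$\leq$-arithmetic-mean step into an equality to force $\rho_{\widetilde{M}^\circ}=\rho_{\widetilde{M}_0^\circ}$), whereas you bake strictness in from the start by invoking strict convexity of $t\mapsto 1/t$ on the positive-measure set where $h_{M_1}\neq h_{M_2}$, which lets you derive the contradiction directly.
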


\begin{proof} Let $\mu$ be  a  nonzero finite Borel measure defined on $\sphere$ which
is not concentrated on any closed hemisphere. Let $\mu_i$  for all $i\in \N$ be nonzero finite discrete Borel measures
defined on $\sphere$, which are not
concentrated on any closed hemisphere,  such that,  $\mu_i\rightarrow
\mu$ weakly  (see e.g., \cite{Sch}).

\vskip 2mm \noindent i) By Theorem \ref{polardualorlicz1}, for each
$i\in \N$, there exists a polytope $\widetilde{P}_i\in\cBt$ solving
(\ref{Problem-1-29-21}) with $\mu$ replaced by $\mu_i$. It  follows
from Theorem \ref{continuous theorem-1-1} that there exists a
$\widetilde{M}\in \cBt$ such that (\ref{Problem-1-29-21}) holds.

Now let us prove the uniqueness. Assume that $\widetilde{M}\in \cBt$ and $\widetilde{M}_0\in\cBt$,  such that $$\int_{\sphere}\varphi\big(h_{\widetilde{M}}(u)\big)d\mu (u)=\int_{\sphere}\varphi\big(h_{\widetilde{M}_0}(u)\big)d\mu (u)=\inf\left\{ \int_{\sphere}\varphi(h_{Q}(u))d\mu(u): Q\in \cBt \right\}.
$$
Note that both $\widetilde{M}\in \cKon$ and
$\widetilde{M}_0\in\cKon$.  Let $K_0=\frac{\widetilde{M}+\widetilde{M}_0}{2}\in \cKon$.  Then,
$$h_{K_0}=\frac{h_{\widetilde{M}}+h_{\widetilde{M}_0}}{2}  \Longrightarrow\   \rho_{K_0^\circ} =2\cdot \frac { \rho_{\widetilde{M}^\circ}\cdot \rho_{\widetilde{M}_0^\circ}}{
\rho_{\widetilde{M}^\circ}+ \rho_{\widetilde{M}_0^\circ}},$$
following from $h_K\cdot
\rho_{K^\circ}=1$ for all $K\in \cKon$. The facts
that $G(t, \cdot)$ is convex and $G\in \cG_I$  is strictly increasing, together with
$\widetilde{M}\in \cBt$ and $\widetilde{M}_0\in\cBt$,   yield that
\begin{eqnarray}
\dveV(K_0^\circ) &=& \int_{\sphere}G\big(\rho_{K_0^\circ}(u), u\big)\,du \nonumber \\ &\leq& \int_{\sphere}
G\bigg(2\cdot \frac { \rho_{\widetilde{M}^\circ}(u)\cdot \rho_{\widetilde{M}_0^\circ}(u)}{
\rho_{\widetilde{M}^\circ}(u)+ \rho_{\widetilde{M}_0^\circ}(u)},\ u\bigg)\,du  \nonumber \\ &\leq& \int_{\sphere}
G\bigg(\frac{ \rho_{\widetilde{M}^\circ}(u)+
\rho_{\widetilde{M}_0^\circ}(u)}{2}, u\bigg)\,du  \nonumber  \\ &\leq&
\int_{\sphere}\frac{G\big(\rho_{\widetilde{M}^\circ}(u), u\big)+G\big(\rho_{\widetilde{M}_0^\circ}(u), u\big)}{2}\,du  \nonumber  \\ &=& \frac{\dveV(\widetilde{M}^\circ)+\dveV(\widetilde{M}^\circ_0)}{2} =\dveV(\ball). \label{equality-03-03-19}
\end{eqnarray}  Again, as $G\in \cG_I$, one
can find a constant $t_2\geq 1$ such that $\dveV(t_2K_0^\circ)
=\dveV(\ball)$ and $K_0/t_2\in
\cBt$. Due to $t_2\geq 1$ and the facts that $\varphi\in\mathcal{I}$
is convex and strictly increasing, one has
 \begin{eqnarray}
 \int_{\sphere} \varphi(h_{K_0/t_2}(u))\,d\mu(u)
  &\geq &  \inf\left\{ \int_{\sphere}\varphi(h_{Q}(u))d\mu(u): Q\in \cBt \right\} \nonumber \\ &=&
\frac{1}{2}\bigg( \int_{\sphere}\varphi\big(h_{\widetilde{M}}(u)\big)d\mu (u)+\int_{\sphere}\varphi\big(h_{\widetilde{M}_0}(u)\big)d\mu (u)\bigg) \nonumber \\
  &\geq& \int_{\sphere}\varphi\bigg(\frac{h_{\widetilde{M}}(u)+h_{\widetilde{M}_0}(u)}{2}\bigg)d\mu(u) \nonumber \\
  &=&  \int_{\sphere}\varphi(h_{K_0}(u))\,d\mu(u) \nonumber \\ &\geq& \int_{\sphere}\varphi(h_{K_0/t_2}(u))\,d\mu(u). \label{equality-03-03-19-1}
 \end{eqnarray}
Hence  all ``$\geq$" in (\ref{equality-03-03-19-1}) become ``$=$"; and this can happen if and only
if $t_2=1$ as $\varphi$ is strictly increasing. This in turn yields that all ``$\geq$"  in (\ref{equality-03-03-19}) become ``$=$" as well. In particular, as $G(t, \cdot)$ is strictly increasing,   for all $u\in \sphere$,
$$2\cdot \frac { \rho_{\widetilde{M}^\circ}(u) \cdot \rho_{\widetilde{M}_0^\circ}(u)}{
\rho_{\widetilde{M}^\circ}(u)+ \rho_{\widetilde{M}_0^\circ}(u)}=\frac{\rho_{\widetilde{M}^\circ}(u)+
\rho_{\widetilde{M}_0^\circ}(u)}{2}$$ and hence $\rho_{\widetilde{M}^\circ}(u)= \rho_{\widetilde{M}_0^\circ}(u)$ for all $u\in \sphere$. That is,
$\widetilde{M}=\widetilde{M}_0$ and the  uniqueness follows.

\vskip 2mm \noindent ii) The proof of this case is almost identical to the one in i), and will be omitted.
\end{proof}

  The following result states that the continuity of solutions to  Problem
\ref{general-dual-polar} for $\varphi\in \cI$ and with the infimum considered.

\begin{corollary}\label{con-4-12-19}
 Let $\mu_i, \mu$ for $i\in \N$ be nonzero finite Borel measures on $\sphere$ which are not concentrated
on any closed hemisphere and $\mu_i\rightarrow \mu$ weakly. Let
$G\in \cG_I$ be a continuous function such that $G(t, \cdot)$ is convex on $t\in (0, \infty)$ and (\ref{conditionforG}) holds for some $q\geq n-1$.  Let
$\varphi \in \mathcal{I}$ be convex. The following statements hold true.

\vskip 2mm \noindent i) Let $\widetilde{M}_i\in \cBt$ for each $i\in\N$ and $\widetilde{M}\in \cBt$ be the solutions to the optimization problem (\ref{Problem-1-1-1-1}) with the infimum considered for measures $\mu_i$ and $\mu$, respectively. Then $\widetilde{M}_i\rightarrow \widetilde{M}$ as $i\rightarrow \infty$.

\vskip 2mm \noindent ii) Let $\widehat{M}_i\in \cBh$ for each $i\in\N$ and $\widehat{M}\in \cBh$ be the solutions to the optimization problem (\ref{Problem-1-1-2-2}) with the infimum considered for measures $\mu_i$ and $\mu$, respectively. Then $\widehat{M}_i\rightarrow \widehat{M}$ as $i\rightarrow \infty$.
\end{corollary}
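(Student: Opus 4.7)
The plan is to reduce this continuity statement to the uniqueness half of Theorem \ref{polardualorlicz-1-29-1} together with the compactness/selection machinery already established in Theorem \ref{continuous theorem-1-1}, via a standard ``every subsequence has a further subsequence'' argument.

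First I would handle part i). Under the convexity assumptions on $\varphi$ and $G(t,\cdot)$, Theorem \ref{polardualorlicz-1-29-1} i) guarantees that the minimizer $\widetilde{M} \in \cBt$ for $\mu$ is unique. Suppose, for contradiction, that $\widetilde{M}_i \not\to \widetilde{M}$ in the Hausdorff metric. Then there exist $\varepsilon_0 > 0$ and a subsequence $\{\widetilde{M}_{i_k}\}_{k \geq 1}$ with $d_H(\widetilde{M}_{i_k}, \widetilde{M}) \geq \varepsilon_0$ for all $k$. Using $\ball \in \cBt$ together with the fact that each $\widetilde{M}_{i_k}$ is optimal for $\mu_{i_k}$, one has $\int_{\sphere} \varphi(h_{\widetilde{M}_{i_k}}(u))\,d\mu_{i_k}(u) \leq \varphi(1)\mu_{i_k}(\sphere)$, and since $\mu_{i_k}(\sphere) \to \mu(\sphere)$ the right side is uniformly bounded in $k$. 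By Lemma \ref{bounded-for-convergence--1}, the sequence $\{\widetilde{M}_{i_k}\}$ is bounded in $\cKon$.

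Next I would invoke Lemma \ref{lemmaforinterior} i): since $\{\widetilde{M}_{i_k}\} \subset \cBt$ is bounded, we can extract a further subsequence (still labeled $\{\widetilde{M}_{i_k}\}$ for convenience) converging in the Hausdorff metric to some $\widetilde{M}' \in \cBt$. The argument in the proof of Theorem \ref{continuous theorem-1-1} i) then applies verbatim to this subsequence: for any $Q \in \cBt$, the minimality of $\widetilde{M}_{i_k}$ for $\mu_{i_k}$ combined with the uniform convergence $\varphi(h_{\widetilde{M}_{i_k}}) \to \varphi(h_{\widetilde{M}'})$ on $\sphere$ and Lemma \ref{uniformly converge-lemma} (applied to both sides) yields $\int_{\sphere} \varphi(h_{\widetilde{M}'}(u))\,d\mu(u) \leq \int_{\sphere} \varphi(h_Q(u))\,d\mu(u)$. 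Hence $\widetilde{M}'$ is a minimizer for $\mu$. By the uniqueness in Theorem \ref{polardualorlicz-1-29-1} i), $\widetilde{M}' = \widetilde{M}$, which contradicts $d_H(\widetilde{M}_{i_k}, \widetilde{M}) \geq \varepsilon_0$. Therefore $\widetilde{M}_i \to \widetilde{M}$.

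The argument for ii) is identical in structure: replace $\cBt$ by $\cBh$ throughout, use Lemma \ref{lemmaforinterior} ii) to extract a convergent subsequence landing in $\cBh$, and invoke the uniqueness half of Theorem \ref{polardualorlicz-1-29-1} ii). No step here is a genuine obstacle; the only mild subtlety is making sure that the extracted limit $\widetilde{M}'$ (respectively $\widehat{M}'$) really satisfies the constraint $\dveV((\widetilde{M}')^\circ) = \dveV(\ball)$ (respectively $\hG((\widehat{M}')^\circ) = \hG(\ball)$) after passing to the limit, but this is precisely the content of Lemma \ref{lemmaforinterior}, which is why that lemma was set up for exactly these two constraint families.
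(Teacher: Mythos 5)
Your proof is correct and follows essentially the same route as the paper: extract a subsequence, bound it via Lemma \ref{bounded-for-convergence--1}, apply Lemma \ref{lemmaforinterior} to get a convergent further subsequence whose limit lies in $\cBt$ (or $\cBh$), show the limit is a minimizer for $\mu$ as in Theorem \ref{continuous theorem-1-1}, and invoke the uniqueness from Theorem \ref{polardualorlicz-1-29-1}. The paper phrases this as a direct ``every subsequence has a further subsequence converging to $\widetilde{M}$'' argument rather than a proof by contradiction, but that is only a stylistic difference.
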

\begin{proof}
i) The proof of this result follows from the combination of the
proof of Theorem \ref{continuous theorem-1-1} and the uniqueness in
Theorem \ref{polardualorlicz-1-29-1}. Indeed, let
$\{\widetilde{M}_{i_k}\}_{k\geq 1}$ be an arbitrary subsequence of
$\{\widetilde{M}_i\}_{i\geq 1}$. Like in  the proof of Theorem
\ref{continuous theorem-1-1}, one can check that   there exist a
subsequence $\{\widetilde{M}_{i_{k_j}}\}_{j\geq 1}$ of
$\{\widetilde{M}_{i_k}\}_{k\geq 1}$ and a convex body
$\widetilde{M}_0\in \cBt$ such that
$\widetilde{M}_{i_{k_j}}\rightarrow \widetilde{M}_0.$ Moreover,
$\widetilde{M}_0$ satisfies that
$$\int_{\sphere}\varphi\big(h_{\widetilde{M}_0}(u)\big)d\mu
(u)=\inf\left\{ \int_{\sphere}\varphi(h_{Q}(u))d\mu(u): Q\in \cBt
\right\}.$$ The uniqueness in Theorem \ref{polardualorlicz-1-29-1}
yields $\widetilde{M}_0=\widetilde{M}.$

In other words, one shows that every subsequence $\{M_{i_k}\}_{k\geq 1}$ of $\{M_i\}_{i\geq 1}$ must have a subsequence $\widetilde{M}_{i_{k_j}}$ convergent to $\widetilde{M}.$ This concludes that $\widetilde{M}_{i}\rightarrow \widetilde{M}.$

\vskip 2mm \noindent ii) The proof of this case is almost identical to the one in i), and will be omitted.
\end{proof}

 \section{Variations of the general dual-polar Orlicz-Minkowski problem}\label{Section:5}

 Problem \ref{general-dual-polar} discussed in Section \ref{section-4} are only typical examples of the polar Orlicz-Minkowski type problems. In this section, several variations of Problem
\ref{general-dual-polar} will be provided.

\subsection{The general dual-polar Orlicz-Minkowski problem associated with the  Orlicz norms} \label{section 5-1}
Let $\mu$ be a given nonzero finite Borel measure defined on $\sphere$.  For  $\varphi\in \cI\cup \cD$ and for $Q\in \cKon$,  the functional $\int_{\sphere}\varphi(h_Q)\,d\mu$ is in general not homogeneous.  However, like the definition for $\hG(\cdot)$, one can define a homogeneous functional for $Q\in \cKon$ as follows: \begin{eqnarray*}
\|h_Q\|_{\mu,\varphi}&=&\inf\bigg\{\lambda>0:~\frac{1}{\mu{(\sphere)}}
\int_{\sphere}\varphi\left(\frac{h_Q(u)}{\lambda}\bigg)\,d\mu(u)\leq
1\right\} \ \ \ \text{if} \ \ \varphi\in\mathcal{I}; \\
\|h_Q\|_{\mu,\varphi}&=&\inf\bigg\{\lambda>0:~\frac{1}{\mu{(\sphere)}}
\int_{\sphere}\varphi\left(\frac{h_Q(u)}{\lambda}\bigg)\,d\mu(u)\geq
1\right\} \ \ \ \text{if} \ \ \varphi\in\mathcal{D}.
 \end{eqnarray*} For convenience, $\|h_Q\|_{\mu,\varphi}$ is called  the ``Orlicz norm" of $h_Q$, although in general it may not satisfy the triangle inequality.   Following the proof of Proposition \ref{equivalent-form-1-23}, it can be checked that, for any $Q\in \cKon$ and $\varphi\in \cI\cup\cD$, $\|h_Q\|_{\mu,\varphi}>0$ satisfies  \begin{equation}\label{propertyfornorm}
\frac{1}{\mu{(\sphere)}}\int_{\sphere}\varphi\left(\frac{h_Q(u)}{\|h_Q\|_{\mu,\varphi}}\right)\,d\mu=1.
\end{equation} Moreover,  $\|1\|_{\mu,\varphi}=1$, $\|ch_Q\|_{\mu,\varphi}=c\|h_Q\|_{\mu,\varphi}$
for any constant $c>0$ and for any $Q\in \cKon$, and
$\|h_Q\|_{\mu,\varphi}\leq\|h_L\|_{\mu,\varphi}$ for $Q, L\in \cKon$ such that $Q\subseteq L.$

The following lemma for $\varphi\in \cI\cup\cD$ can be proved similar to the proof of Proposition \ref{properties-G-hat} ii). For completeness, we provide a brief proof here.  See e.g.,  \cite[Lemma  4]{HLYZ-2010} and \cite[Lemma 3.4 and Corollary 3.5]{Huanghe} for similar results.

\begin{lemma}\label{convergneceofmeasure}
 Let $Q_i, Q\in \cKon$  for each $i\in \N$, and  $\mu_i, \mu$  for each $i\in \N$ be nonzero finite Borel measures on $\sphere$.  If     $Q_i\rightarrow Q$ and $\mu_i\rightarrow \mu$ weakly, then for all $\varphi\in \cI\cup\cD$,
    $$\lim_{i\rightarrow \infty} \|h_{Q_i}\|_{\mu_i,\varphi}=\|h_{Q}\|_{\mu,\varphi}.$$
\end{lemma}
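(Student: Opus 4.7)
The plan is to mimic the structure of the proof of Proposition \ref{properties-G-hat} ii). Let $\varphi \in \cI$; the case $\varphi \in \cD$ follows along identical lines with obvious sign changes in the monotonicity sandwich. Since $Q_i \to Q$ in the Hausdorff metric with $Q \in \cKon$, the support functions $h_{Q_i}$ converge uniformly to $h_Q$ on $\sphere$, and there exist constants $0 < r \le R < \infty$ such that $r \le h_{Q_i}(u), h_Q(u) \le R$ for all $u \in \sphere$ and all sufficiently large $i$. Weak convergence of $\mu_i \to \mu$ also gives $\mu_i(\sphere) \to \mu(\sphere) \in (0, \infty)$.

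Next, writing $\lambda_i = \|h_{Q_i}\|_{\mu_i,\varphi}$ for brevity, I would invoke the characterization (\ref{propertyfornorm}) together with the monotonicity of $\varphi$ to obtain the sandwich
\[
\varphi(r/\lambda_i) \ \le\ \frac{1}{\mu_i(\sphere)}\int_{\sphere} \varphi\bigl(h_{Q_i}(u)/\lambda_i\bigr)\, d\mu_i(u) \ =\ 1 \ \le\ \varphi(R/\lambda_i).
\]
Since $\varphi(0^+)=0$ and $\varphi(\infty)=\infty$, this forces $\{\lambda_i\}$ to lie in a compact subinterval $[a,A] \subset (0,\infty)$.

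For the identification of the limit, let $\lambda_0$ be any accumulation point of $\{\lambda_i\}$ and pass to a subsequence (still denoted $\lambda_i$) with $\lambda_i \to \lambda_0 \in [a,A]$. Then $h_{Q_i}/\lambda_i \to h_Q/\lambda_0$ uniformly on $\sphere$, with values all lying in the compact set $[r/A, R/a] \subset (0,\infty)$, on which the continuous function $\varphi$ is uniformly continuous; hence $\varphi(h_{Q_i}/\lambda_i) \to \varphi(h_Q/\lambda_0)$ uniformly on $\sphere$. Applying Lemma \ref{uniformly converge-lemma} and dividing by $\mu_i(\sphere) \to \mu(\sphere)$, I obtain
\[
\frac{1}{\mu(\sphere)}\int_{\sphere} \varphi\bigl(h_Q(u)/\lambda_0\bigr)\, d\mu(u) \ =\ 1.
\]
The uniqueness of the positive solution of this equation, proved exactly as in Proposition \ref{equivalent-form-1-23} (strict monotonicity of $\varphi$ makes the left-hand side strictly monotone in $\lambda_0$), yields $\lambda_0 = \|h_Q\|_{\mu,\varphi}$. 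Since every accumulation point of the bounded sequence $\{\lambda_i\}$ equals $\|h_Q\|_{\mu,\varphi}$, the whole sequence converges to this common value, completing the proof.

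The principal obstacle is the a priori control $\lambda_i \in [a,A]$: without it the candidate limit $\lambda_0$ could be $0$ or $\infty$, in which case the ratios $h_{Q_i}/\lambda_i$ would escape every compact subset of $(0,\infty)$ and uniform continuity of $\varphi$ would fail, blocking the passage to the limit under the integral. The sandwich above, which plays the same role that the Fatou/dominated-convergence pair plays in Proposition \ref{properties-G-hat} ii), is precisely what delivers this control.
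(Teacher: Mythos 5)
Your proof is correct and follows essentially the same strategy as the paper's: establish the uniform bounds $r\le h_{Q_i}\le R$, use the defining equation (\ref{propertyfornorm}) together with monotonicity of $\varphi$ to trap $\|h_{Q_i}\|_{\mu_i,\varphi}$ in a compact subinterval of $(0,\infty)$, pass to the limit using uniform convergence and Lemma~\ref{uniformly converge-lemma}, and conclude by uniqueness (equivalently, strict monotonicity). The only cosmetic difference is that you identify every accumulation point with $\|h_Q\|_{\mu,\varphi}$ via the uniqueness in Proposition~\ref{equivalent-form-1-23}, whereas the paper argues by contradiction from $\limsup > \|h_Q\|_{\mu,\varphi}$ (and symmetrically for $\liminf$) using strict monotonicity of $\varphi$ directly; both rest on the same ingredients.
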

\begin{proof}
We only prove the case for $\varphi\in \cI$ (and the case for
$\varphi\in \cD$ follows along the same lines). Let  $Q_i\in \cKon$
for all $i\in \N$ and $Q_i\rightarrow Q\in \cKon$.  Let the
constants $0<r_Q<R_Q<\infty$ be such that $r_Q\leq h_Q\leq R_Q$ and
$r_Q\leq h_{Q_i}\leq R_Q$ for all $ i\in \N. $  It can be checked
that $$r_Q\leq \ \inf_{i\geq 1} \|h_{Q_i}\|_{\mu_i,\varphi}\ \leq \
\sup_{i\geq 1} \|h_{Q_i}\|_{\mu_i,\varphi}\ \leq R_Q. $$ Assume that
$\limsup_{i\rightarrow \infty}
\|h_{Q_i}\|_{\mu_i,\varphi}>\|h_{Q}\|_{\mu,\varphi}$. There exists a
subsequence $\{Q_{i_j}\}$ of $\{Q_i\}$ such that $\lim_{j\rightarrow
\infty}
\|h_{Q_{i_j}}\|_{\mu_{i_j},\varphi}>\|h_{Q}\|_{\mu,\varphi}.$
Together with (\ref{propertyfornorm}), Lemma \ref{uniformly
converge-lemma}, the uniform convergence of  $h_{Q_i}\rightarrow
h_Q$ on $\sphere$, and the weak convergence of $\mu_i\rightarrow
\mu$,  one has \begin{eqnarray*}
 1&=& \lim_{j\rightarrow \infty} \frac{1}{\mu_{i_j}{(\sphere)}}\int_{\sphere}\varphi\left(\frac{h_{Q_{i_j}}(u)}{\|h_{Q_{i_j}}\|_{\mu_{i_j},\varphi}}\right)\,d\mu_{i_j} \\ &=& \frac{1}{\mu{(\sphere)}}\int_{\sphere}\varphi\left(\frac{h_Q(u)}{\lim_{j\rightarrow \infty} \|h_{Q_{i_j}}\|_{\mu_{i_j},\varphi}}\right)\,d\mu \\  &<&\frac{1}{\mu{(\sphere)}}\int_{\sphere}\varphi\left(\frac{h_Q(u)}{\|h_Q\|_{\mu,\varphi}}\right)\,d\mu=1.
\end{eqnarray*}
This is a contradiction and hence $\limsup_{i\rightarrow \infty} \|h_{Q_i}\|_{\mu_i,\varphi}\leq \|h_{Q}\|_{\mu,\varphi}$.   Similarly, $\liminf_{i\rightarrow \infty} \|h_{Q_i}\|_{\mu_i,\varphi}\geq \|h_{Q}\|_{\mu,\varphi}$  also holds, which leads to
$\lim_{i\rightarrow \infty} \|h_{Q_i}\|_{\mu_i,\varphi}=\|h_{Q}\|_{\mu,\varphi}$ as desired. \end{proof}

For the convenience of later citation,  the following lemma is
given, whose proof for polytopes and discrete
measures has appeared in e.g.,
\cite{GHXY,HongYeZhang-2017,Huanghe} and is similar to the proof of Lemma \ref{bounded-for-convergence--1}. A
brief sketch of the proof is provided for completeness and for future reference.

\begin{lemma} \label{bounded-for-convergence--norm-2-2} Let $\varphi\in \cI$.  Let $\mu_i, \mu$ for $i\in \N$ be nonzero finite Borel measures on $\sphere$ which are not concentrated
on any closed hemisphere and $\mu_i\rightarrow \mu$ weakly. Suppose that $\{Q_i\}_{i\geq 1}$ is a sequence of convex bodies such that  $Q_i\in \cKon$ for each $i\in \N$ and $ \sup_{i\geq 1} \|h_{Q_i}\|_{\mu_i,\varphi}<\infty. $ Then $\{Q_i\}_{i\geq 1}$ is a bounded sequence in $\cKon$.
\end{lemma}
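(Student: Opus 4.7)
The plan is to argue by contradiction, reducing to essentially the same situation as in Lemma \ref{bounded-for-convergence--1} by converting the Orlicz norm bound into an integral bound. Suppose $\{Q_i\}_{i\ge 1}$ is unbounded. Passing to a subsequence (still indexed by $i$ for simplicity), choose points $x_i \in Q_i$ with $|x_i|\to\infty$ and such that $x_i/|x_i| \to u_0$ for some $u_0\in\sphere$. Since $\mu$ is not concentrated on any closed hemisphere, the open hemisphere $\{u\in\sphere : \langle u,u_0\rangle > 0\}$ carries positive $\mu$-mass, and therefore one can fix a small $\delta_0>0$ such that the spherical cap $E = \{u\in\sphere : \langle u,u_0\rangle \ge 2\delta_0\}$ satisfies $\mu(E)>0$ and $\mu(\partial E)=0$. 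For all sufficiently large $i$ we have $\langle u, x_i/|x_i|\rangle \ge \delta_0$ for every $u\in E$, hence
\[
h_{Q_i}(u) \ge \langle u, x_i\rangle \ge \delta_0 |x_i| \qquad \text{for all } u\in E.
\]

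Next I convert the hypothesis $\Lambda := \sup_{i\ge 1}\|h_{Q_i}\|_{\mu_i,\varphi}<\infty$ into an integral estimate. Since $\varphi\in\cI$ is strictly increasing, the monotonicity of $\lambda \mapsto \frac{1}{\mu_i(\sphere)}\int_{\sphere}\varphi(h_{Q_i}/\lambda)\,d\mu_i$ together with (\ref{propertyfornorm}) gives, for every $i$,
\[
\int_{\sphere}\varphi\!\left(\frac{h_{Q_i}(u)}{\Lambda}\right)d\mu_i(u) \ \le\ \mu_i(\sphere).
\]
Combining this with the lower bound on $h_{Q_i}$ over $E$, I get for $i$ large
\[
\varphi\!\left(\frac{\delta_0 |x_i|}{\Lambda}\right)\mu_i(E) \ \le\ \int_{E}\varphi\!\left(\frac{h_{Q_i}(u)}{\Lambda}\right)d\mu_i(u) \ \le\ \mu_i(\sphere).
\]

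Finally, since $\mu_i\to\mu$ weakly and $\mu(\partial E)=0$, one has $\mu_i(E)\to\mu(E)>0$ and $\mu_i(\sphere)\to\mu(\sphere)<\infty$ by the portmanteau theorem. As $|x_i|\to\infty$ and $\varphi(\infty)=\infty$ (because $\varphi\in\cI$), the left-hand side above tends to $+\infty$ while the right-hand side remains bounded, a contradiction. Hence $\{Q_i\}$ must be bounded in $\cKon$.

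The only delicate point is the choice of the cap $E$ so that $\mu(\partial E)=0$, ensuring we can transfer the positive mass from $\mu$ to $\mu_i$ for large $i$; this is routine since the collection of caps $\{u:\langle u,u_0\rangle\ge s\}$ for $s>0$ forms a one-parameter family whose boundaries are pairwise disjoint, so all but countably many have $\mu$-null boundary, and their union over $s\downarrow 0$ still has positive $\mu$-measure by the non-concentration hypothesis.
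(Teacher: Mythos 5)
Your proof is correct and takes essentially the same contradiction route as the paper's: both translate the uniform Orlicz-norm bound into the integral estimate $\int_{\sphere} \varphi\left(h_{Q_i}/\Lambda\right)\,d\mu_i \le \mu_i(\sphere)$, use the non-concentration of $\mu$ to locate a spherical cap of positive $\mu$-mass on which $h_{Q_i}$ blows up, and derive the contradiction from $\varphi(\infty)=\infty$. The only real difference is how the weak convergence is handled: you fix a cap $E$ with $\mu(\partial E)=0$ and invoke the portmanteau theorem, whereas the paper fixes a cutoff $C$, integrates $\varphi\left(C\langle u,u_i\rangle_+/\Lambda\right)$, passes to the limit in $i$ via the uniform convergence $\langle u,u_i\rangle_+\to\langle u,v\rangle_+$ and Lemma \ref{uniformly converge-lemma}, and only then sends $C\to\infty$; both routes are standard and equally valid.
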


 \begin{proof} Let $a_+=\max\{a, 0\}$ for all $a\in \R$.  For each $i\in \N$, let  $u_i\in \sphere$ be such that $\rho_{Q_i}(u_i)=\max_{u\in\sphere}\rho_{Q_i}(u),$ and hence
 $h_{Q_{i}}(u)\geq
\rho_{Q_i}(u_i) \langle u,u_i\rangle_+$ for any $u\in\sphere$. Assume that $\{Q_i\}_{i\geq 1}$ is not bounded in $\cKon$, i.e., $\sup_{i\geq 1} \rho_{Q_i}(u_i) =\infty$. Without loss of generality, let $u_i\rightarrow v\in \sphere$ and  $\lim_{i\rightarrow \infty} \rho_{Q_i}(u_i) =\infty$.  By formula (\ref{propertyfornorm}) and  $\varphi\in\mathcal{I}$, one has for any given $C>0$, there exists $i_C\in \N$ such that for all $i>i_C$,
\begin{eqnarray*}
1
&=&\frac{1}{\mu_i{(\sphere)}}\int_{\sphere}\varphi
\left(\frac{h_{Q_{i}}(u)}{\|h_{Q_{i}}\|_{\mu_i,\varphi}}\right)\,d\mu_i(u)\\
&\geq&\frac{1}{\mu_i{(\sphere)}}\int_{\sphere}\varphi\left(\frac{
 \rho_{Q_i}(u_i) \langle
u,u_i\rangle_+}{\sup_{i\geq 1}\|h_{Q_{i}}\|_{\mu_i,\varphi}}\right)\,d\mu_i(u)\\&\geq&\frac{1}{\mu_i{(\sphere)}}\int_{\sphere}\varphi\left(\frac{
C \cdot \langle
u,u_i\rangle_+}{\sup_{i\geq 1}\|h_{Q_{i}}\|_{\mu_i,\varphi}}\right)\,d\mu_i(u).
\end{eqnarray*} By Lemma \ref{uniformly converge-lemma}, the uniform convergence of $\langle u,u_i\rangle_+\rightarrow \langle u, v\rangle_+$ on $\sphere$ as $u_i\rightarrow v$,   the weak convergence of $\mu_i\rightarrow \mu$, and $\varphi\in \cI$, one gets \begin{eqnarray*}
1 &\geq&\lim_{i\rightarrow \infty} \frac{1}{\mu_i{(\sphere)}}\int_{\sphere}\varphi\left(\frac{
C \cdot \langle
u,u_i\rangle_+}{\sup_{i\geq 1}\|h_{Q_{i}}\|_{\mu_i,\varphi}}\right)\,d\mu_i(u)\\ &=& \frac{1}{\mu{(\sphere)}}\int_{\sphere} \varphi\left(\frac{
C \cdot \langle
u,v\rangle_+}{\sup_{i\geq 1}\|h_{Q_{i}}\|_{\mu_i,\varphi}}\right)\,d\mu(u)\\&\geq& \frac{1}{\mu{(\sphere)}}\cdot \varphi\left(\frac{
C \cdot c_0}{\sup_{i\geq 1}\|h_{Q_{i}}\|_{\mu_i,\varphi}}\right)\cdot \int_{\{u\in\sphere: \langle u, v\rangle \geq c_0\}} \,d\mu(u),
\end{eqnarray*}
where $c_0>0$ is a finite constant (which always exists due to the
monotone convergence theorem and the assumption that $\mu$ is not
concentrated on any closed hemisphere) such that
$\int_{\{u\in\sphere: \langle u, v\rangle \geq c_0\}} \,d\mu(u)>0$.
Taking $C\rightarrow \infty$, the fact that $\lim_{t\rightarrow
\infty}\varphi(t)=\infty $ then yields a contradiction as $1\geq
\infty$. This concludes that $\{Q_i\}_{i\geq 1}$ is a bounded
sequence in $\cKon$. \end{proof}

 Our first variation of Problem \ref{general-dual-polar}  is the following general dual-polar Orlicz-Minkowski problem associated with the  Orlicz norms:
\begin{problem}\label{general-dual-polar-norm}   Under what conditions on a nonzero finite Borel measure $\mu$ defined
on $\sphere$, continuous functions $\varphi: (0, \infty)\rightarrow (0, \infty)$ and
$G\in \cG_I\cup \cG_d$  can we find a convex body $K\in\cKon$ solving the following optimization problems:
 \begin{eqnarray}\label{Problem-1-1-1-1-norm}
&&\inf /\sup \left\{ \|h_Q\|_{\mu,\varphi}:\ \  Q\in \cBt \right\}; \\ &&
 \label{Problem-1-1-2-2-norm}
\inf /\sup \left\{ \|h_Q\|_{\mu,\varphi}:\ \  Q\in \cBh \right\}.
 \end{eqnarray} \end{problem}

 Due to the high similarity of properties of $\int_{\sphere}\varphi(h_Q)\,d\mu$ and $\|h_Q\|_{\mu,\varphi}$,  results and their proofs in Section \ref{section-4} can be extended and adopted to Problem \ref{general-dual-polar-norm}.
 For instance, the existence of solutions to Problem \ref{general-dual-polar-norm}, if the infimum is considered, can be obtained.

\begin{theorem}\label{polardualorlicz-02-02-norm}
Let $\varphi\in\mathcal{I}$ and
$\mu$  be  a  nonzero finite Borel measure defined on $\sphere$ which
is not concentrated on any closed hemisphere. Let
$G\in \cG_I$ be a continuous function such that (\ref{conditionforG}) holds for some $q\geq n-1$.  Then the following statements hold.

\vskip 2mm \noindent  i) There exists $\widetilde{M}\in \cBt$ such that  \begin{equation}\label{Problem-02-02-norm}
\|h_{\widetilde{M}}\|_{\mu,\varphi} =\inf\Big\{\|h_{Q}\|_{\mu,\varphi} :\  Q\in \cBt \Big\}.
 \end{equation}  If,  in addition, both $\varphi(t)$ and $G(t,\cdot)$ are convex on $t\in (0, \infty)$, then the solution is unique.

 \vskip 2mm  \noindent  ii) There exists $\widehat{M}\in \cBh$ such that  \begin{equation*}
\|h_{\widehat{M}}\|_{\mu,\varphi} =\inf\Big\{\|h_{Q}\|_{\mu,\varphi} :\  Q\in \cBh \Big\}.
 \end{equation*}  If,  in addition, both $\varphi(t)$ and $G(t,\cdot)$ are convex on $t\in (0, \infty)$, then the solution is unique.
 \end{theorem}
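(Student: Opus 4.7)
The plan is to mirror the proof of Theorem~\ref{polardualorlicz-1-29-1} essentially verbatim, with the integral $\int_{\sphere}\varphi(h_Q)\,d\mu$ replaced by the Orlicz norm $\|h_Q\|_{\mu,\varphi}$ throughout. The substitution is almost cosmetic: Lemmas~\ref{convergneceofmeasure} and \ref{bounded-for-convergence--norm-2-2} supply for the Orlicz norm the same twin tools (joint continuity under $(Q,\mu)$-convergence and boundedness of minimizing sequences in $\cKon$) that Lemmas~\ref{uniformly converge-lemma} and \ref{bounded-for-convergence--1} supplied for the integral functional, while the homogeneity $\|ch_Q\|_{\mu,\varphi}=c\|h_Q\|_{\mu,\varphi}$ and set-inclusion monotonicity recorded just after (\ref{propertyfornorm}) stand in for the growth and monotonicity used there.

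For existence in part~i), I first establish the discrete-measure analogue of Theorem~\ref{polardualorlicz1}. If $\mu=\sum_{i=1}^{m}\lambda_i\delta_{u_i}$ has support not in any closed hemisphere, any minimizer $\widetilde M\in\cBt$ must be a polytope with outer normals among $u_1,\ldots,u_m$: take $P\supseteq\widetilde M$ the polytope with $h_P(u_i)=h_{\widetilde M}(u_i)$ for all $i$; then $\|h_P\|_{\mu,\varphi}=\|h_{\widetilde M}\|_{\mu,\varphi}$ because the Orlicz norm depends only on the values of the support function on $\{u_1,\ldots,u_m\}$, while strict monotonicity of $\dveV$ on $\cG_I$ yields $\dveV(P^\circ)\le\dveV(\ball)$, so some $t_0\ge 1$ gives $P/t_0\in\cBt$; minimality then forces $\|h_{\widetilde M}\|_{\mu,\varphi}\le\|h_{P/t_0}\|_{\mu,\varphi}=\|h_{\widetilde M}\|_{\mu,\varphi}/t_0$, whence $t_0=1$ and $P=\widetilde M$. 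The associated finite-dimensional minimization over $z\in\mathbb{R}_+^m$ with $P(z)\in\cBt$ is bounded above by $1/t_1$ (from $\ball/t_1\in\cBt$ for a suitable $t_1\ge 1$), which combined with the requirement $h_{P(z)}(u_i)=z_i$ at a minimizer confines $z$ to a bounded set; the existence of a polytopal minimizer then follows via Lemma~\ref{lemmaforinterior}. To pass to general $\mu$, approximate weakly by discrete measures $\mu_k$ with supports not in any closed hemisphere, let $\widetilde M_k\in\cBt$ be corresponding polytopal minimizers, and note $\|h_{\widetilde M_k}\|_{\mu_k,\varphi}\le\|h_{\ball}\|_{\mu_k,\varphi}=1$. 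Lemma~\ref{bounded-for-convergence--norm-2-2} makes $\{\widetilde M_k\}$ bounded in $\cKon$; Lemma~\ref{lemmaforinterior} extracts a subsequential limit $\widetilde M\in\cBt$; and Lemma~\ref{convergneceofmeasure} lets one pass the inequality $\|h_{\widetilde M_k}\|_{\mu_k,\varphi}\le\|h_Q\|_{\mu_k,\varphi}$ (for each fixed $Q\in\cBt$) to the limit, yielding (\ref{Problem-02-02-norm}). Part~ii) is identical after replacing $\cBt,\dveV$ by $\cBh,\hG$ and invoking Lemma~\ref{lemmaforinterior}(ii) and Proposition~\ref{properties-G-hat}.

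For uniqueness, suppose $\widetilde M,\widetilde M_0\in\cBt$ both attain the infimum value $\alpha$. Form $K_0=(\widetilde M+\widetilde M_0)/2\in\cKon$; then $h_{K_0}=(h_{\widetilde M}+h_{\widetilde M_0})/2$ and $\rho_{K_0^\circ}=2\rho_{\widetilde M^\circ}\rho_{\widetilde M_0^\circ}/(\rho_{\widetilde M^\circ}+\rho_{\widetilde M_0^\circ})$. Convexity of $G(t,\cdot)$ together with its strict monotonicity in $t$ yields $\dveV(K_0^\circ)\le\dveV(\ball)$ exactly as in (\ref{equality-03-03-19}), so some $t_2\ge 1$ makes $K_0/t_2\in\cBt$. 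Convexity of $\varphi$ applied in (\ref{propertyfornorm}) gives
\begin{equation*}
\frac{1}{\mu(\sphere)}\int_{\sphere}\varphi\!\left(\frac{h_{K_0}(u)}{\alpha}\right)d\mu(u)\le \frac{1}{2}+\frac{1}{2}=1,
\end{equation*}
so $\|h_{K_0}\|_{\mu,\varphi}\le\alpha$ and hence $\|h_{K_0/t_2}\|_{\mu,\varphi}=\|h_{K_0}\|_{\mu,\varphi}/t_2\le\alpha/t_2$; combined with the minimality $\|h_{K_0/t_2}\|_{\mu,\varphi}\ge\alpha$ this forces $t_2=1$, so $K_0\in\cBt$ and all inequalities in the $\dveV$-chain become equalities. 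Strict monotonicity of $G(t,\cdot)$ then forces $2\rho_{\widetilde M^\circ}\rho_{\widetilde M_0^\circ}/(\rho_{\widetilde M^\circ}+\rho_{\widetilde M_0^\circ})=(\rho_{\widetilde M^\circ}+\rho_{\widetilde M_0^\circ})/2$ pointwise, whence $\rho_{\widetilde M^\circ}=\rho_{\widetilde M_0^\circ}$ and $\widetilde M=\widetilde M_0$. I expect the main obstacle to be precisely this uniqueness step: since the Orlicz norm is defined only implicitly through (\ref{propertyfornorm}), one must extract the scalar rigidity $t_2=1$ from the $\varphi$-side (through the Luxemburg-type defining inequality) and back-propagate it to the $G$-side, so that the final pointwise rigidity comes from strict monotonicity of $G(t,\cdot)$ alone and does not require strict convexity of $\varphi$ or of $G(t,\cdot)$.
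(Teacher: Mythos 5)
Your proof is correct, and the uniqueness part follows the paper's argument exactly: form $K_0=(\widetilde M+\widetilde M_0)/2$, use convexity of $G(t,\cdot)$ to get $\dveV(K_0^\circ)\le\dveV(\ball)$ as in (\ref{equality-03-03-19}), use convexity of $\varphi$ in the Luxemburg identity (\ref{propertyfornorm}) to get $\|h_{K_0}\|_{\mu,\varphi}\le\alpha$, squeeze to force $t_2=1$, and then let strict monotonicity of $G(t,\cdot)$ convert the resulting equality in the harmonic-mean/arithmetic-mean step into $\rho_{\widetilde M^\circ}=\rho_{\widetilde M_0^\circ}$. Your closing observation that neither $\varphi$ nor $G(t,\cdot)$ needs to be strictly convex for this rigidity is accurate and worth noting.

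Where you diverge from the paper is the existence step. You mirror Theorem~\ref{polardualorlicz-1-29-1}: first prove a polytopal existence result for discrete $\mu$ (an Orlicz-norm analogue of Lemma~\ref{polytope-lemma} and Theorem~\ref{polardualorlicz1}), then approximate a general $\mu$ weakly by discrete measures $\mu_k$, extract a convergent subsequence of polytopal minimizers using Lemmas~\ref{bounded-for-convergence--norm-2-2} and \ref{lemmaforinterior}, and pass to the limit with Lemma~\ref{convergneceofmeasure}. The paper instead gives a one-step direct compactness argument for general $\mu$: take any minimizing sequence $\{Q_i\}\subset\cBt$, observe $\sup_i\|h_{Q_i}\|_{\mu,\varphi}<\infty$ (indeed $\le 1$ since $\ball\in\cBt$), apply Lemma~\ref{bounded-for-convergence--norm-2-2} for boundedness, Lemma~\ref{lemmaforinterior} for a convergent subsequence with limit in $\cBt$, and Lemma~\ref{convergneceofmeasure} for continuity of the objective. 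Both routes use the same three lemmas and yield the same result; your version is longer but yields the additional polytopal information that the paper records separately as a remark. (Note the paper does use your discrete-approximation strategy for the integral functional in Theorem~\ref{polardualorlicz-1-29-1}, so your instinct to mirror that theorem was reasonable --- the paper just happens to choose the shorter path here.) One cosmetic remark: since $\ball\in\cBt$ directly, you do not need to introduce a scaling $t_1\ge 1$ to bound the infimum by $1/t_1$; the bound $\le\|h_{\ball}\|_{\mu,\varphi}=1$ is immediate.
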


 \begin{proof}  Only the brief proof for i) is provided and the proof for ii) follows along the same lines.  First of all, $\ball\in\cBt$, and the optimization problem (\ref{Problem-02-02-norm}) is well-defined. In particular, there exists a sequence $\{Q_i\}_{i\geq 1}$ such that each $Q_i\in \cBt$ and
 $$
\lim_{i\rightarrow \infty} \|h_{Q_i}\|_{\mu,\varphi} =\inf\Big\{\|h_{Q}\|_{\mu,\varphi} :\  Q\in \cBt \Big\}<\infty.
 $$ This further implies that $\sup_{i\geq 1}\|h_{Q_i}\|_{\mu,\varphi}<\infty$, which in turn yields the existence of a subsequence  $\{Q_{i_j}\}_{j\geq 1}$ of
$\{Q_i\}_{i\geq 1}$ and $\widetilde{M}\in\cBt$, such that $Q_{i_j}\rightarrow  \widetilde{M}$, by Lemmas \ref{lemmaforinterior} and \ref{bounded-for-convergence--norm-2-2}. It then follows from  Lemma \ref{convergneceofmeasure} that  $\lim_{i\rightarrow\infty}\|h_{Q_i}\|_{\mu,\varphi}=\lim_{j\rightarrow\infty}\|h_{Q_{i_j}}\|_{\mu,\varphi}=\|h_{\widetilde{M}}\|_{\mu,\varphi}.$ This concludes the proof, if one notices $\widetilde{M}\in \cBt$, for the existence of solutions to the optimization problem (\ref{Problem-02-02-norm}).

For the uniqueness, assume that $\widetilde{M}\in \cBt$ and
$\widetilde{M}_0\in\cBt$,  such that
\begin{equation}\label{two-equal-2-2} \|h_{\widetilde{M}}\|_{\mu,
\varphi}=\|h_{\widetilde{M}_0}\|_{\mu,
\varphi}=\inf\Big\{\|h_{Q}\|_{\mu,\varphi} :\  Q\in \cBt \Big\}.
\end{equation}
Note that $G(t, \cdot)$ is convex and $G\in \cG_I$  is strictly increasing. Let $K_0=\frac{\widetilde{M}+\widetilde{M}_0}{2}$.  By (\ref{equality-03-03-19}), there is a constant $t_2\geq 1$ such that
$\dveV(t_2K_0^\circ) =\dveV(\ball)$ and hence $K_0/t_2\in \cBt$. It follows from (\ref{propertyfornorm}), (\ref{two-equal-2-2}), $t_2\geq 1$ and  $\varphi\in\mathcal{I}$ being convex and strictly increasing that
 \begin{eqnarray*} \mu(\sphere)&=&\int_{\sphere}\varphi\bigg(\frac{h_{K_0}(u)}{\|h_{K_0}\|_{\mu,\varphi}}\bigg)\,d\mu\\ &=&\frac{1}{2} \bigg[\int_{\sphere}\varphi\bigg(\frac{h_{\widetilde{M}}}{\|h_{\widetilde{M}}\|_{\mu,\varphi}}\bigg)\,d\mu+\int_{\sphere}\varphi\bigg(\frac{h_{\widetilde{M}_0}(u)}{\|h_{\widetilde{M}_0}\|_{\mu,\varphi}}\bigg)\,d\mu\bigg]\\&\geq&\int_{\sphere}\varphi\bigg(\frac{h_{K_0}(u)}{\|h_{\widetilde{M}_0}\|_{\mu,\varphi}}\bigg)\,d\mu,
 \end{eqnarray*} and hence $\|h_{\widetilde{M}_0}\|_{\mu,\varphi}\geq \|h_{K_0}\|_{\mu,\varphi}\geq \|h_{K_0/t_2}\|_{\mu,\varphi}\geq \|h_{\widetilde{M}_0}\|_{\mu,\varphi}.$  Thus,  all ``$\geq$"  become ``$=$"; and this can happen if and only if $t_2=1$.
 This in turn yields that all ``$\geq$"  in (\ref{equality-03-03-19}) become ``$=$" as well. In particular,  $\widetilde{M}=\widetilde{M}_0$ and the  uniqueness follows. \end{proof}

  Our second example is the continuity for Problem
\ref{general-dual-polar-norm} and its solutions.

\begin{theorem}\label{continuous theorem-02-02}
 Let $\mu_i, \mu$ for $i\in \N$ be finite Borel measures on $\sphere$ which are not concentrated
on any closed hemisphere and $\mu_i\rightarrow \mu$ weakly. Let
$G\in \cG_I$ be a continuous function such that (\ref{conditionforG}) holds for some $q\geq n-1$ and
$\varphi \in \mathcal{I}$. The following statements hold true.

\vskip 2mm \noindent i) Let $\widetilde{M}_i,  \widetilde{M}\in  \cBt$,  for all $i\in \N$,  be solutions to the optimization problem (\ref{Problem-1-1-1-1-norm}), with the infimum considered, for measures $\mu_i$ and $\mu$, respectively. Then, $\lim_{ i\rightarrow \infty}\|h_{\widetilde{M}_i}\|_{\mu_i, \varphi}= \|h_{\widetilde{M}}\|_{\mu, \varphi}.$  If,  in addition, both $\varphi(t)$ and $G(t,\cdot)$ are convex on $t\in (0, \infty)$, then $\widetilde{M}_i \rightarrow \widetilde{M}$ as $i\rightarrow \infty$.

\vskip 2mm \noindent ii) Let $\widehat{M}_i,  \widehat{M}\in  \cBh$,  for all $i\in \N$,  be solutions to the optimization problem (\ref{Problem-1-1-2-2-norm}), with the infimum considered, for measures $\mu_i$ and $\mu$, respectively. Then, $\lim_{ i\rightarrow \infty}\|h_{\widehat{M}_i}\|_{\mu_i, \varphi}= \|h_{\widehat{M}}\|_{\mu, \varphi}.$  If,  in addition, both $\varphi(t)$ and $G(t,\cdot)$ are convex on $t\in (0, \infty)$, then $\widehat{M}_i \rightarrow \widehat{M}$ as $i\rightarrow \infty$. \end{theorem}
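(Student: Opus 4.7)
The plan is to mirror the strategy of Theorem \ref{continuous theorem-1-1} and Corollary \ref{con-4-12-19}, substituting the integral objective $\int_{\sphere}\varphi(h_Q)\,d\mu$ by the Orlicz norm $\|h_Q\|_{\mu,\varphi}$ throughout, and replacing the appeals to Lemma \ref{uniformly converge-lemma} (together with uniform integrability of $\varphi \circ h_{Q_i}$) by the dedicated norm-continuity result Lemma \ref{convergneceofmeasure} and the norm-boundedness result Lemma \ref{bounded-for-convergence--norm-2-2}.

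For part i), the first step is to produce the uniform bound $\|h_{\widetilde{M}_i}\|_{\mu_i,\varphi} \leq 1$ for every $i\in\N$. This follows from $\ball \in \cBt$ together with the minimality of $\widetilde{M}_i$, since $\|h_\ball\|_{\mu_i,\varphi} = \|1\|_{\mu_i,\varphi} = 1$ by the properties of $\|\cdot\|_{\mu,\varphi}$ established after (\ref{propertyfornorm}). Lemma \ref{bounded-for-convergence--norm-2-2} then gives that $\{\widetilde{M}_i\}_{i\geq 1}$ is a bounded sequence in $\cKon$. Let $\{\widetilde{M}_{i_k}\}_{k\geq 1}$ be any subsequence. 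Lemma \ref{lemmaforinterior} i) provides a further subsequence $\{\widetilde{M}_{i_{k_j}}\}_{j\geq 1}$ and a body $M_0\in\cBt$ with $\widetilde{M}_{i_{k_j}}\rightarrow M_0$ in the Hausdorff metric.

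Next I verify that $M_0$ solves (\ref{Problem-02-02-norm}) for the limit measure $\mu$. For any fixed $Q\in \cBt$, the minimality of $\widetilde{M}_{i_{k_j}}$ for $\mu_{i_{k_j}}$ yields
\[
\|h_{\widetilde{M}_{i_{k_j}}}\|_{\mu_{i_{k_j}},\varphi} \ \leq\ \|h_Q\|_{\mu_{i_{k_j}},\varphi}.
\]
Applying Lemma \ref{convergneceofmeasure} to both sides (using $\widetilde{M}_{i_{k_j}}\rightarrow M_0$, the trivially constant sequence $Q$, and $\mu_{i_{k_j}}\rightarrow\mu$ weakly) gives $\|h_{M_0}\|_{\mu,\varphi}\leq \|h_Q\|_{\mu,\varphi}$. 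Taking the infimum over $Q\in\cBt$ and recalling $M_0\in\cBt$, one deduces that $M_0$ achieves the infimum in (\ref{Problem-02-02-norm}) for $\mu$. Consequently,
\[
\lim_{j\rightarrow\infty}\|h_{\widetilde{M}_{i_{k_j}}}\|_{\mu_{i_{k_j}},\varphi}\ =\ \|h_{M_0}\|_{\mu,\varphi}\ =\ \|h_{\widetilde{M}}\|_{\mu,\varphi},
\]
where the last equality uses the value of the infimum. Since every subsequence admits such a further subsequence with the same limit, the full sequence satisfies $\lim_{i\to\infty}\|h_{\widetilde{M}_i}\|_{\mu_i,\varphi}=\|h_{\widetilde{M}}\|_{\mu,\varphi}$. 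Under the additional convexity hypothesis, the uniqueness clause of Theorem \ref{polardualorlicz-02-02-norm} forces $M_0=\widetilde{M}$; the same subsequence-of-subsequence argument then upgrades Hausdorff convergence to $\widetilde{M}_i\rightarrow\widetilde{M}$.

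Part ii) is carried out along the same lines, with $\cBt$ replaced by $\cBh$, invoking part ii) of Lemma \ref{lemmaforinterior} (which requires $G\in\cG_I$, already assumed) and part ii) of Theorem \ref{polardualorlicz-02-02-norm}. I do not expect a genuine obstacle: the three ingredients (uniform norm bound via $\ball$, tightness via Lemma \ref{bounded-for-convergence--norm-2-2}, and subsequential closure in $\cBt$ or $\cBh$ via Lemma \ref{lemmaforinterior}) are exactly what the norm-based setting needs. The only point deserving slight care is the double application of Lemma \ref{convergneceofmeasure} to the varying sequence $\widetilde{M}_{i_{k_j}}$ and to the constant sequence $Q$ against the weakly convergent measures, but this is already built into the statement of that lemma.
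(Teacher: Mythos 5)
Your proposal is correct and follows essentially the same route as the paper: uniform bound $\|h_{\widetilde{M}_i}\|_{\mu_i,\varphi}\leq \|h_\ball\|_{\mu_i,\varphi}=1$ from $\ball\in\cBt$ and minimality, boundedness via Lemma \ref{bounded-for-convergence--norm-2-2}, subsequential closure in $\cBt$ (resp.\ $\cBh$) via Lemma \ref{lemmaforinterior}, passage to the limit on both sides of the minimality inequality via Lemma \ref{convergneceofmeasure}, the subsequence-of-subsequence principle to upgrade to the full sequence, and the uniqueness clause of Theorem \ref{polardualorlicz-02-02-norm} for the Hausdorff convergence of the optimizers. No gaps.
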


\begin{proof} Only the brief proof for i) is provided and the proof for ii) follows along the same lines. It follows from $\ball\in\cBt$, (\ref{propertyfornorm}), and $\varphi\in \cI$,  in particular $\varphi(1)=1$ that $$ \sup_{i\geq 1}
\|h_{\widetilde{M}_i}\|_{\mu_i, \varphi} \leq \sup_{i\geq 1} \|h_{\ball}\|_{\mu_i, \varphi}=1.$$ Lemma
\ref{bounded-for-convergence--norm-2-2} yields that
$\{\widetilde{M}_i\}_{i\geq 1}$ is a bounded sequence.

Let $\{\widetilde{M}_{i_k}\}_{k\geq 1}$ be an arbitrary subsequence of $\{\widetilde{M}_{i}\}_{i\geq 1}$. Lemma
\ref{lemmaforinterior} yields the existence of a subsequence
$\{\widetilde{M}_{i_{k_j}}\}_{j\geq 1}$ of $\{\widetilde{M}_{i_k}\}_{k\geq
1}$ and $ \widetilde{M}_0\in \cBt$ such that
$\widetilde{M}_{i_{k_j}} \rightarrow \widetilde{M}_0$.   Together with  the minimality of $
\|h_{\widetilde{M}_{i_{k_j}}}\|_{\mu_{i_{k_j}},\varphi}$,  Lemma
\ref{convergneceofmeasure} and the weak convergence of
$\mu_i\rightarrow \mu$ imply that
$$\|h_{\widetilde{M}_0}\|_{\mu,\varphi}=\lim_{j\rightarrow \infty}
\|h_{\widetilde{M}_{i_{k_j}}}\|_{\mu_{i_{k_j}},\varphi}\leq
\lim_{j\rightarrow\infty}
\|h_{Q}\|_{\mu_{i_{k_j}},\varphi}=\|h_{Q}\|_{\mu,\varphi},$$ for all $Q\in \cBt$.  Taking the
infimum over $Q\in \cBt$ and together with $\widetilde{M}_0\in
\cBt$, one gets that \begin{eqnarray}\label{m0-3-5-19--1}
\|h_{\widetilde{M}_0}\|_{\mu,\varphi} \leq \inf_{Q\in
\cBt}\|h_{Q}\|_{\mu,\varphi} =\|h_{\widetilde{M}}\|_{\mu,
\varphi}\leq
\|h_{\widetilde{M}_0}\|_{\mu,\varphi}. \end{eqnarray}  In conclusion, every subsequence  $\{\widetilde{M}_{i_k}\}_{k\geq 1}$ of
$\{\widetilde{M}_{i}\}_{i\geq 1}$ has a subsequence $\{\widetilde{M}_{i_{k_j}}\}_{j\geq 1}$ such that \begin{eqnarray*}
\|h_{\widetilde{M}}\|_{\mu,\varphi}=\lim_{j\rightarrow \infty}
\|h_{\widetilde{M}_{i_{k_j}}}\|_{\mu_{i_{k_j}},\varphi},  \end{eqnarray*} which implies  $\lim_{ i\rightarrow
\infty}\|h_{\widetilde{M}_i}\|_{\mu_i, \varphi}=\|h_{\widetilde{M}}\|_{\mu,
\varphi}.$

Formula (\ref{m0-3-5-19--1}) asserts that $\widetilde{M}_0\in
\cBt$ solves the optimization problem (\ref{Problem-1-1-1-1-norm})
with the infimum considered.  If,  in addition, both $\varphi(t)$
and $G(t,\cdot)$ are convex on $t\in (0, \infty)$, the uniqueness in
Theorem \ref{polardualorlicz-02-02-norm} implies  $\widetilde{M}_0=
\widetilde{M}$. In conclusion, every subsequence
$\{\widetilde{M}_{i_k}\}_{k\geq 1}$ of $\{\widetilde{M}_{i}\}_{i\geq
1}$ has a subsequence $\{\widetilde{M}_{i_{k_j}}\}_{j\geq 1}$ such
that $\widetilde{M}_{i_{k_j}}\rightarrow \widetilde{M}$.  Hence
$\widetilde{M}_i \rightarrow \widetilde{M}$ as $i\rightarrow
\infty$. \end{proof}

 An argument almost identical to Lemma \ref{polytope-lemma} shows that, if $\varphi\in \cI$ and $G\in \cG_I$ satisfying (\ref{conditionforG}) for some $q\geq n-1$, the solutions to  Problem
\ref{general-dual-polar-norm} with the infimum considered for $\mu$  being a discrete measure
defined in (\ref{discrete--1--1}) (whose support $\{u_1,\cdots, u_m\}$ is
not concentrated on any closed hemisphere) must  be polytopes with
$\{u_1,\cdots, u_m\}$ being the corresponding unit normal vectors of
their faces. Counterexamples in Proposition \ref{denial of other
possibilities} can be used to prove that the solutions to Problem
\ref{general-dual-polar-norm}
 may not exist if $\varphi\in \cI
\cup\cD$ and the supremum is considered or if $\varphi\in \cD$ and
the infimum is considered. We leave the details for readers.

\subsection{The polar Orlicz-Minkowski problem associated with the general volume}\label{section 5-2}
 Let $G: (0,\infty)\times\sphere\rightarrow(0,\infty)$ be a
continuous function. In \cite{GHWXY}, the general volume of a convex body
$K\in\cKon$, denoted by $V_{G}(K)$, is proposed to be
\begin{equation*}\label{general-dual-02-02}
\vG(K)=\int_{\sphere}G(h_{K}(u),u)\,dS_K(u),
\end{equation*} where $S_K$ denotes the surface area measure of $K$ defined on $\sphere$. Note that $\vG(K)=V(K)$ if
$G(t,u)=t/n$ for any $(t, u)\in  (0,\infty)\times\sphere$.

For each $K\in \cKon$, denote by $S(K)$ the surface area of $K$. A
fundamental inequality for $S(K)$ is the celebrated classical
isoperimetric inequality (see e.g., \cite{Sch}): \begin{equation}
\label{isoperimetric-inequality-2-5}  S(K)\geq n
\big[V(\ball)\big]^{1/n} V(K)^{\frac{n-1}{n}}. \end{equation} Define the homogeneous general volume of $K\in \cKon$, denoted
by $\vGb(K)$, as follows: for $G\in \cG_I\cup\cG_d$,
\begin{equation}\label{homogenerousnormforG-02-02}
 \frac{1}{S(K)} \int_{\sphere}G\bigg(\frac{S(K)\cdot  h_{K}(u)}{\vGb(K)},u\bigg)\,dS_K(u)=1.
\end{equation}  In particular, $\vGb(K)=V(K)$ if $G(t, u)=t/n$.  Note that $\vGb(K)$ has equivalent formulas similar to (\ref{homogenerousvolumeincreasing}) and  (\ref{homogenerousvolumedecreasing}).

Problems
\ref{general-dual-polar} and \ref{general-dual-polar-norm} can be asked for $\vG(\cdot)$ and $\vGb(\cdot)$, respectively.
\begin{problem}\label{general-polar-norm-02-02-02}   Under what conditions on a nonzero finite Borel measure $\mu$ defined
on $\sphere$, continuous functions $\varphi: (0, \infty)\rightarrow (0, \infty)$ and
$G: (0, \infty)\times \sphere \rightarrow (0, \infty)$  can we find a convex body $K\in\cKon$ solving the following optimization problems:
 \begin{eqnarray*} 
& & \inf /\sup \left\{ \|h_Q\|_{\mu,\varphi}:\  Q\in \cB \right\}  \ \  \mathrm{or}\ \ \inf /\sup \left\{ \int_{\sphere}\varphi(h_{Q}(u))\,d\mu(u):\  Q\in \cB \right\}; \\
&&  \inf /\sup \left\{ \|h_Q\|_{\mu,\varphi}:\  Q\in \cBb \right\} \ \mathrm{or}\ \ \inf /\sup \left\{\int_{\sphere}\varphi(h_{Q}(u))\,d\mu(u):\  Q\in \cBb \right\},  
 \end{eqnarray*} where $\cB$ and $\cBb$ are given by
  \begin{eqnarray*}
 \cB&=&\big\{Q\in\cKon:\  \vG{(Q^\circ)}=\vG{(\ball)}\big\}; \label{def-B-polar-2-1}\\
\cBb&=&\big\{Q\in\cK_{(o)}^n:\
\vGb(Q^\circ)=\vGb(\ball)\big\},  \ \ \ \mathrm{if} \ \ G\in \cG_I\cup \cG_d.
 \end{eqnarray*}
\end{problem}
Again, when $G=t/n$, Problem \ref{general-polar-norm-02-02-02}
becomes the polar Orlicz-Minkowski problem \cite{LuoYeZhu}. From
Sections  \ref{section-4} and \ref{section 5-1}, one sees that the
existence and continuity of solutions to Problems
\ref{general-dual-polar} and \ref{general-dual-polar-norm} are
similar, and their proofs heavily depend on Lemmas
\ref{lemmaforinterior}, \ref{bounded-for-convergence--1},
\ref{convergneceofmeasure} and
\ref{bounded-for-convergence--norm-2-2}. In particular, if  alternative
arguments of Lemma \ref{lemmaforinterior} for $\vG(\cdot)$ and
$\vGb(\cdot)$ can be established,  the desired existence and continuity
of solutions, if applicable, to Problem
\ref{general-polar-norm-02-02-02} will follow.

 Some properties for $\vG(\cdot)$ and
$\vGb(\cdot)$ are summarized in the following two propositions.

 \bp \label{properties-G-2-8-polar}   Let $G: (0, \infty)\times\sphere\rightarrow (0, \infty)$ be a continuous function. The  general volume  $\vG(\cdot)$ has the following properties.

\vskip 1mm \noindent i)   $\vG(\cdot)$ is  continuous on $\cKon$ in terms of the Hausdorff metric, that is, for any sequence $\{K_i\}_{i\geq 1}$ such that $K_i\in \cKon$ for all $i\in \N$ and $K_i\rightarrow K\in \cKon$, then  $\vG(K_i)\rightarrow \vG(K). $

\vskip 1mm\noindent ii) Let $K\in \cKon$. If $\overline{G}(t, \cdot)=t^{n-1}G(t, \cdot)\in \cG_I$, then  $\vG(tK)$ is strictly increasing on $t\in (0, \infty)$ and   $$\lim_{t\rightarrow 0^+} \vG(tK) =0 \ \ \mathrm{and} \ \ \lim_{t\rightarrow \infty} \vG(tK) =\infty;$$ while if $\overline{G}\in \cG_d$, then $\vG(tK)$ is strictly decreasing on $t\in (0, \infty)$ and $$\lim_{t\rightarrow 0^+} \vG(tK) =\infty \ \ \mathrm{and} \ \ \lim_{t\rightarrow \infty} \vG(tK) =0.$$ \ep

 \begin{proof}
The fact that $K_i\rightarrow K\in \cKon$ with $K_i\in \cKon$ for
each $i\in \N$ implies that $h_{K_i}\rightarrow h_K$ uniformly on
$\sphere$ and $S(K_i)\rightarrow S(K)$. Moreover, there exist two
positive constants $r_K<R_K$ such that $$r_K\leq h_K\leq R_K \ \ \
\text{and} \ \ \ r_K\leq h_{K_i}\leq R_K \ \ \ \text{for\ all} \ \
i\in \N. $$

  \vskip 2mm \noindent i)   As $h_{K_i}\rightarrow h_K$ uniformly on $\sphere$, one has  $G(h_{K_i}(u), u)\rightarrow G(h_K(u), u)$ also uniformly on $\sphere$.  Lemma \ref{uniformly converge-lemma}  and the well known fact that $S_{K_i}\rightarrow S_K$ weakly yield that  $\vG(K_i)\rightarrow \vG(K)$ as $i\rightarrow \infty$.

  \vskip 2mm \noindent ii) Let $K\in \cKon$. For all $t>s>0$ and all $u\in\sphere$, if
$\overline{G}\in \cG_I$ (and hence $\overline{G}(t,\cdot)$ is
strictly increasing on $t>0$), then $\vG(tK)$ is strictly increasing
on $t>0$ as follows:
\begin{eqnarray*}\vG(tK) &=&\int_{\sphere}G(h_{tK}(u),
u)\,dS_{tK}(u)\\ &=&\int_{\sphere}t^{n-1}G(t\cdot h_{K}(u),
u)\,dS_{K}(u)\\ &=&\int_{\sphere}\overline{G}(t\cdot h_{K}(u),
u)h^{1-n}_K(u)\,dS_{K}(u)\\&>&\int_{\sphere}\overline{G}(s\cdot
h_{K}(u), u)h^{1-n}_K(u)\,dS_{K}(u)=\vG(sK). \end{eqnarray*}  As
$r_K\leq h_K(u)\leq R_K$ for all $u\in \sphere$, \begin{eqnarray*}
\lim_{t\rightarrow 0^+}\vG(tK) &=& \lim_{t\rightarrow
0^+}\int_{\sphere}\overline{G}(t\cdot h_{K}(u),
u)h^{1-n}_K(u)\,dS_{K}(u) \\ &\leq&  \lim_{t\rightarrow
0^+}\int_{\sphere} r_K^{1-n}\overline{G}(t\cdot R_K, u)\,dS_{K}(u)\\
&=& \int_{\sphere}\lim_{t\rightarrow 0^+}
r_K^{1-n}\overline{G}(t\cdot R_K, u)\,dS_{K}(u)=0,\end{eqnarray*}
where we have used the dominated convergence theorem and the fact
that $\lim_{t\rightarrow 0^+} \overline{G}(t, \cdot)=0$. This proves
that  $\lim_{t\rightarrow 0^+}\vG(tK)=0.$ Similarly,
$\lim_{t\rightarrow \infty}\vG(tK)=\infty$ can be proved as follows:
\begin{eqnarray*}  \lim_{t\rightarrow \infty}\vG(tK) \geq
\liminf_{t\rightarrow \infty} \int_{\sphere}\overline{G}(t\cdot
r_{K}, u)R^{1-n}_K\,dS_{K}(u)\geq \int_{\sphere}
\liminf_{t\rightarrow \infty}\overline{G}(t\cdot r_{K},
u)R^{1-n}_K\,dS_{K}(u)=\infty,\end{eqnarray*} where we have used
Fatou's lemma and the fact that $\lim_{t\rightarrow
\infty}\overline{G}(t, \cdot)=\infty$. The desired result for the
case $\overline{G}\in \cG_d$ follows along the same lines.
 \end{proof}

 \bp
\label{properties-G-hat-polar} Let $G\in \cG_I\cup \cG_d$. The  homogeneous general volume $\vGb(\cdot)$ has the following properties.

\vskip 1mm \noindent i)  $\vGb(\cdot)$ is homogeneous, that is,
$\vGb(tK)=t^n \vGb(K)$ holds for all $t> 0$ and all $K\in \cKon$.

\vskip 1mm \noindent ii)   $\vGb(\cdot)$ is continuous on $\cKon$ in terms of the Hausdorff metric, that is, for any sequence $\{K_i\}_{i\geq 1}$ such that $K_i\in \cKon$ for all $i\in \N$ and $K_i\rightarrow K\in \cKon$, then  $ \vGb(K_i)\rightarrow \vGb(K). $ \ep

 \begin{proof}    i) The desired argument follows trivially from (\ref{homogenerousnormforG-02-02}), the strict monotonicity of $G$, and  the facts that $S(tK)=t^{n-1}S(K)$ and $h_{tK}=t\cdot h_K$ for all $t>0$.

  \vskip 2mm \noindent ii) Following the notations as in Proposition \ref{properties-G-2-8-polar},
we will prove the continuity for $\vGb(\cdot)$ if $G\in \cG_I$ (and
the proof for the case $G\in\cG_d$ is omitted).   It follows from
(\ref{homogenerousnormforG-02-02}) that  \begin{eqnarray*}
\label{compare-1-23-1-polar} \int_{\sphere}G\bigg(\frac{S(K_i)\cdot
r_{K}}{\vGb(K_i)},u\bigg)\,dS_{K_i}(u) \ \leq \ S(K_i)\ \leq
\int_{\sphere}G\bigg(\frac{S(K_i)\cdot
R_{K}}{\vGb(K_i)},u\bigg)\,dS_{K_i}(u).
\end{eqnarray*}  Suppose that $\inf_{i\in\mathbb{N}} \vGb(K_i)=0$,
and without loss of generality, assume that  $\lim_{i\rightarrow
\infty} \vGb(K_i)=0$.  Then for any $\varepsilon>0$, there exists
$i_{\varepsilon}\in \N$ such that $\vGb(K_i)<\varepsilon$ for all
$i>i_{\varepsilon}$. Hence, for $i\geq
i_{\varepsilon}$, \begin{eqnarray*}
\int_{\sphere}G\bigg(\frac{S(K_i)\cdot
r_{K}}{\varepsilon},u\bigg)\,dS_{K_i}(u)\ \le \
\int_{\sphere}G\bigg(\frac{S(K_i)\cdot
r_{K}}{\vGb(K_i)},u\bigg)\,dS_{K_i}(u)\ \leq \ S(K_i).
\end{eqnarray*}
A contradiction can be obtained from  Lemma \ref{uniformly
converge-lemma}, the weak convergence of  $S_{K_i}\rightarrow S_K$,
the facts that $\lim_{t\rightarrow \infty}G(t, \cdot)=\infty$ and
$S(K_i)\rightarrow S(K)$, and Fatou's lemma as follows:
\begin{eqnarray*}
S(K)&\geq& \liminf_{\varepsilon\rightarrow 0^+}\bigg[\lim_{i\rightarrow
\infty}
\int_{\sphere}G\bigg(\frac{S(K_i)\cdot r_{K}}{\varepsilon},u\bigg)\,dS_{K_i}(u)\bigg]\\
&=&\liminf_{\varepsilon\rightarrow 0^+}
\int_{\sphere}G\bigg(\frac{S(K)\cdot r_{K}}{\varepsilon},u\bigg)\,dS_{K}(u)\\
&\geq & \int_{\sphere} \liminf_{\varepsilon\rightarrow 0^+}
G\bigg(\frac{S(K)\cdot r_{K}}{\varepsilon},u\bigg)\,dS_{K}(u)=
\infty.\end{eqnarray*}
 This is impossible and hence $\inf_{i\in\mathbb{N}}
\vGb(K_i)>0.$ Similarly, $\sup_{i\in\mathbb{N}}
\vGb(K_i)<\infty.$

Now let us prove $\lim_{i\rightarrow \infty} \vGb(K_i) =\vGb(K)$.
Assume that $\vGb(K)<\limsup_{i\rightarrow \infty} \vGb(K_i).$ There exists a subsequence $\{K_{i_j}\}$ of $\{K_i\}$ such that $\vGb(K)<\lim_{j\rightarrow \infty} \vGb(K_{i_j}) \leq \sup_{i\in\mathbb{N}}
\vGb(K_i)<\infty.$ By $G\in \cG_I$,  (\ref{homogenerousnormforG-02-02}),  Lemma \ref{uniformly converge-lemma}, $S_{K_{i_j}}\rightarrow S_K$ weakly and  $h_{K_{i_j}}\rightarrow h_K>0$ uniformly on $\sphere$, one gets \begin{eqnarray*}
 S(K)&=& \lim_{j\rightarrow \infty} \int_{\sphere}G\bigg(\frac{S(K_{i_j})\cdot h_{K_{i_j}}(u)}{\vGb(K_{i_j})},u\bigg)\,dS_{K_{i_j}}(u) \\
 &=&\int_{\sphere} G\bigg(\frac{S(K)\cdot h_{K}(u)}{ \lim_{j\rightarrow \infty}\vGb(K_{i_j})},u\bigg)\,dS_{K}(u) \\
 &<& \int_{\sphere} G\bigg(\frac{S(K)\cdot h_{K}(u)}{\vGb(K)},u\bigg)\,dS_K(u)=S(K).
\end{eqnarray*} This is a contradiction and hence $\limsup_{i\rightarrow \infty} \vGb(K_i) \leq \vGb(K).$ Similarly, $\liminf_{i\rightarrow \infty} \vGb(K_i) \geq \vGb(K)$ and then the desired equality $\lim_{i\rightarrow \infty} \vGb(K_i) =\vGb(K)$ holds.  \end{proof}

 The following lemma is a replacement of Lemma \ref{lemmaforinterior}. Note that the monotonicity of $\vG(\cdot)$ and $\vGb(\cdot)$ in terms of set inclusion,  in general, may be invalid. Therefore, our proof for Lemma \ref{lemmaforinterior-polar} is quite different from the one for Lemma \ref{lemmaforinterior}.

\begin{lemma}\label{lemmaforinterior-polar} Let
$G: (0,\infty)\times\sphere\rightarrow (0, \infty)$ be a continuous function and $G_q(t, u)= \frac{G(t,u)}{t^q}$ for $q\in \R$.

 \vskip 2mm \noindent i) Suppose that  there exists a constant   $q\in (1-n, 0)$, such that,
    \begin{equation}\label{conditionforG-polar}
    \inf\Big\{G_q(t,u): \  t\geq 1 \ \ \mathrm{and}\  \ u\in \sphere\Big\} >0.
    \end{equation} If $\{Q_i\}_{i\geq 1}$ with $Q_i\in \cB$ for all $i\in \N$ is a bounded sequence, then there exist a subsequence $\{Q_{i_j}\}_{j\geq 1}$ of $\{Q_i\}_{i\geq 1}$ and $Q_0\in \cB$ such that $Q_{i_j}\rightarrow Q_0.$

\vskip 2mm \noindent ii) Let $G\in \cG_I$ satisfy (\ref{conditionforG-polar}) for some $q\geq 1$.  If $\{Q_i\}_{i\geq 1}$ with $Q_i\in \cBb$ for all $i\in \N$ is a bounded sequence, then there exist a subsequence $\{Q_{i_j}\}_{j\geq 1}$ of $\{Q_i\}_{i\geq 1}$ and $Q_0\in \cBb$ such that $Q_{i_j}\rightarrow Q_0.$
 \end{lemma}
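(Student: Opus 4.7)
The plan is to adapt the strategy of Lemma~\ref{lemmaforinterior}. After using the Blaschke selection theorem to extract a subsequence $Q_{i_j}\to Q_0\in\cKn$ with $o\in Q_0$, the whole task reduces to showing $o\in \mathrm{int}\,Q_0$; once this is established, $Q_{i_j}^\circ\to Q_0^\circ$ follows, and the continuity of $\vG(\cdot)$ in Proposition~\ref{properties-G-2-8-polar} (respectively of $\vGb(\cdot)$ in Proposition~\ref{properties-G-hat-polar}) immediately gives $Q_0\in\cB$ (respectively $Q_0\in\cBb$). I argue by contradiction: suppose $o\in\partial Q_0$. Then $\min_{\sphere} h_{Q_{i_j}}\to 0$, and the same cone construction as in Lemma~\ref{lemmaforinterior} produces a cone contained in $Q_{i_j}^\circ$ with volume of order $1/r_{i_j}\to\infty$, so $V(Q_{i_j}^\circ)\to\infty$. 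The isoperimetric inequality~(\ref{isoperimetric-inequality-2-5}) then yields $S(Q_{i_j}^\circ)\to\infty$ as well, and the two parts reduce to showing that the constraints $\vG(Q_{i_j}^\circ)=\vG(\ball)$ and $\vGb(Q_{i_j}^\circ)=\vGb(\ball)$ are incompatible with this regime.

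For part~i) I will first upgrade the hypothesis: since $Q_{i_j}\subset R\ball$ forces $h_{Q_{i_j}^\circ}\geq 1/R$ uniformly, continuity and strict positivity of $G$ on the compact set $[1/R,1]\times\sphere$ combined with (\ref{conditionforG--1}) produce a constant $C>0$ such that $G(t,u)\geq C t^q$ for every $t\geq 1/R$ and $u\in \sphere$. Thus
$$\vG(Q_{i_j}^\circ)\ \geq\ C\int_{\sphere} h_{Q_{i_j}^\circ}(u)^q\,dS_{Q_{i_j}^\circ}(u).$$
Because $q\in(1-n,0)$ makes $t\mapsto t^q$ convex on $(0,\infty)$, Jensen's inequality applied to the probability measure $dS_{Q_{i_j}^\circ}/S(Q_{i_j}^\circ)$, together with $\int h_K\,dS_K=nV(K)$, gives $\int h_{Q_{i_j}^\circ}^q\,dS_{Q_{i_j}^\circ}\geq n^q V(Q_{i_j}^\circ)^q\,S(Q_{i_j}^\circ)^{1-q}$. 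Substituting the isoperimetric bound $S(Q_{i_j}^\circ)^{1-q}\geq (nV(\ball)^{1/n})^{1-q}\,V(Q_{i_j}^\circ)^{(n-1)(1-q)/n}$, the aggregate exponent of $V(Q_{i_j}^\circ)$ simplifies to $(n-1+q)/n$, which is strictly positive exactly because $q>1-n$. Hence $\vG(Q_{i_j}^\circ)\to\infty$, contradicting $\vG(Q_{i_j}^\circ)=\vG(\ball)<\infty$.

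For part~ii) I will work from the defining identity~(\ref{homogenerousnormforG-02-02}) directly. Writing $S_j:=S(Q_{i_j}^\circ)$, $h_j:=h_{Q_{i_j}^\circ}$, $V_0:=\vGb(\ball)$, and using $\vGb(Q_{i_j}^\circ)=V_0$, that identity reads $\int_{\sphere} G\!\left(S_j h_j(u)/V_0,u\right)dS_j(u)=S_j$. For all sufficiently large $j$, $S_j h_j/V_0\geq S_j/(RV_0)$ exceeds the threshold $c$ from (\ref{conditionforG--1}), so $G(S_j h_j/V_0,u)\geq C(S_j h_j/V_0)^q$ on all of $\sphere$, giving $\int h_j^q\,dS_j\leq V_0^q S_j^{1-q}/C$. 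Since $q\geq 1$ makes $t\mapsto t^q$ convex, Jensen delivers the matching lower bound $\int h_j^q\,dS_j\geq n^q V(Q_{i_j}^\circ)^q S_j^{1-q}$; the two factors $S_j^{1-q}$ cancel, producing a uniform bound $V(Q_{i_j}^\circ)\leq V_0/(nC^{1/q})$, contradicting $V(Q_{i_j}^\circ)\to\infty$.

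The hard part will be the algebraic balancing: choosing Jensen's inequality at the sign of $q$ where $t^q$ is convex, and marrying it with the isoperimetric inequality so that in part~i) a strictly positive exponent on $V(Q_{i_j}^\circ)$ emerges (which hinges on $q>1-n$), while in part~ii) the $S_j^{1-q}$ factors cancel cleanly (which hinges on $q\geq 1$). Once this balancing is set up, the absence of set-inclusion monotonicity of $\vG(\cdot)$ and $\vGb(\cdot)$, which is the very reason the proof of Lemma~\ref{lemmaforinterior} fails here, is completely bypassed.
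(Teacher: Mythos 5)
Your proof is correct and takes essentially the same route as the paper's: lower-bound $G$ by $C t^q$ on the uniformly lower-bounded range of $h_{Q_i^\circ}$, apply Jensen's inequality (convexity of $t^q$ in the indicated $q$-ranges), and invoke the isoperimetric inequality, yielding the critical exponent $(n-1+q)/n>0$ in part i) and the cancellation of $S^{1-q}$ in part ii); then appeal to the cone estimate from Lemma~\ref{lemmaforinterior} and continuity of $\vG$ (resp.\ $\vGb$). The only cosmetic differences are that the paper proves a uniform bound $\sup_i V(Q_i^\circ)<\infty$ directly rather than running a contradiction with $V(Q_{i_j}^\circ)\to\infty$, and that the $(c,C)$-reformulation you cite as~(\ref{conditionforG--1}) is, for the present hypothesis~(\ref{conditionforG-polar}), stated in the paper as~(\ref{conditionforG-polar-2-5}) --- but the content is identical.
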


\begin{proof} Let $\{Q_i\}_{i\geq 1}$ with $Q_i\in \cKon$ for each $i\in \N$ be bounded. There exists a finite constant
$R>0$ such that $Q_i\subset R\ball$ for all $i\in \N$, which in turn
implies $Q_i^\circ\supset \frac{1}{R}\ball$. In particular, $h_{Q_i^\circ}\geq
1/R$ for each $i\in \N$ and $S(Q_i^\circ)\geq R^{1-n}S(\ball)$ due to the monotonicity of surface area for convex bodies.

\vskip 2mm  \noindent i) Again (\ref{conditionforG-polar}) is
equivalent to: there exist finite constants $c_0, C_0>0$  such that
for $q\in(1-n,0)$,
\begin{equation}\label{conditionforG-polar-2-5}
    \inf\Big\{G_q(t,u):  \ t\geq c_0 \ \ \mathrm{and}\  \ u\in \sphere\Big\} >C_0.
    \end{equation} Let $c_0=1/R$. Then $G(t, u)\geq C_0 t^q$  for $q\in (1-n, 0)$ and  for  all $(t, u)\in [1/R, \infty)\times \sphere$. Thus,
\begin{eqnarray*} \vG(Q_i^\circ)&=&\int_{\sphere}
G(h_{Q_i^\circ}(u), u)\,dS_{Q_i^\circ}(u)\\ &\geq& C_0\cdot
S(Q_i^\circ)  \int_{\sphere} h^q_{Q_i^\circ}(u)\
\frac{1}{S(Q_i^\circ)}\,dS_{Q_i^\circ}(u)\\&\geq& C_0\cdot
S(Q_i^\circ)  \bigg( \int_{\sphere} h_{Q_i^\circ}(u)\
\frac{1}{S(Q_i^\circ)}\,dS_{Q_i^\circ}(u)\bigg)^q\\ &=& C_0\cdot
S(Q_i^\circ) \ \bigg(\frac{nV(Q_i^\circ)}{S(Q_i^\circ)} \bigg)^q
\\&\geq&  C_0\cdot   n\big(V(\ball)\big)^{\frac{1}{n}}
\big(V(Q_i^\circ)\big)^{1-\frac{1}{n}} \
\bigg(\frac{V(Q_i^\circ)}{V(\ball)} \bigg)^{\frac{q}{n}} \\ &=&
C_0\cdot   n\big(V(\ball)\big)^{\frac{1-q}{n}}
\big(V(Q_i^\circ)\big)^{\frac{n-1+q}{n}}, \end{eqnarray*} where we
have used Jensen's inequality and  the classical isoperimetric
inequality (\ref{isoperimetric-inequality-2-5}). Recall that
$\vG(Q_i^\circ)=\vG(\ball)$ for all $i\in \N$ and $1-n<q<0$, one has
\begin{eqnarray*} \sup_{i\geq 1} \big\{ V(Q_i^\circ)\big\} \leq \bigg(\frac{\vG(\ball)}{C_0\cdot
n\big(V(\ball)\big)^{\frac{1-q}{n}} }\bigg)^{\frac{n}{n-1+q}} <\infty.
  \end{eqnarray*}

Note that  $t^n/n$ satisfies  (\ref{conditionforG}).
The proof
of  Lemma \ref{lemmaforinterior} (in particular,
(\ref{contradiction--1})) can be used to get a subsequence $\{Q_{i_j}\}_{j\geq 1}$ of $\{Q_i\}_{i\geq 1}$ and  $Q_0\in \cKon$ such that $Q_{i_j}\rightarrow Q_0$ (see also
\cite[Lemma 3.2]{Lutwak1996}). Consequently
$Q_{i_j}^\circ\rightarrow Q_0^\circ$, and the continuity of
$\vG(\cdot)$ in  Proposition \ref{properties-G-2-8-polar} further
yields that $Q_0\in \cB$ following from $Q_i\in \cB$ for all $i\in \N$.

\vskip 2mm \noindent ii)  Recall that $Q_i^\circ\supset
\frac{1}{R}\ball$ for each $i\in \N$.  As $Q_i \in \cBb$ for each $i\in \N$, one has  $$c_0= \frac{R^{-n} S(\ball)}{\vGb(\ball)}\leq
\frac{S(Q_i^\circ)\cdot  h_{Q_i^\circ}(u)}{\vGb(Q_i^\circ)}. $$  It
follows from (\ref{homogenerousnormforG-02-02}),
(\ref{conditionforG-polar-2-5}) and Jensen's inequality for $q\geq
1$ that
  \begin{eqnarray*}
1&=&\frac{1}{S(Q_i^\circ) }
\int_{\sphere}G\bigg(\frac{S(Q_i^\circ)\cdot
h_{Q_i^\circ}(u)}{\vGb(Q_i^\circ)},u\bigg)\,dS_{Q_i^\circ}(u)\\
&\geq& \frac{C_0}{S(Q_i^\circ) }
\int_{\sphere}\bigg(\frac{S(Q_i^\circ)\cdot
h_{Q_i^\circ}(u)}{\vGb(\ball)}\bigg)^q\,dS_{Q_i^\circ}(u)\\ &\geq&
C_0
\bigg(\int_{\sphere}\frac{h_{Q_i^\circ}(u)}{\vGb(\ball)}\,dS_{Q_i^\circ}(u)\bigg)^q\\
&=& C_0 \bigg(\frac{nV(Q_i^\circ)}{\vGb(\ball)}\bigg)^q.
\end{eqnarray*} This further implies that $V(Q_i^\circ) \leq  n^{-1}C_0^{-1/q} \vGb(\ball)$ for each $i\in \N$.  As in i)  (the last paragraph), one gets a subsequence $\{Q_{i_j}\}_{j\geq 1}$ of $\{Q_i\}_{i\geq 1}$ and  $Q_0\in \cKon$, such that, $Q_{i_j}^\circ\rightarrow Q_0^\circ$.  The continuity of $\vGb(\cdot)$ in  Proposition \ref{properties-G-hat-polar} further yields that $Q_0\in \cBb$ following from $Q_i\in \cBb$ for all $i\in \N$.
 \end{proof}

\vskip 2mm \noindent {\bf Remark.} It can be easily checked that if (\ref{conditionforG-polar}) holds for some $q\geq 0$, Part i) of Lemma \ref{lemmaforinterior-polar}  also holds. To this end, if (\ref{conditionforG-polar}) holds for $q\geq 0$, one can verify that $2q+n-1>0$ and  \begin{eqnarray*} \inf \Big\{G_{\frac{1-n}{2}}(t, u): \ (t, u)\in [1, \infty)\times\sphere\Big\} &=&\inf \Big\{G_q(t, u)\cdot t^{\frac{2q+n-1}{2}}: \ (t, u)\in [1, \infty)\times\sphere\Big\} \\ &\geq&  \inf \Big\{G_q(t, u): \ (t, u)\in [1, \infty)\times\sphere\Big\} >0. \end{eqnarray*} Hence, (\ref{conditionforG-polar}) holds for $\frac{1-n}{2}\in (1-n, 0)$ and then Part i) of Lemma \ref{lemmaforinterior-polar}  also follows. In particular, Part i) of Lemma \ref{lemmaforinterior-polar}  works for  $G=t/n$ and $G=1$ which correspond to the volume and the surface area, respectively. Similar to the remark of Lemma \ref{lemmaforinterior}, if $G\in \cG_d$, $G$ does not satisfy (\ref{conditionforG-polar}) for some $q\geq 1$.

 The existence of solutions and the continuity of the extreme values to Problem \ref{general-polar-norm-02-02-02} for $\vG$ are stated below.

 \begin{theorem}\label{polardualminkowskvG-2-2-8}
  Let $\varphi\in\mathcal{I}$ and let $G: (0,\infty)\times\sphere\rightarrow(0,\infty)$ satisfying (\ref{conditionforG-polar}) for some $q\in (1-n, 0)$.

\vskip 2mm \noindent i) Let $\mu$ be a nonzero finite Borel measure on $\sphere$  whose support is not
concentrated on any great hemisphere. Then  there exist $M_1, M_2\in \cB$  such that
 \begin{eqnarray}\label{Problem-7-polar-2-2-8}
 \int_{\sphere}  \varphi(h_{M_1}(u))d\mu(u)&=&\inf _{Q\in \cB}  \int_{\sphere} \varphi(h_{Q}(u))\,d\mu(u);\\
  \|h_{M_2}\|_{\mu,\varphi}  &=& \inf _{Q\in \cB}   \|h_Q\|_{\mu,\varphi}. \label{Problem-7-polar-norm-2-2-8}
 \end{eqnarray}
\noindent ii) Let $\{\mu_i\}_{i=1}^\infty$ and $\mu$ be nonzero finite Borel
measures on $\sphere$ whose supports are not concentrated
on any closed hemisphere, such that, $\mu_i\rightarrow \mu$ weakly as $i\rightarrow \infty$. Then
\begin{eqnarray*}
\lim_{i\rightarrow \infty} \bigg( \inf _{Q\in \cB}  \int_{\sphere}
\varphi(h_{Q}(u))\,d\mu_i(u)\bigg) &=&  \inf _{Q\in \cB}
\int_{\sphere} \varphi(h_{Q}(u))\,d\mu(u);\\
\lim_{i\rightarrow \infty} \bigg( \inf _{Q\in \cB}
\|h_Q\|_{\mu_i,\varphi}\bigg) &=& \inf _{Q\in \cB}
\|h_Q\|_{\mu,\varphi}.
\end{eqnarray*}
 \end{theorem}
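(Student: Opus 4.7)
The plan is to mirror the arguments of Theorem \ref{polardualorlicz-1-29-1} and Theorem \ref{continuous theorem-02-02}, but replacing the compactness step based on Lemma \ref{lemmaforinterior} by its $\vG$-counterpart Lemma \ref{lemmaforinterior-polar}(i). For the unnormalized version of part i), I start by noting that $\ball\in\cB$ and $\varphi(1)=1$, so
$$\inf_{Q\in\cB}\int_{\sphere}\varphi(h_{Q}(u))\,d\mu(u)\ \le\ \int_{\sphere}\varphi(h_{\ball}(u))\,d\mu(u)\ =\ \mu(\sphere)<\infty.$$
Pick a minimizing sequence $\{Q_i\}_{i\geq 1}\subset\cB$. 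Since the integrals $\int_{\sphere}\varphi(h_{Q_i})\,d\mu$ are uniformly bounded, Lemma \ref{bounded-for-convergence--1} forces $\{Q_i\}$ to be bounded in $\cKon$. Now Lemma \ref{lemmaforinterior-polar}(i) applies under the hypothesis $q\in(1-n,0)$ and yields a subsequence $Q_{i_j}\to M_1$ in the Hausdorff metric with $M_1\in\cB$. Uniform convergence of $h_{Q_{i_j}}\to h_{M_1}$ on $\sphere$ together with the continuity of $\varphi$ and the dominated convergence theorem give
$$\int_{\sphere}\varphi(h_{M_1}(u))\,d\mu(u)=\lim_{j\to\infty}\int_{\sphere}\varphi(h_{Q_{i_j}}(u))\,d\mu(u)=\inf_{Q\in\cB}\int_{\sphere}\varphi(h_{Q}(u))\,d\mu(u),$$
establishing (\ref{Problem-7-polar-2-2-8}). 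The norm version (\ref{Problem-7-polar-norm-2-2-8}) is verbatim analogous: one uses $\|h_{\ball}\|_{\mu,\varphi}=1$ to bound a minimizing sequence, invokes Lemma \ref{bounded-for-convergence--norm-2-2} for boundedness in $\cKon$, Lemma \ref{lemmaforinterior-polar}(i) to extract a subsequential limit $M_2\in\cB$, and Lemma \ref{convergneceofmeasure} (with $\mu_i\equiv\mu$) to identify $\|h_{M_2}\|_{\mu,\varphi}$ with the infimum.

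For part ii), I adopt the subsequence-of-subsequence pattern used in Theorem \ref{continuous theorem-1-1}. For each $i\in\N$, fix a minimizer $M_{1,i}\in\cB$ for $\mu_i$ (whose existence is given by part i)). Weak convergence $\mu_i\to\mu$ yields $\mu_i(\sphere)\to\mu(\sphere)$, hence
$$\sup_{i\geq 1}\int_{\sphere}\varphi(h_{M_{1,i}})\,d\mu_i\ \leq\ \sup_{i\geq 1}\mu_i(\sphere)\ <\ \infty,$$
so Lemma \ref{bounded-for-convergence--1} gives boundedness of $\{M_{1,i}\}$ in $\cKon$. Given any subsequence, Lemma \ref{lemmaforinterior-polar}(i) extracts a further subsequence $M_{1,i_{k_j}}\to M_0\in\cB$. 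The minimality against any fixed $Q\in\cB$, combined with Lemma \ref{uniformly converge-lemma} and the uniform convergence $\varphi(h_{M_{1,i_{k_j}}})\to\varphi(h_{M_0})$ on $\sphere$, yields
$$\int_{\sphere}\varphi(h_{M_0})\,d\mu\ =\ \lim_{j\to\infty}\int_{\sphere}\varphi(h_{M_{1,i_{k_j}}})\,d\mu_{i_{k_j}}\ \leq\ \lim_{j\to\infty}\int_{\sphere}\varphi(h_{Q})\,d\mu_{i_{k_j}}\ =\ \int_{\sphere}\varphi(h_{Q})\,d\mu,$$
so the limiting value equals $\inf_{Q\in\cB}\int_{\sphere}\varphi(h_Q)\,d\mu$. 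Since every subsequence has a further subsequence along which $\int\varphi(h_{M_{1,i_{k_j}}})\,d\mu_{i_{k_j}}$ converges to this common value, the full sequence converges, which is the first continuity claim. The norm statement follows by the same scheme, substituting Lemma \ref{bounded-for-convergence--norm-2-2} for Lemma \ref{bounded-for-convergence--1} and Lemma \ref{convergneceofmeasure} for Lemma \ref{uniformly converge-lemma}.

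The main obstacle, already handled by the authors in Lemma \ref{lemmaforinterior-polar}, is compensating for the absence of set-inclusion monotonicity of $\vG(\cdot)$: the Blaschke-selection argument that worked for $\dveV$ in Lemma \ref{lemmaforinterior} cannot be transplanted directly, which is why the hypothesis (\ref{conditionforG-polar}) and the isoperimetric inequality enter the picture. Because the present theorem asserts only existence and continuity of extreme values (not uniqueness of solutions), the convexity arguments that were used in Theorem \ref{polardualorlicz-1-29-1} to ``average'' two candidate minimizers are not needed, so we sidestep the difficulty that $\vG$ lacks a convenient rescaling lemma analogous to Proposition \ref{properties-G-hat}(iii). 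All remaining ingredients, such as the uniform convergence of support functions under Hausdorff convergence and the weak convergence statements, are available exactly as in Sections \ref{section-4} and \ref{section 5-1}.
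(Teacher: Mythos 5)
Your proof is correct and follows essentially the same route as the paper: bound a minimizing sequence via Lemma \ref{bounded-for-convergence--1} (or \ref{bounded-for-convergence--norm-2-2}), extract a subsequential limit in $\cB$ via Lemma \ref{lemmaforinterior-polar}, and pass to the limit in the objective functional using Lemma \ref{uniformly converge-lemma} (or \ref{convergneceofmeasure}); part ii) is the same subsequence-of-subsequence argument as in Theorems \ref{continuous theorem-1-1} and \ref{continuous theorem-02-02}. Your closing remark correctly identifies the key point, namely that Lemma \ref{lemmaforinterior-polar} (via the isoperimetric inequality) replaces the set-inclusion monotonicity of $\dveV$, and that no rescaling/convexity argument is needed since the theorem does not assert uniqueness.
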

 \begin{proof} i) Note that $\ball \in \cB$ and hence the optimization problem in (\ref{Problem-7-polar-2-2-8}) is well defined. Let $\{Q_i\}_{i\geq 1}$ be the limiting sequences such that $Q_i\in \cB$ for each $i\in \N$ and  \begin{eqnarray*}\mu(\sphere)\geq\inf _{Q\in \cB}  \int_{\sphere} \varphi(h_{Q}(u))\,d\mu(u)=
\lim_{i\rightarrow\infty}  \int_{\sphere} \varphi(h_{Q_i}(u))\,d\mu(u).
 \end{eqnarray*}  It follows from Lemma \ref{bounded-for-convergence--1} that $\{Q_i\}_{i\geq 1}$ is a bounded sequence in $\cKon$. Together with Lemma \ref{lemmaforinterior-polar}, there exist a subsequence $\{Q_{i_j}\}_{j\geq 1}$ of $\{Q_i\}_{i\geq 1}$ and  $M_1\in \cB$ such that $Q_{i_j}\rightarrow M_1.$  Lemma \ref{uniformly converge-lemma} and $\varphi\in \cI$ then yield $$\int_{\sphere} \varphi(h_{M_1}(u))\,d\mu(u)= \lim_{j\rightarrow\infty}  \int_{\sphere} \varphi(h_{Q_{i_j}}(u))\,d\mu(u)  = \inf _{Q\in \cB}  \int_{\sphere} \varphi(h_{Q}(u))\,d\mu(u).  $$

 The existence of $M_2\in \cB$ that verifies (\ref{Problem-7-polar-norm-2-2-8}) can be obtained similarly, with Lemma \ref{bounded-for-convergence--1}  and Lemma  \ref{uniformly converge-lemma}  replaced by Lemma \ref{bounded-for-convergence--norm-2-2} and Lemma \ref{convergneceofmeasure}, respectively, if one notices that  \begin{eqnarray*}1\geq \inf _{Q\in \cB} \|h_{Q}\|_{\mu,\varphi}=
\lim_{i\rightarrow\infty} \|h_{Q_i}\|_{\mu,\varphi}.
 \end{eqnarray*}
 ii) First, note that from Part i), the optimization problems (\ref{Problem-7-polar-2-2-8}) and (\ref{Problem-7-polar-norm-2-2-8}) for $\mu$ and $\mu_i$ for each $i\in\N$ have solutions. The rest of the proof is almost identical to those for Theorems  \ref{continuous theorem-1-1} and \ref{continuous theorem-02-02}, with  Lemma \ref{lemmaforinterior} replaced by Lemma \ref{lemmaforinterior-polar}. \end{proof}

  Similarly, one can prove the existence of solutions and the continuity of the extreme values to Problem \ref{general-polar-norm-02-02-02} for $\vGb(\cdot)$. The proof will be omitted due to the high similarity to those in e.g., Theorem \ref{polardualminkowskvG-2-2-8}.

 \begin{theorem}
  Let $\varphi\in\mathcal{I}$ and let $G\in \cG_I$ satisfy (\ref{conditionforG-polar}) for some constant $q\geq 1$.

\vskip 2mm \noindent i) Let $\mu$ be a nonzero finite Borel measure on $\sphere$  whose support is not
concentrated on any great hemisphere. There exist $\overline{M}_1, \overline{M}_2\in \cBb$  such that
 \begin{eqnarray*}
 \int_{\sphere}  \varphi(h_{\overline{M}_1}(u))d\mu(u)=\inf _{Q\in \cBb}  \int_{\sphere} \varphi(h_{Q}(u))\,d\mu(u)\ \ \mathrm{and} \ \
  \|h_{\overline{M}_2}\|_{\mu,\varphi}  = \inf _{Q\in \cBb}   \|h_Q\|_{\mu,\varphi}.
 \end{eqnarray*}
\noindent ii) Let $\{\mu_i\}_{i=1}^\infty$ and $\mu$ be nonzero finite Borel
measures on $\sphere$ whose supports are not concentrated
on any closed hemisphere, such that, $\mu_i\rightarrow \mu$ weakly as $i\rightarrow \infty$. Then
\begin{eqnarray*}
\lim_{i\rightarrow \infty} \bigg( \inf _{Q\in \cBb}  \int_{\sphere}
\varphi(h_{Q}(u))\,d\mu_i(u)\bigg) &=& \inf _{Q\in \cBb}
\int_{\sphere} \varphi(h_{Q}(u))\,d\mu(u));\\
\lim_{i\rightarrow \infty} \bigg( \inf _{Q\in \cBb}
\|h_Q\|_{\mu_i,\varphi}\bigg) &=& \inf _{Q\in \cBb}
\|h_Q\|_{\mu,\varphi}.
\end{eqnarray*}
 \end{theorem}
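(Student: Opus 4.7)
The plan is to mirror the proof of Theorem \ref{polardualminkowskvG-2-2-8} essentially verbatim, replacing throughout $\vG(\cdot)$ and $\cB$ by $\vGb(\cdot)$ and $\cBb$, and invoking part ii) of Lemma \ref{lemmaforinterior-polar} in place of part i). Since $\ball \in \cBb$, both infima in Part i) are well-defined and bounded above by $\mu(\sphere)$ and $1$ respectively, using $\varphi(1)=1$ and $\|1\|_{\mu,\varphi}=1$.

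For the existence of $\overline{M}_1$, I would take a minimizing sequence $\{Q_i\}_{i\geq 1}\subset \cBb$ for the functional $Q\mapsto \int_{\sphere}\varphi(h_Q)\,d\mu$, apply Lemma \ref{bounded-for-convergence--1} to deduce that $\{Q_i\}$ is bounded in $\cKon$, and then apply Lemma \ref{lemmaforinterior-polar} ii) to extract a subsequence converging to some $\overline{M}_1\in \cBb$. Lemma \ref{uniformly converge-lemma} together with the uniform convergence $h_{Q_{i_j}}\to h_{\overline{M}_1}$ on $\sphere$ then yields the minimality. The existence of $\overline{M}_2$ follows the same pattern, with Lemma \ref{bounded-for-convergence--1} and Lemma \ref{uniformly converge-lemma} replaced by Lemma \ref{bounded-for-convergence--norm-2-2} and Lemma \ref{convergneceofmeasure}, respectively.

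For the continuity claims in Part ii), I would argue via subsequences. Applying Part i) to each $\mu_i$ produces solutions $\widetilde{M}_i\in\cBb$ (for either functional); the weak convergence $\mu_i\to\mu$ and $\ball\in\cBb$ yield uniform upper bounds on the infima, hence on $\{\widetilde{M}_i\}$ via Lemma \ref{bounded-for-convergence--1} or Lemma \ref{bounded-for-convergence--norm-2-2}. Any subsequence then has a further subsequence converging, by Lemma \ref{lemmaforinterior-polar} ii), to some $\widetilde{M}_0\in\cBb$; a standard minimality comparison using Lemma \ref{uniformly converge-lemma} (respectively Lemma \ref{convergneceofmeasure}) shows that $\widetilde{M}_0$ solves the limit problem, and this gives the desired convergence of the infimal values along every subsequence, hence along the full sequence.

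The main obstacle is ensuring that the Blaschke limit of a sequence in $\cBb$ still belongs to $\cBb$. Unlike the $G\in\cG_I$ case of $\dveV(\cdot)$ (where strict monotonicity under set inclusion was used in Lemma \ref{lemmaforinterior}), the homogeneous general volume $\vGb(\cdot)$ lacks any such set-inclusion monotonicity. This obstacle is precisely what Lemma \ref{lemmaforinterior-polar} ii) resolves, by leveraging the classical isoperimetric inequality together with Jensen's inequality (whence the hypothesis $q\geq 1$); once that lemma is invoked, the remainder of the argument is a mechanical transcription of the proofs of Theorem \ref{polardualminkowskvG-2-2-8}, Theorem \ref{continuous theorem-1-1}, and Theorem \ref{continuous theorem-02-02}, so the details may reasonably be omitted.
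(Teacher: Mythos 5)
Your proposal is correct and matches the paper's approach exactly: the paper states that this theorem is proved ``similarly'' to Theorem \ref{polardualminkowskvG-2-2-8} and omits the details, and your outline is precisely that transcription, swapping $\vG(\cdot)$, $\cB$, and Lemma \ref{lemmaforinterior-polar} i) for $\vGb(\cdot)$, $\cBb$, and Lemma \ref{lemmaforinterior-polar} ii) (the latter being exactly where the hypotheses $G\in\cG_I$ and $q\ge 1$ are consumed). You also correctly identify the real content of the argument, namely that Lemma \ref{lemmaforinterior-polar} ii) closes the gap left by the failure of set-inclusion monotonicity for $\vGb(\cdot)$.
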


 \subsection{The general Orlicz-Petty bodies} \label{section 5-3}
The classical geominimal surface area \cite{Petty1974, Petty1985} and its $L_p$ or Orlicz extensions  (see e.g., \cite{Lutwak1996, Ye2015,Ye2015c, SHG2015, ZHY2016})  are central objects in convex geometry.  When studying the properties of various geominimal surface areas, the Petty body or its generalizations play fundamental roles. In short, the Orlicz-Petty bodies are the solutions to the following optimization problems \cite{SHG2015, ZHY2016}:  \begin{eqnarray}
&& \inf\Big\{nV_{\varphi}(K,L):  L\in\cKon
 \quad \mathrm{with}\quad V(L^\circ)=V(\ball)\Big\}  \label{1-1-1-1---2};\\
 && \inf\Big\{\widehat{V}_{\varphi}(K,L):  L\in\cKon
 \quad \mathrm{with}\quad V(L^\circ)=V(\ball)\Big\},\label{1-1-1-2}
 \end{eqnarray}
where $\varphi\in \cI$,  and  $V_{\varphi}(K, L)$ and $
\widehat{V}_{\varphi} (K, L)$ are the  Orlicz $L_{\varphi}$  mixed
volumes of $K, L\in \cKon$ defined by (see e.g., \cite{GHW2014, XJL,
ZHY2016}):
\begin{eqnarray*}\label{definition-mixed-volume-2016-07-11}
V_{\varphi}(K, L)= \frac{1}{n} \int_{S^{n-1}}
\varphi\bigg(\frac{h_L(u)}{h_K(u)}\bigg)h_K(u)dS_K(u) \ \
\mathrm{and} \ \ \widehat{V}_{\varphi} (K, L)=\bigg\|
\frac{h_L}{h_K}\bigg\|_{S_K, \varphi}. \end{eqnarray*} The surface
area measure $S_K$ may be replaced by other measures; for instance,
Luo, Ye and Zhu in \cite{LuoYeZhu} obtained the $p$-capacitary
Orlicz-Petty bodies where the surface area measure is replaced by
the $p$-capacitary measure (see e.g., \cite{Cole2015,
Jenrison1996}). As explained in \cite{LuoYeZhu}, the polar
Orlicz-Minkowski problem (i.e., Problems \ref{general-dual-polar}
and \ref{general-dual-polar-norm} with $G=t^n/n$) and the
optimization problems (\ref{1-1-1-1---2}) and (\ref{1-1-1-2}) are
quite different in their general forms; however these two problems
are also very closely related. In view of their relations, we can
ask the following problem aiming to find the general Orlicz-Petty
bodies.

\begin{problem}\label{general-polar-norm-02-02-02-petty}
Let $K\in \cKon$ be a fixed convex body. Let $\mu_K$ be a nonzero
finite Borel measure associated with $K$ defined on $\sphere$,
which is not concentrated on any closed hemisphere.  Under what
conditions on continuous functions $\varphi: (0, \infty)\rightarrow
(0, \infty)$ and $G: (0, \infty)\times \sphere \rightarrow (0,
\infty)$  can we find a convex body $M\in\cKon$ solving the
following optimization problems:
 \begin{eqnarray}\label{Problem-1-polar-norm-petty}
 \inf /\sup \left\{\bigg\| \frac{h_Q}{h_K}\bigg\|_{\mu_K, \varphi}:  Q\in \cA \right\}  \   \mathrm{or}\ \inf /\sup \left\{ \int_{\sphere} \varphi\bigg(\frac{h_Q(u)}{h_K(u)}\bigg)h_K(u)d\mu_K(u): Q\in \cA \right\},
 \end{eqnarray}
 where $\cA$ is selected from the following sets: $\cBt, \cBh, \cB$ and $\cBb$.
\end{problem}

 Note that the measure $\mu_K$  assumed in Problem \ref{general-polar-norm-02-02-02-petty} includes many interesting measures, such as, the surface area measure $S_K$, the $p$-capacitary measure \cite{Cole2015, Jenrison1996}, the Orlicz $p$-capacitary measure \cite{HongYeZhang-2017},   the $L_p$ dual curvature measures \cite{LYZActa, LYZ-Lp},  the general dual Orlicz curvature measures \cite{GHWXY, GHXY, XY2017-1, ZSY2017}, and many more.

\bd \label{orlicz-petty body} Let $K\in \cKon$ be a fixed convex
body. Let $\mu_K$ be a nonzero finite Borel measure associated with $K$ defined on
$\sphere$, which is not concentrated on any
closed hemisphere. If $M\in \cA$ solving the optimization problem
(\ref{Problem-1-polar-norm-petty}), then $M$ is called a general
Orlicz-Petty body of $K$ with respect to $\mu_K$. \ed

Recall that if $K\in \cKon$, there are two constants $0<r_K<R_K$ such that $r_K\ball\subset K\subset R_K\ball$. In view of this,
the existence, continuity and uniqueness, if applicable,  of the general Orlicz-Petty bodies  with respect to $\mu_K$ can be obtained (almost identically) as in Sections \ref{section-4}, \ref{section 5-1} and \ref{section 5-2}. Polytopal solutions and counterexamples as  in Proposition \ref{denial of other possibilities}, when $K$ is a polytope, can be also established accordingly, if applicable.

\vskip 2mm \noindent {\bf Acknowledgments.}
DY is supported by a NSERC grant. BZ is supported by NSFC (No.\ 11501185) and the Doctor
Starting Foundation of Hubei University for Nationalities (No.\ MY2014B001).

\vskip 2mm \noindent Sudan Xing, \ \ \ {\small \tt sudanxing@gmail.com}\\
{ \em Department of Mathematics and Statistics,   Memorial University of Newfoundland,
    St.\ John's, Newfoundland, Canada A1C 5S7 }

\vskip 2mm \noindent Deping Ye, \ \ \ {\small \tt deping.ye@mun.ca}\\
{ \em Department of Mathematics and Statistics,
    Memorial University of Newfoundland,
    St.\ John's, Newfoundland, Canada A1C 5S7 }

\vskip 2mm \noindent Baocheng Zhu, \ \ \ {\small \tt zhubaocheng814@163.com}\\
{ \em  Department of Mathematics,
    Hubei Minzu University,
    Enshi, Hubei, China 445000}

\end{document}